
\documentclass[11pt]{article}
\usepackage{amsfonts,amscd,amssymb, amsmath}
\usepackage[usenames]{color}

\setcounter{MaxMatrixCols}{10}

\usepackage{graphicx}

\usepackage{euscript}
\usepackage[all,tips]{xy}
\usepackage{epic,eepic}\setlength{\unitlength}{.4mm}

\usepackage{color}
\usepackage{tikz}

\setlength{\textheight}{8.9in}
\setlength{\textwidth}{6.3in}
\setlength{\headheight}{12pt}
\setlength{\headsep}{25pt} \setlength{\footskip}{25pt}
\setlength{\oddsidemargin}{0.10in}
\setlength{\evensidemargin}{0.10in}
\setlength{\marginparwidth}{0.08in}
\setlength{\marginparsep}{0.001in}
\setlength{\marginparpush}{0.4\parindent}
\setlength{\topmargin}{-0.54cm}
\setlength{\columnsep}{10pt} \setlength{\columnseprule}{0pt}
\setlength{\parindent}{15pt}
\newtheorem{definition}{Definition}[section]

\newtheorem{proposition}[definition]{Proposition}
\newtheorem{corollary}[definition]{Corollary}
\newtheorem{remark}[definition]{Remark}

\newtheorem{theorem}[definition]{Theorem}
\newtheorem{example}[definition]{Example}

\def\rawo\lonra{\longrightarrow}

\def\ot{\otimes}

\allowdisplaybreaks[4]
\newenvironment{proof}{{\it Proof.}}{\hfill $ \square $ \vskip 4mm}

\newcommand{\BiHomMod}{\mathbf{BiHomMod}}
\newcommand{\BiHomNonAs}{\mathbf{BiHomNonAs}}
\newcommand{\BiHomdidend}{\mathbf{BiHomDend}}

\newcommand{\BiHomtridend}{\mathbf{BiHomTridend}}
\newcommand{\BiHomRB}{\mathbf{BiHomRB}}

\newcommand{\cat}[1]{{\EuScript #1}}

\newcommand{\cO}{\cat{O}}

\newcommand{\cF}{\cat{F}}

\newcommand{\bD}{\mathbf{D}}













\begin{document}

\title{Rota-Baxter operators on BiHom-associative algebras 
and related structures
}
\author{Ling Liu \\
College of Mathematics, Physics and Information Engineering,\\
Zhejiang Normal University, \\
Jinhua 321004, China \\
e-mail: ntliulin@zjnu.cn \and Abdenacer Makhlouf \\
Universit\'{e} de Haute Alsace, \\
Laboratoire de Math\'{e}matiques, Informatique et Applications, \\
4, rue des fr\`{e}res Lumi\`{e}re, F-68093 Mulhouse, France\\
e-mail: Abdenacer.Makhlouf@uha.fr \and Claudia Menini \\
University of Ferrara,\\
Department of Mathematics, \\
Via Machiavelli 35, Ferrara, I-44121, Italy \\
e-mail: men@unife.it \and Florin Panaite \\
Institute of Mathematics of the Romanian Academy,\\
PO-Box 1-764, RO-014700 Bucharest, Romania\\
e-mail: florin.panaite@imar.ro }
\maketitle

\begin{abstract}
The purpose of this paper is to study Rota-Baxter operators for BiHom-associative algebras. 
Moreover, we introduce and discuss the properties of the notions of 
BiHom-(tri)dendriform algebra, BiHom-Zinbiel algebra and BiHom-quadri-algebra. We construct the free Rota-Baxter BiHom-associative algebra 
and present some observations about categories and functors related to Rota-Baxter structures.\\
\textbf{Keywords}: Rota-Baxter operator; BiHom-associative algebra; BiHom-dendriform algebra
\end{abstract}

\section*{Introduction}
The first instance of Hom-type algebras appeared in the Physics literature when looking for quantum deformations 
of some algebras of vector fields, like Witt and Virasoro algebras, in connection with oscillator algebras
(\cite{AizawaSaito,Hu}).  
A quantum deformation consists of replacing the usual derivation by a $\sigma$-derivation. 
It turns out that the algebras obtained in this way no longer satisfy the Jacobi identity but they satisfy a 
modified version involving a homomorphism. These algebras
were called Hom-Lie algebras and studied by Hartwig, Larsson and Silvestrov in \cite{JDS,DS}.
The Hom-associative algebras play the role of associative algebras in the Hom-Lie
setting. They were introduced  in \cite{ms}, where it is shown
that the commutator bracket defined by the multiplication in a Hom-associative
algebra leads naturally to a Hom-Lie algebra. The adjoint functor from the category of Hom-Lie algebras to the category of Hom-associative algebras  and the enveloping algebra were constructed in \cite{YauEnv}. A categorical approach to Hom-type algebras was considered  in \cite{stef}.  A generalization  has been given in \cite{gmmp}, where a construction of a Hom-category including a group action led to concepts of BiHom-type algebras. Hence, BiHom-associative algebras and BiHom-Lie algebras, involving two linear maps (called structure maps), were introduced. The main axioms for these types of algebras 
(BiHom-associativity, BiHom-skew-symmetry and BiHom-Jacobi condition) were dictated by categorical considerations.  

Rota-Baxter operators   first appeared in G. Baxter's work in probability and the study of fluctuation theory (\cite{Baxter}).  Afterwards, Rota-Baxter algebras were intensively studied by G. C. Rota (\cite{Rota,Rota2}) in connection with combinatorics. 
Rota-Baxter operators have appeared in a wide range of areas in pure and applied mathematics, for example under the name  \emph{multiplicativity constraint}   in the work of A. Connes and D. Kreimer (\cite{Connes-Kreimer}) about their Hopf algebra approach to renormalization  of quantum field theory. This seminal work was followed by an important development of the theory of 
Rota-Baxter algebras and their connections to other algebraic structures, see for example   \cite{agui,Bai O-operators,Bai O-operators2,KEF1,KEF-Guo1,KEF-Guo_2007,KEF-Manchon2009,
KEF-Bondia-Patras,Guo}. 
In the context of Lie algebras, Rota-Baxter operators were introduced
independently by Belavin and Drinfeld (\cite{Drinfeld}) and Semenov-Tian-Shansky (\cite{semenov}), in connection with solutions of the (modified) classical Yang-Baxter equation. Motivated by $K$-theory, Loday (\cite{loday}) introduced 
 dendriform algebras, that dichotomize an associative multiplication. It turns out that  dendriform algebras are connected to several areas in mathematics and physics. Moreover,  Rota-Baxter algebras are related to dendriform algebras via a pair of adjoint functors (\cite{KEF1,KEF-Guo2}).  

Rota-Baxter operators on associative or Lie algebras have been intensively studied. Recently, Rota-Baxter 
operators on other types of algebras started to be investigated. For instance, Rota-Baxter operators on flexible, alternative, 
Leibniz and Malcev algebras have been 
studied   in \cite{RF} and \cite{madariaga}, while Rota-Baxter operators on Hom-type algebras have been studied in \cite{makhloufrotabaxter,makhloufyau}. We begin here the study of Rota-Baxter operators on BiHom-associative algebras. 
The main question we wanted to answer is the following. Let $(A, \cdot , \alpha , \beta )$ be a BiHom-associative algebra,  $R:A\rightarrow A$ a Rota-Baxter operator of weight $\lambda $ and 
define a new multiplication on $A$ by $a*b=R(a)\cdot b+a\cdot R(b)+\lambda a\cdot b$, for all $a, b\in A$; then under 
what circumstances is $(A, *, \alpha , \beta )$ a BiHom-associative algebra? As we will see, a sufficient condition is that $R$ commutes with $\alpha $ and $\beta $. 
However, along the way, we will make a detailed study of various structures that may be related to Rota-Baxter 
operators on BiHom-associative algebras. 

The paper is organized as follows. 
In Section \ref{sec1},  we recall the definitions of Rota-Baxter operators, different types of  algebras (dendriform, Zinbiel,  tridendriform, quadri-algebra) and the notion of BiHom-associative algebra. 
Moreover, we describe the construction called the Yau twist. In Section \ref{sec2}, we introduce the notions of 
BiHom-(tri)dendriform algebra, BiHom-Zinbiel algebra and BiHom-quadri-algebra. We discuss their properties,  Yau twists and some 
basic constructions. 
In Section \ref{sec4}, we deal with Rota-Baxter structures for BiHom-associative algebras. We provide several concrete examples 
(obtained by a computer algebra system) of Rota-Baxter operators on some BiHom-associative algebras, 
then we discuss some general properties and describe some key constructions. We construct, in Section \ref{sec6}, the free Rota-Baxter BiHom-associative algebra and give some observations about categories and functors related to Rota-Baxter structures.  Finally, a connection between Rota-Baxter operators on 
BiHom-associative algebras 
and so-called weak BiHom-pseudotwistors  is provided in Section \ref{sec8}. 
\section{Preliminaries}\label{sec1}
\setcounter{equation}{0} 

We work over a base field $\Bbbk $. All
algebras, linear spaces etc. will be over $\Bbbk $; unadorned $\otimes $
means $\otimes_{\Bbbk}$. We denote by $_{\Bbbk }\mathcal{M}$ the category of linear spaces over $\Bbbk $. 
Unless otherwise specified, the
algebras (associative or not) that will appear in what follows are
\emph{not} supposed to be unital, and a multiplication $\mu :A\otimes
A\rightarrow A$ on a linear space $A$ is denoted by juxtaposition: $\mu
(v\otimes v^{\prime })=vv^{\prime }$. For the composition of two maps $f$
and $g$, we write either $g\circ f$ or simply $gf$. For the identity
map on a linear space $A$ we use the notation $id_A$.
\begin{definition}\label{RB-DEF}
Let $A$ be a linear space and $\mu :A\ot A\rightarrow A$, $\mu (x\ot y)=xy$, for all $x, y\in A$, a
linear multiplication on $A$ and let $\lambda \in \Bbbk $. A Rota-Baxter operator of weight $\lambda $
for $(A, \mu )$ is a linear map $R:A\rightarrow A$ satisfying the so-called Rota-Baxter condition
\begin{eqnarray}
&&R(x)R(y)=R(R(x)y+xR(y)+\lambda xy), \;\;\;\forall \; x, y\in A. \label{RBrel}
\end{eqnarray}
\end{definition}

In this case, if we define on $A$ a new multiplication by $x*y=xR(y)+R(x)y+\lambda xy$,
for all $x, y\in A$, then
$R(x*y)=R(x)R(y)$, for all $x, y\in A$, and $R$ is a Rota-Baxter operator of weight $\lambda $ for
$(A, *)$. If $(A, \mu )$ is associative then $(A, *)$ is also associative.
\begin{definition} (\cite{loday}) A dendriform algebra is a triple $(A, \prec , \succ )$
consisting of a linear space $A$ and two linear operations $\prec , \succ :A\otimes A\rightarrow A$ satisfying
the conditions
(for all $x, y, z\in A$):
\begin{eqnarray}
&&(x\prec y)\prec z=x\prec (y\prec z+y\succ z),  \label{dend1} \\
&&(x \succ y)\prec z=x\succ (y\prec z), \label{dend2} \\
&&x\succ (y\succ z)=(x\prec y+x\succ y)\succ z. \label{dend3}
\end{eqnarray}

A morphism $f:(A, \prec , \succ )\rightarrow (A', \prec ', \succ ')$ of dendriform algebras is a linear map
$f:A\rightarrow A'$ satisfying $f(x\prec y)=f(x)\prec ' f(y)$ and $f(x\succ y)=f(x)\succ ' f(y)$, for all $x, y\in A$. 

A dendriform algebra $(A, \prec , \succ )$ is called commutative if $x\prec y=y\succ x$, for all $x, y\in A$. 
\end{definition}
\begin{definition} (\cite{loday})
A (right) Zinbiel algebra is an algebra $(A, \mu )$ for which
\begin{eqnarray}
&&(xy)z=x(yz)+x(zy), \;\;\; \forall \;x, y, z\in A. \label{zinb}
\end{eqnarray}
\end{definition}

By \cite{loday}, Zinbiel algebras are exactly commutative dendriform algebras, in the following sense. If $(A, \mu )$ 
is a Zinbiel algebra and we define $x\prec y=xy$ and $x\succ y=yx$, for $x, y\in A$, then $(A, \prec , \succ )$ 
is a commutative dendriform algebra. Conversely, if $(A, \prec , \succ )$ is a commutative dendriform algebra and 
we define $\mu (x\otimes y)=x\prec y$, for $x, y\in A$, then $(A, \mu )$ is a Zinbiel algebra. 
\begin{definition} (\cite{lodayronco}) A tridendriform algebra is a 4-tuple
$(A, \prec , \succ , \cdot )$, where $A$ is a linear space and $\prec , \succ , \cdot :A\otimes A\rightarrow A$ are
linear operations satisfying the conditions
(for all $x, y, z\in A$):
\begin{eqnarray}
&&(x\prec y)\prec z=x\prec (y\prec z+y\succ z+ y\cdot z),  \label{tridend1} \\
&&(x \succ y)\prec z=x\succ (y\prec z), \label{tridend2} \\
&&x\succ (y\succ z)=(x\prec y+x\succ y+x\cdot y)\succ z, \label{tridend3} \\
&&x\cdot (y\succ z)=(x\prec y)\cdot z, \label{tridend4} \\
&&x\succ (y\cdot z)=(x\succ y)\cdot z, \label{tridend5} \\
&&x\cdot (y\prec z)=(x\cdot y)\prec z, \label{tridend6} \\
&&x\cdot (y\cdot z)=(x\cdot y)\cdot z. \label{tridend7}
\end{eqnarray}

A morphism $f:(A, \prec , \succ , \cdot )\rightarrow (A', \prec ', \succ ', \cdot ')$ of tridendriform algebras is a linear map
$f:A\rightarrow A'$ satisfying $f(x\prec y)=f(x)\prec ' f(y)$, $f(x\succ y)=f(x)\succ ' f(y)$ and
$f(x\cdot y)=f(x)\cdot ' f(y)$,
for all $x, y\in A$.
\end{definition}

\begin{definition}(\cite{agui}) A quadri-algebra is a 5-tuple $(Q, \nwarrow, \swarrow, \nearrow, \searrow)$
consisting of a linear space $Q$ and four  linear maps $\nwarrow,
\swarrow, \nearrow, \searrow: Q\otimes Q\rightarrow Q$ satisfying
the axioms below (\ref{1.19})-(\ref{1.23}) (for all $x, y, z\in Q$).
In order to state them, consider the following operations:
\begin{eqnarray}
&& x\succ y:=x\nearrow y+ x\searrow y    \label{qua1} \\
&& x\prec y:=x\nwarrow y+ x\swarrow y, \label{qua2} \\
&& x\vee y:=x\searrow y+ x\swarrow y, \label{qua3} \\
&& x\wedge y:=x\nearrow y+ x\nwarrow y, \label{qua4}
\end{eqnarray}
\begin{eqnarray}
 x\ast y:&=&x\searrow y+ x\nearrow y+ x\swarrow y+ x\nwarrow y \nonumber \\
 &=& x\succ y+ x\prec y= x\vee y+ x\wedge y. \label{qua5}
\end{eqnarray}
The axioms are
\begin{eqnarray}
&& (x\nwarrow y)\nwarrow z=x \nwarrow (y\ast z), ~ (x\nearrow
y)\nwarrow z=x\nearrow (y\prec
z), \label{1.19}\\
&&(x\wedge y)\nearrow z=x\nearrow (y\succ z), ~ 
(x\swarrow y)\nwarrow z=x\swarrow (y\wedge z), \label{1.20}\\
&&(x\searrow
y)\nwarrow z=x\searrow (y\nwarrow z), ~ (x\vee
y)\nearrow z=x\searrow (y\nearrow z), \label{1.21}\\
&& (x\prec y)\swarrow z=x\swarrow (y\vee z),~ (x\succ y)\swarrow
z=x\searrow (y\swarrow z), \label{1.22} \\
&&(x\ast y)\searrow z=x\searrow (y\searrow
z). \label{1.23}
\end{eqnarray}

A morphism $f:(Q, \nwarrow, \swarrow, \nearrow, \searrow)\rightarrow
(Q', \nwarrow', \swarrow', \nearrow', \searrow')$ of quadri-algebras
is a linear map $f:Q\rightarrow Q'$ such that 
$f(x\nearrow y)=f(x)\nearrow' f(y), f(x\searrow
y)=f(x)\searrow' f(y), f(x\nwarrow y)=f(x)\nwarrow' f(y)$ and
$f(x\swarrow y)=f(x)\swarrow ' f(y)$, for all $x, y\in Q$. As a consequence, we also have 
$f(x\succ y)=f(x)\succ ' f(y)$, $f(x\prec y)=f(x)\prec ' f(y)$, $f(x\vee y)=f(x)\vee ' f(y)$, 
$f(x\wedge y)=f(x)\wedge ' f(y)$ and $f(x\ast y)=f(x)\ast ' f(y)$, for all $x, y\in Q$. 
\end{definition}
\begin{definition} (\cite{gmmp}) 
A BiHom-associative algebra over $\Bbbk $%
\textbf{\ }is a 4-tuple $\left( A,\mu ,\alpha ,\beta \right) $, where $A$ is
a $\Bbbk $-linear space, $\alpha :A\rightarrow A$, $\beta :A\rightarrow A$
and $\mu :A\otimes A\rightarrow A$ are linear maps, with notation $\mu (x\otimes y) =xy$, for all $x, y\in A$, satisfying the following
conditions, for all $x, y, z\in A$:
\begin{gather}
\alpha \circ \beta =\beta \circ \alpha , \\
\alpha (xy) =\alpha (x)\alpha (y) \text{ and }\beta (xy)=\beta (x)\beta (y) ,\quad \text{%
(multiplicativity)}  \label{eqalfabeta} \\
\alpha (x)(yz)=(xy)\beta (z).\quad \text{%
(BiHom-associativity)}  \label{eqasso}
\end{gather}

We call $\alpha $ and $\beta $ (in this order) the structure maps
of $A$.

A morphism $f:(A, \mu _A , \alpha _A, \beta _A)\rightarrow (B, \mu _B ,
\alpha _B, \beta _B)$ of BiHom-associative algebras is a linear map $%
f:A\rightarrow B$ such that $\alpha _B\circ f=f\circ \alpha _A$, $\beta
_B\circ f=f\circ \beta _A$ and $f\circ \mu_A=\mu _B\circ (f\otimes f)$.
\end{definition}

If $(A, \mu )$ is an associative algebra, where $\mu :A\otimes A\rightarrow A$ is the multiplication,  and $\alpha , \beta :A\rightarrow A$ are commuting algebra endomorphisms, then $\left( A,\mu
\circ \left( \alpha \otimes \beta \right) ,\alpha ,\beta \right) $ is a
BiHom-associative algebra, denoted by $A_{(\alpha ,\beta )}$ and called the
Yau twist of $(A, \mu )$.

More generally, let $(D, \mu , \tilde{\alpha }, \tilde{\beta })$ be a
BiHom-associative algebra and $\alpha , \beta :D\rightarrow D$ two
multiplicative linear maps such that any two of the maps $\tilde{\alpha },
\tilde{\beta }, \alpha , \beta $ commute. Then $(D, \mu \circ (\alpha
\otimes \beta ), \tilde{\alpha }\circ \alpha , \tilde{\beta }\circ \beta )$
is also a BiHom-associative algebra, denoted by $D_{(\alpha , \beta )}$.
\begin{definition} 
Let $(A, \mu _A , \alpha _A, \beta _A)$ be a BiHom-associative algebra and $(M, \alpha _M, \beta _M)$ a triple where $M$
is a linear space and $\alpha _M, \beta _M:M \rightarrow M$ are commuting linear maps.\\
(i) $(M, \alpha _M, \beta _M)$ is a left $A$-module if 
we have a linear map $A\otimes M\rightarrow M$, $a\otimes m\mapsto a\cdot m$, such that
$\alpha _M(a\cdot m)=\alpha _A(a)\cdot \alpha _M(m)$,
$\beta _M(a\cdot m)=\beta _A(a)\cdot \beta _M(m)$ and
\begin{eqnarray}
&&\alpha _A(a)\cdot (a^{\prime }\cdot m)=(a\cdot a^{\prime })\cdot \beta _M(m),\;\;\;\;\forall \;\;a, a'\in A, \;m\in M.
\label{lmod4}
\end{eqnarray}
(ii) $(M, \alpha _M, \beta _M)$ is a right $A$-module if 
we have a linear map $M\otimes A\rightarrow M$, $m\otimes a\mapsto m\cdot a$, such that
$\alpha _M(m\cdot a)=\alpha _M(m)\cdot \alpha _A(a)$,
$\beta _M(m\cdot a)=\beta _M(m)\cdot \beta _A(a)$ and
\begin{eqnarray}
&&\alpha _M(m)\cdot (a\cdot a')=(m\cdot a)\cdot \beta _A(a'), \;\;\;\;\forall \;\;a, a'\in A, \;m\in M.
\label{rmod4}
\end{eqnarray}
(iii) If $(M, \alpha _M, \beta _M)$ is 
a left and right $A$-module, 
then $M$ is called an $A$-bimodule if 
\begin{eqnarray}
&&\alpha _A(a)\cdot (m\cdot a')=(a\cdot m)\cdot \beta _A(a'), \;\;\;\;\forall \;\;a, a'\in A, \;m\in M. \label{BHbim}
\end{eqnarray}
\end{definition}

If $(A, \mu , \alpha , \beta )$ is a BiHom-associative algebra, then $(A,
\alpha , \beta )$ is an $A$-bimodule, with actions defined by $a\cdot m=am$ and $m\cdot a=ma$,
for all $a, m\in A$.

Similarly to the classical (associative) case and to the Hom-case proved in \cite{makpan}, 
one can characterize bimodules in terms of split null extensions. 
\section{BiHom-(tri)dendriform algebras and BiHom-quadri-algebras}\label{sec2}
\setcounter{equation}{0}
In this section, we introduce the notions of BiHom-dendriform algebra, BiHom-tridendriform algebra and BiHom-quadri-algebra, generalizing the Hom-type structures given in \cite{an,makhloufrotabaxter,makhloufyau}. Moreover, we provide some key constructions.
\begin{definition} A BiHom-dendriform algebra is a 5-tuple $(A, \prec , \succ , \alpha , \beta )$
consisting of a linear space $A$ and linear maps $\prec , \succ :A\otimes A\rightarrow A$ and
$\alpha , \beta :A\rightarrow A$
satisfying
the conditions
\begin{eqnarray}
&&\alpha \circ \beta =\beta \circ \alpha , \label{BiHomdend1} \\
&&\alpha (x\prec y)=\alpha (x)\prec \alpha (y), ~ 
\alpha (x\succ y)=\alpha (x)\succ \alpha (y), \label{BiHomdend3} \\
&&\beta (x\prec y)=\beta (x)\prec \beta (y), ~
\beta (x\succ y)=\beta (x)\succ \beta (y), \label{BiHomdend5} \\
&&(x\prec y)\prec \beta (z)=\alpha (x)\prec (y\prec z+y\succ z),  \label{BiHomdend6} \\
&&(x \succ y)\prec \beta (z)=\alpha (x)\succ (y\prec z), \label{BiHomdend7} \\
&&\alpha (x)\succ (y\succ z)=(x\prec y+x\succ y)\succ \beta (z),  \label{BiHomdend8}
\end{eqnarray}
for all $x, y, z\in A$. 
We call $\alpha $ and $\beta $ (in this order) the structure maps
of $A$.

A morphism $f:(A, \prec , \succ , \alpha , \beta )\rightarrow (A', \prec ', \succ ', \alpha ', \beta ')$ of
BiHom-dendriform algebras is a linear map
$f:A\rightarrow A'$ satisfying $f(x\prec y)=f(x)\prec ' f(y)$ and $f(x\succ y)=f(x)\succ ' f(y)$, for all $x, y\in A$,
as well as $f\circ \alpha =\alpha '\circ f$ and $f\circ \beta =\beta '\circ f$.

A BiHom-dendriform algebra $(A, \prec , \succ , \alpha , \beta )$ is called commutative if 
\begin{eqnarray}
\beta (x)\prec \alpha (y)=\beta (y)\succ \alpha (x),\;\;\; \forall \; x, y\in A. \label{comBHdendri}
\end{eqnarray}
\end{definition}
\begin{proposition} \label{Yaudend}
Let $(A, \prec , \succ )$ be a dendriform algebra and $\alpha , \beta :A\rightarrow A$ two
commuting dendriform algebra endomorphisms. Define $\prec _{(\alpha , \beta )}, \succ _{(\alpha , \beta )}:
A\ot A\rightarrow A$ by
\begin{eqnarray*}
&&x\prec _{(\alpha , \beta )}y=\alpha (x)\prec \beta (y) \;\;\;\;\;and\;\;\;\;\;
x\succ _{(\alpha , \beta )}y=\alpha (x)\succ \beta (y),
\end{eqnarray*}
for all $x, y\in A$. Then $A_{(\alpha , \beta )}:=(A, \prec _{(\alpha , \beta )}, \succ _{(\alpha , \beta )},
\alpha , \beta )$ is a BiHom-dendriform algebra, called the Yau twist of $A$. Moreover, assume that
$(A', \prec ', \succ ')$ is another dendriform algebra and $\alpha ', \beta ':A'\rightarrow A'$ are
two commuting dendriform algebra endomorphisms and $f:A\rightarrow A'$ is a morphism of
dendriform algebras satisfying $f\circ \alpha =\alpha '\circ f$ and $f\circ \beta =\beta '\circ f$. Then
$f:A_{(\alpha , \beta )}\rightarrow A'_{(\alpha ', \beta ')}$ is a morphism of BiHom-dendriform algebras.
\end{proposition}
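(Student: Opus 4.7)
The plan is to verify, one by one, the axioms \eqref{BiHomdend1}--\eqref{BiHomdend8} of a BiHom-dendriform algebra for the structure $(A, \prec_{(\alpha,\beta)}, \succ_{(\alpha,\beta)}, \alpha, \beta)$, using only the dendriform axioms \eqref{dend1}--\eqref{dend3} for $(A,\prec,\succ)$, the fact that $\alpha$ and $\beta$ preserve $\prec$ and $\succ$, and the hypothesis $\alpha\circ\beta=\beta\circ\alpha$. Condition \eqref{BiHomdend1} is given as a hypothesis. For the multiplicativity conditions \eqref{BiHomdend3} and \eqref{BiHomdend5}, I would compute directly, e.g.
\[
\alpha(x\prec_{(\alpha,\beta)} y)=\alpha(\alpha(x)\prec \beta(y))=\alpha^2(x)\prec \alpha\beta(y),
\]
and
\[
\alpha(x)\prec_{(\alpha,\beta)}\alpha(y)=\alpha^2(x)\prec \beta\alpha(y),
\]
which coincide since $\alpha\beta=\beta\alpha$; the other three formulas are strictly analogous.

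For the three BiHom-dendriform identities \eqref{BiHomdend6}--\eqref{BiHomdend8}, the strategy is to unfold the twisted operations, push all applications of $\alpha$ and $\beta$ inside using multiplicativity, then invoke the corresponding dendriform identity. For instance, to prove \eqref{BiHomdend6}, I would first write
\[
(x\prec_{(\alpha,\beta)}y)\prec_{(\alpha,\beta)}\beta(z)
=\alpha(\alpha(x)\prec\beta(y))\prec\beta^2(z)
=(\alpha^2(x)\prec\alpha\beta(y))\prec\beta^2(z),
\]
and then apply \eqref{dend1} with arguments $\alpha^2(x)$, $\alpha\beta(y)$, $\beta^2(z)$ to obtain
\[
\alpha^2(x)\prec\bigl(\alpha\beta(y)\prec\beta^2(z)+\alpha\beta(y)\succ\beta^2(z)\bigr).
\]
On the other side, a similar unfolding of $\alpha(x)\prec_{(\alpha,\beta)}(y\prec_{(\alpha,\beta)}z+y\succ_{(\alpha,\beta)}z)$, together with $\beta\alpha=\alpha\beta$, produces the same expression. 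Conditions \eqref{BiHomdend7} and \eqref{BiHomdend8} are checked by the same mechanism, relying on \eqref{dend2} and \eqref{dend3} respectively; the pattern is that the twisted axiom at level $(x,y,z)$ reduces to the corresponding dendriform axiom at level $(\alpha^2(x),\alpha\beta(y),\beta^2(z))$.

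For the morphism statement, I would simply compute
\[
f(x\prec_{(\alpha,\beta)}y)=f(\alpha(x)\prec\beta(y))=f\alpha(x)\prec' f\beta(y)=\alpha' f(x)\prec'\beta' f(y)=f(x)\prec'_{(\alpha',\beta')}f(y),
\]
and likewise for $\succ$; the compatibility of $f$ with $\alpha,\beta$ is part of the hypothesis, so nothing more is required.

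There is no real obstacle here: the whole argument is a bookkeeping exercise, and the only subtle point to watch is that in each verification the two endomorphisms must appear in the correct order ($\alpha^2$ on the leftmost slot, $\alpha\beta$ on the middle one, $\beta^2$ on the rightmost), which is forced precisely because $\alpha$ is applied to the left factor and $\beta$ to the right factor in the twisted operations. The commutation $\alpha\beta=\beta\alpha$ is used exactly to align the two sides of each identity.
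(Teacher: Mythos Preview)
Your proposal is correct and follows essentially the same approach as the paper's proof: the paper only writes out the verification of \eqref{BiHomdend6}, unfolding both sides to expressions in $\alpha^2(x),\alpha\beta(y),\beta^2(z)$ and then invoking \eqref{dend1}, exactly as you do, and leaves the remaining checks to the reader. Your treatment is in fact slightly more detailed, since you also spell out the multiplicativity conditions and the morphism statement.
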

\begin{proof}
We only prove (\ref{BiHomdend6}) and leave the rest to the reader. By using the
formulae for $\prec _{(\alpha , \beta )}$ and $\succ _{(\alpha , \beta )}$ and the fact that $\alpha $ and
$\beta $ are commuting dendriform algebra endomorphisms, one can compute, for all $x, y, z\in A$:
\begin{eqnarray*}
&&(x\prec _{(\alpha , \beta )}y)\prec _{(\alpha , \beta )}\beta (z)=(\alpha ^2(x)\prec \alpha \beta (y))\prec \beta ^2(z), \\
&&\alpha (x)\prec _{(\alpha , \beta )}(y\prec _{(\alpha , \beta )}z)=\alpha ^2(x)\prec (\alpha \beta (y)\prec \beta ^2(z)), \\
&&\alpha (x)\prec _{(\alpha , \beta )}(y\succ _{(\alpha , \beta )}z)=\alpha ^2(x)\prec (\alpha \beta (y)\succ \beta ^2(z)). 
\end{eqnarray*}
Thus, (\ref{BiHomdend6}) follows from (\ref{dend1}) applied to the elements
$\alpha ^2(x)$, $\alpha \beta (y)$, $\beta ^2(z)$.
\end{proof}
\begin{remark}
More generally,
let $(A, \prec , \succ , \alpha , \beta )$ be a BiHom-dendriform algebra and $\tilde{\alpha }, \tilde{\beta }:
A\rightarrow A$ two morphisms of BiHom-dendriform algebras such that any two of the maps $\alpha, \beta ,
\tilde{\alpha }, \tilde{\beta }$ commute. Define new multiplications on $A$ by
$x\prec 'y=\tilde{\alpha }(x)\prec \tilde{\beta }(y)$ and 
$x\succ 'y=\tilde{\alpha }(x)\succ \tilde{\beta }(y)$,
for all $x, y\in A$. Then one can prove that $(A, \prec ', \succ ', \alpha \circ \tilde{\alpha }, \beta \circ \tilde{\beta })$
is a BiHom-dendriform algebra.
\end{remark}

We introduce now the BiHom-analogue of Zinbiel algebras. 
\begin{definition}
A BiHom-Zinbiel algebra is a 4-tuple $(A, \cdot , \alpha , \beta )$ where $(A, \cdot )$ is an algebra, $\alpha , \beta :
A\rightarrow A$ are commuting algebra maps and the following relations are satisfied, for all $x, y, z\in A$:
\begin{eqnarray}
&&(x\cdot \beta (y))\cdot \alpha \beta (z)=\alpha (x)\cdot (\beta (y)\cdot \alpha (z))+\alpha (x)\cdot (\beta (z)\cdot 
\alpha (y)), \label{BHzinb1} \\
&&(\beta (y)\cdot \alpha (x))\cdot \alpha \beta (z)=(\beta (y)\cdot \beta (z))\cdot \alpha ^2(x). \label{BHzinb2}
\end{eqnarray}
\end{definition}

We can obtain examples of BiHom-Zinbiel algebras by twisting Zinbiel algebras:  
\begin{proposition}
Let $(A, \mu )$ be a Zinbiel algebra and $\alpha , \beta :A\rightarrow A$ two commuting algebra maps. Define a new 
multiplication on $A$ by $x\cdot y=\alpha (x)\beta (y)$, for all $x, y\in A$. Then 
$(A, \cdot , \alpha , \beta )$ is a BiHom-Zinbiel algebra, called the Yau twist of $A$. 
\end{proposition}
\begin{proof}
For $x, y, z\in A$ we compute: 
\begin{eqnarray*}
(x\cdot \beta (y))\cdot \alpha \beta (z)&=&(\alpha ^2(x)\alpha \beta ^2(y))\alpha \beta ^2(z)\\
&\overset{(\ref{zinb})}{=}&\alpha ^2(x)(\alpha \beta ^2(y)\alpha \beta ^2(z))+
\alpha ^2(x)(\alpha \beta ^2(z)\alpha \beta ^2(y))\\
&=&\alpha ^2(x)\beta (\alpha \beta (y)\alpha \beta (z))+
\alpha ^2(x)\beta (\alpha \beta (z)\alpha \beta (y))\\
&=&\alpha ^2(x)\beta (\beta (y)\cdot \alpha (z))+
\alpha ^2(x)\beta (\beta (z)\cdot \alpha (y))\\
&=&\alpha (x)\cdot (\beta (y)\cdot \alpha (z))+
\alpha (x)\cdot (\beta (z)\cdot \alpha (y)), 
\end{eqnarray*}
\begin{eqnarray*}
(\beta (y)\cdot \alpha (x))\cdot \alpha \beta (z)&=&(\alpha ^2\beta (y)\alpha ^2\beta (x))\alpha \beta ^2(z)\\
&\overset{(\ref{zinb})}{=}&(\alpha ^2\beta (y)\alpha \beta ^2(z))\alpha ^2\beta (x)\\
&=&\alpha (\alpha \beta (y)\beta ^2(z))\beta (\alpha ^2(x))=(\beta (y)\cdot \beta (z))\cdot \alpha ^2(x), 
\end{eqnarray*}
finishing the proof. 
\end{proof}
\begin{remark}
An immediate consequence of (\ref{BHzinb1}) is that 
\begin{eqnarray}
&&(x\cdot \beta (y))\cdot \alpha \beta (z)=(x\cdot \beta (z))\cdot \alpha \beta (y), \;\;\;\forall \; x, y, z\in A. 
\label{consBHzinb1}
\end{eqnarray}
\end{remark}
\begin{remark}
In a BiHom-Zinbiel algebra $(A, \cdot , \alpha , \beta )$ with bijective $\alpha $ and $\beta $, the relation 
(\ref{BHzinb2}) is superfluous (being a consequence of (\ref{BHzinb1})). Indeed, we can compute: 
\begin{eqnarray*}
(\beta (y)\cdot \alpha (x))\cdot \alpha \beta (z)&=&(\beta (y)\cdot \beta (\alpha \beta ^{-1}(x)))\cdot \alpha \beta (z)\\
&\overset{(\ref{consBHzinb1})}{=}&(\beta (y)\cdot \beta (z))\cdot \alpha \beta (\alpha \beta ^{-1}(x))
=(\beta (y)\cdot \beta (z))\cdot \alpha ^2(x), \;\;\;q.e.d.
\end{eqnarray*}
\end{remark}
\begin{proposition}
Let $(A, \prec , \succ , \alpha , \beta )$ be a commutative BiHom-dendriform algebra. Define the operation $x\cdot y=x\prec y$, 
for all $x, y\in A$. Then $(A, \cdot , \alpha , \beta )$ is a BiHom-Zinbiel algebra. 
\end{proposition}
\begin{proof}
We compute: 
\begin{eqnarray*}
(x\cdot \beta (y))\cdot \alpha \beta (z)&=&(x\prec \beta (y))\prec \alpha \beta (z)\\
&\overset{(\ref{BiHomdend6})}{=}&\alpha (x)\prec (\beta (y)\prec \alpha (z))+\alpha (x)\prec (\beta (y)\succ \alpha (z))\\
&\overset{(\ref{comBHdendri})}{=}&\alpha (x)\prec (\beta (y)\prec \alpha (z))+\alpha (x)\prec (\beta (z)\prec \alpha (y))\\
&=&\alpha (x)\cdot (\beta (y)\cdot \alpha (z))+\alpha (x)\cdot (\beta (z)\cdot \alpha (y)), 
\end{eqnarray*}
\begin{eqnarray*}
(\beta (y)\cdot \alpha (x))\cdot \alpha \beta (z)&=&(\beta (y)\prec \alpha (x))\prec \alpha \beta (z)\\
&\overset{(\ref{comBHdendri})}{=}&(\beta (x)\succ \alpha (y))\prec \alpha \beta (z)\\
&\overset{(\ref{BiHomdend7})}{=}&\alpha \beta (x)\succ (\alpha (y)\prec \alpha (z))=
\beta (\alpha (x))\succ \alpha (y\prec z)\\
&\overset{(\ref{comBHdendri})}{=}&\beta (y\prec z)\prec \alpha ^2(x)=(\beta (y)\cdot \beta (z))\cdot \alpha ^2(x), 
\end{eqnarray*}
finishing the proof. 
\end{proof}

The converse holds in the case of bijective structure maps. 
\begin{proposition}
Let $(A, \cdot , \alpha , \beta )$ be a BiHom-Zinbiel algebra such that $\alpha $ and $\beta $ are bijective. Define new operations 
on $A$ by $x\prec y=x\cdot y$ and $x\succ y=\beta \alpha ^{-1}(y)\cdot \alpha \beta ^{-1}(x)$, for all 
$x, y\in A$. Then $(A, \prec , \succ , \alpha , \beta )$ is a commutative BiHom-dendriform algebra. 
\end{proposition}
\begin{proof}
Obviously, $A$ is commutative since 
$\beta (x)\succ \alpha (y)=\beta (y)\cdot \alpha (x)=\beta (y)\prec \alpha (x)$. Clearly, $\alpha $ and $\beta $ are 
multiplicative with respect to $\prec $ and $\succ $. Now we prove (\ref{BiHomdend6}), (\ref{BiHomdend7}) and 
(\ref{BiHomdend8}): 
\begin{eqnarray*}
(x\prec y)\prec \beta (z)&=&(x\cdot y)\cdot \beta (z)=(x\cdot \beta (\beta ^{-1}(y)))\cdot \alpha \beta (\alpha ^{-1}(z))\\
&\overset{(\ref{BHzinb1})}{=}&\alpha (x)\cdot (y\cdot z)+\alpha (x)\cdot (\beta \alpha ^{-1}(z)\cdot \alpha \beta ^{-1}(y))\\
&=&\alpha (x)\prec (y\prec z)+\alpha (x)\prec (y\succ z), 
\end{eqnarray*}
\begin{eqnarray*}
(x\succ y)\prec \beta (z)&=&(\beta \alpha ^{-1}(y)\cdot \alpha \beta ^{-1}(x))\cdot \beta (z)\\
&=&(\beta (\alpha ^{-1}(y))\cdot \alpha (\beta ^{-1}(x)))\cdot \alpha \beta (\alpha ^{-1}(z))\\
&\overset{(\ref{BHzinb2})}{=}&(\beta \alpha ^{-1}(y)\cdot \beta \alpha ^{-1}(z))\cdot \alpha ^2\beta ^{-1}(x)\\
&=&\beta \alpha ^{-1}(y\prec z)\cdot \alpha \beta ^{-1}(\alpha (x))=\alpha (x)\succ (y\prec z), 
\end{eqnarray*}
\begin{eqnarray*}
\alpha (x)\succ (y\succ z)&=&\alpha (x)\succ (\beta \alpha ^{-1}(z)\cdot \alpha \beta ^{-1}(y))=
(\beta ^2\alpha ^{-2}(z)\cdot y)\cdot \alpha ^2\beta ^{-1}(x)\\
&=&(\beta ^2\alpha ^{-2}(z)\cdot \beta (\beta ^{-1}(y)))\cdot \alpha \beta (\alpha \beta ^{-2}(x))\\
&\overset{(\ref{BHzinb1})}{=}&\beta ^2\alpha ^{-1}(z)\cdot (y\cdot \alpha ^2\beta ^{-2}(x))+
\beta ^2\alpha ^{-1}(z)\cdot (\alpha \beta ^{-1}(x)\cdot \alpha \beta ^{-1}(y))\\
&=&\beta ^2\alpha ^{-1}(z)\cdot (\beta \alpha ^{-1}(\alpha \beta ^{-1}(y))\cdot \alpha \beta ^{-1}(\alpha \beta ^{-1}(x)))\\
&&+\beta ^2\alpha ^{-1}(z)\cdot \alpha \beta ^{-1}(x\prec y)\\
&=&\beta ^2\alpha ^{-1}(z)\cdot (\alpha \beta ^{-1}(x)\succ \alpha \beta ^{-1}(y))+
\beta ^2\alpha ^{-1}(z)\cdot \alpha \beta ^{-1}(x\prec y)\\
&=&\beta ^2\alpha ^{-1}(z)\cdot \alpha \beta ^{-1}(x\succ y)+ 
\beta ^2\alpha ^{-1}(z)\cdot \alpha \beta ^{-1}(x\prec y)\\
&=&(x\succ y)\succ \beta (z)+(x\prec y)\succ \beta (z),
\end{eqnarray*}
finishing the proof.
\end{proof}
\begin{definition} A BiHom-tridendriform algebra is a 6-tuple $(A, \prec , \succ , \cdot , \alpha , \beta )$,
where $A$ is a linear space and $\prec , \succ , \cdot :A\otimes A\rightarrow A$ and
$\alpha , \beta :A\rightarrow A$ are linear maps
satisfying
\begin{eqnarray}
&&\alpha \circ \beta =\beta \circ \alpha , \label{BiHomtridend1} \\
&&\alpha (x\prec y)=\alpha (x)\prec \alpha (y), ~
\alpha (x\succ y)=\alpha (x)\succ \alpha (y), ~
\alpha (x\cdot y)=\alpha (x)\cdot \alpha (y), \label{BiHomtridend4} \\
&&\beta (x\prec y)=\beta (x)\prec \beta (y), ~
\beta (x\succ y)=\beta (x)\succ \beta (y), ~
\beta (x\cdot y)=\beta (x)\cdot \beta (y), \label{BiHomtridend7} \\
&&(x\prec y)\prec \beta (z)=\alpha (x)\prec (y\prec z+y\succ z+y\cdot z),  \label{BiHomtridend8} \\
&&(x \succ y)\prec \beta (z)=\alpha (x)\succ (y\prec z), \label{BiHomtridend9} \\
&&\alpha (x)\succ (y\succ z)=(x\prec y+x\succ y+x\cdot y)\succ \beta (z), \label{BiHomtridend10} \\
&&\alpha (x)\cdot (y\succ z)=(x\prec y)\cdot \beta (z), \label{BiHomtridend11} \\
&&\alpha (x)\succ (y\cdot z)=(x\succ y)\cdot \beta (z), \label{BiHomtridend12} \\
&&\alpha (x)\cdot (y\prec z)=(x\cdot y)\prec \beta (z), \label{BiHomtridend13} \\
&&\alpha (x)\cdot (y\cdot z)=(x\cdot y)\cdot \beta (z),  \label{BiHomtridend14}
\end{eqnarray}
for all $x, y, z\in A$. 
We call $\alpha $ and $\beta $ (in this order) the structure maps
of $A$.

A morphism $f:(A, \prec , \succ , \cdot , \alpha , \beta )\rightarrow (A', \prec ', \succ ', \cdot ', \alpha ', \beta ')$ of
BiHom-tridendriform algebras is a linear map
$f:A\rightarrow A'$ satisfying $f(x\prec y)=f(x)\prec ' f(y)$, $f(x\succ y)=f(x)\succ ' f(y)$ and
$f(x\cdot y)=f(x)\cdot ' f(y)$,
for all $x, y\in A$,
as well as $f\circ \alpha =\alpha '\circ f$ and $f\circ \beta =\beta '\circ f$.
\end{definition}

A BiHom-dendriform algebra $(A, \prec, \succ , \alpha , \beta )$ is also a BiHom-tridendriform algebra,
with the same operations $\prec $, $\succ $ and the same maps $\alpha $, $\beta $ and with the operation
$\cdot $ defined by $x\cdot y=0$ for all $x, y\in A$. Conversely, we have:
\begin{proposition} \label{DTDD}
Let $(A, \prec , \succ , \cdot , \alpha , \beta )$ be a BiHom-tridendriform algebra. Then
$(A, \prec ', \succ ', \alpha , \beta )$ is a BiHom-dendriform algebra, where the new operations $\prec '$ and
$\succ '$ are defined by $x\prec 'y=x\prec y+x\cdot y$ and $x\succ 'y=x\succ y$, for all $x, y\in A$.
\end{proposition}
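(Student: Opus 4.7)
The plan is to verify directly the five labelled axioms defining a BiHom-dendriform algebra for the tuple $(A,\prec',\succ',\alpha,\beta)$. Condition (\ref{BiHomdend1}) is inherited from (\ref{BiHomtridend1}). The multiplicativity conditions (\ref{BiHomdend3}) and (\ref{BiHomdend5}) follow from (\ref{BiHomtridend4}) and (\ref{BiHomtridend7}) applied summand-wise: for instance, $\alpha(x\prec' y)=\alpha(x\prec y)+\alpha(x\cdot y)=\alpha(x)\prec\alpha(y)+\alpha(x)\cdot\alpha(y)=\alpha(x)\prec'\alpha(y)$, and analogously for $\succ'$ and for $\beta$.

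The three remaining axioms (\ref{BiHomdend6})--(\ref{BiHomdend8}) are obtained by expanding both sides using the definitions of $\prec'$ and $\succ'$ and then matching the resulting sums term by term, each correspondence being supplied by exactly one tridendriform axiom. For (\ref{BiHomdend6}), the expansion of $(x\prec' y)\prec'\beta(z)$ produces the four terms $(x\prec y)\prec\beta(z)$, $(x\prec y)\cdot\beta(z)$, $(x\cdot y)\prec\beta(z)$ and $(x\cdot y)\cdot\beta(z)$, which are rewritten via (\ref{BiHomtridend8}), (\ref{BiHomtridend11}), (\ref{BiHomtridend13}) and (\ref{BiHomtridend14}) respectively; the resulting sum reassembles, by bilinearity, as $\alpha(x)\prec'(y\prec' z+y\succ' z)$, since $y\prec' z=y\prec z+y\cdot z$ and $y\succ' z=y\succ z$. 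For (\ref{BiHomdend7}), $(x\succ' y)\prec'\beta(z)=(x\succ y)\prec\beta(z)+(x\succ y)\cdot\beta(z)$ is handled by (\ref{BiHomtridend9}) and (\ref{BiHomtridend12}), giving $\alpha(x)\succ(y\prec z+y\cdot z)=\alpha(x)\succ'(y\prec' z)$. For (\ref{BiHomdend8}), the identity is essentially (\ref{BiHomtridend10}) itself, after observing that $x\prec' y+x\succ' y=x\prec y+x\cdot y+x\succ y$ and that $\succ'$ coincides with $\succ$.

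No step poses a real obstacle; the argument is a bookkeeping exercise that invokes each of (\ref{BiHomtridend8})--(\ref{BiHomtridend14}) exactly once. The only care-point is axiom (\ref{BiHomdend6}), where four expansion terms must be paired with the four relevant tridendriform identities and then repackaged into a single $\prec'$; once this matching is spelled out the remaining assembly is automatic, which is why I would present (\ref{BiHomdend6}) in full detail and merely indicate the (shorter) verifications of (\ref{BiHomdend7}) and (\ref{BiHomdend8}).
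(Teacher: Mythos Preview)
Your proposal is correct and follows essentially the same approach as the paper: the paper likewise dismisses (\ref{BiHomdend1})--(\ref{BiHomdend5}) as obvious, then verifies (\ref{BiHomdend6}) by expanding into the same four terms and invoking (\ref{BiHomtridend8}), (\ref{BiHomtridend11}), (\ref{BiHomtridend13}), (\ref{BiHomtridend14}), handles (\ref{BiHomdend7}) via (\ref{BiHomtridend9}) and (\ref{BiHomtridend12}), and reduces (\ref{BiHomdend8}) to (\ref{BiHomtridend10}). Your organizational remark about presenting (\ref{BiHomdend6}) in full and abbreviating the others matches the paper's presentation exactly.
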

\begin{proof}
The relations (\ref{BiHomdend1})-(\ref{BiHomdend5}) for $\prec '$ and $\succ '$ are obvious. We check
(\ref{BiHomdend6})-(\ref{BiHomdend8}):
\begin{eqnarray*}
(x\prec 'y)\prec '\beta (z)&=&(x\prec y+x\cdot y)\prec '\beta (z)\\
&=&(x\prec y+x\cdot y)\prec \beta (z)+(x\prec y+x\cdot y)\cdot \beta (z)\\
&=&(x\prec y)\prec \beta (z)+(x\cdot y)\prec \beta (z)\\
&&+(x\prec y)\cdot \beta (z)+(x\cdot y)\cdot \beta (z)\\
&\overset{(\ref{BiHomtridend8}), \;(\ref{BiHomtridend11}),\;(\ref{BiHomtridend13}),\;(\ref{BiHomtridend14})}{=}&
\alpha (x)\prec (y\prec z+y\succ z+y\cdot z)+\alpha (x)\cdot (y\prec z)\\
&&+\alpha (x)\cdot (y\succ z)+\alpha (x)\cdot (y\cdot z)\\
&=&\alpha (x)\prec (y\prec z+y\succ z+y\cdot z)\\
&&+\alpha (x)\cdot (y\prec z+y\succ z+y\cdot z)\\
&=&\alpha (x)\prec '(y\prec z+y\succ z+y\cdot z)\\
&=&\alpha (x)\prec '(y\prec 'z+y\succ 'z),
\end{eqnarray*}
\begin{eqnarray*}
(x\succ 'y)\prec '\beta (z)&=&(x\succ y)\prec \beta (z)+(x\succ y)\cdot \beta (z)\\
&\overset{(\ref{BiHomtridend9}), \;(\ref{BiHomtridend12})}{=}&\alpha (x)\succ (y\prec z)+\alpha (x)\succ (y\cdot z)\\
&=&\alpha (x)\succ (y\prec z+y\cdot z)
=\alpha (x)\succ '(y\prec 'z),
\end{eqnarray*}
\begin{eqnarray*}
\alpha (x)\succ '(y\succ 'z)&=&\alpha (x)\succ (y\succ z)\\
&\overset{(\ref{BiHomtridend10})}{=}&(x\prec y+x\succ y+x\cdot y)\succ \beta (z)\\
&=&(x\prec 'y+x\succ 'y)\succ '\beta (z),
\end{eqnarray*}
finishing the proof.
\end{proof}
\begin{proposition}
Let $(A, \prec , \succ , \cdot )$ be a tridendriform algebra and $\alpha , \beta :A\rightarrow A$ two
commuting tridendriform algebra endomorphisms. Define $\prec _{(\alpha , \beta )}, \succ _{(\alpha , \beta )},
\cdot _{(\alpha , \beta )}:
A\ot A\rightarrow A$ by
\begin{eqnarray*}
&&x\prec _{(\alpha , \beta )}y=\alpha (x)\prec \beta (y), \;\;\;\;\;
x\succ _{(\alpha , \beta )}y=\alpha (x)\succ \beta (y), \;\;\;\;\;
x\cdot _{(\alpha , \beta )}y=\alpha (x)\cdot \beta (y),
\end{eqnarray*}
for all $x, y\in A$. Then $A_{(\alpha , \beta )}:=(A, \prec _{(\alpha , \beta )}, \succ _{(\alpha , \beta )},
\cdot _{(\alpha , \beta )},
\alpha , \beta )$ is a BiHom-tridendriform algebra, called the Yau twist of $A$. Moreover, assume that
$(A', \prec ', \succ ', \cdot ' )$ is another tridendriform algebra and $\alpha ', \beta ':A'\rightarrow A'$ are
two commuting tridendriform algebra endomorphisms and $f:A\rightarrow A'$ is a morphism of
tridendriform algebras satisfying $f\circ \alpha =\alpha '\circ f$ and $f\circ \beta =\beta '\circ f$. Then
$f:A_{(\alpha , \beta )}\rightarrow A'_{(\alpha ', \beta ')}$ is a morphism of BiHom-tridendriform algebras.
\end{proposition}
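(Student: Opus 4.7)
The plan is to follow the same pattern as the proof of Proposition \ref{Yaudend}, since the present statement is the tridendriform analogue of that result. First, I would verify the structural compatibility conditions (\ref{BiHomtridend1})--(\ref{BiHomtridend7}). The relation $\alpha\circ\beta=\beta\circ\alpha$ is part of the hypothesis. The multiplicativity of $\alpha$ and $\beta$ with respect to the new operations is an immediate consequence of the fact that $\alpha$ and $\beta$ are commuting tridendriform algebra endomorphisms; for instance, $\alpha(x\prec_{(\alpha,\beta)}y)=\alpha(\alpha(x)\prec\beta(y))=\alpha^2(x)\prec\alpha\beta(y)=\alpha(x)\prec_{(\alpha,\beta)}\alpha(y)$, and analogously for $\beta$ and for the operations $\succ_{(\alpha,\beta)}$ and $\cdot_{(\alpha,\beta)}$.

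Next I would verify the seven BiHom-tridendriform axioms (\ref{BiHomtridend8})--(\ref{BiHomtridend14}). The key observation, exactly as in the dendriform case, is that each axiom reduces, after unfolding the definitions of the twisted operations and using the commutation and multiplicativity of $\alpha,\beta$, to the corresponding original tridendriform axiom applied to the triple $\alpha^2(x),\alpha\beta(y),\beta^2(z)$. For example, to check (\ref{BiHomtridend8}), I compute
\begin{eqnarray*}
(x\prec_{(\alpha,\beta)}y)\prec_{(\alpha,\beta)}\beta(z)
&=&(\alpha^2(x)\prec\alpha\beta(y))\prec\beta^2(z),\\
\alpha(x)\prec_{(\alpha,\beta)}(y\prec_{(\alpha,\beta)}z+y\succ_{(\alpha,\beta)}z+y\cdot_{(\alpha,\beta)}z)
&=&\alpha^2(x)\prec(\alpha\beta(y)\prec\beta^2(z)+\alpha\beta(y)\succ\beta^2(z)+\alpha\beta(y)\cdot\beta^2(z)),
\end{eqnarray*}
and equality follows directly from (\ref{tridend1}). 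The remaining six axioms are verified in exactly the same way, using (\ref{tridend2})--(\ref{tridend7}) respectively; in each case, the twisted left-hand and right-hand sides both reduce to the untwisted expressions evaluated on $\alpha^2(x),\alpha\beta(y),\beta^2(z)$.

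Finally, for the morphism assertion, suppose $f:A\to A'$ is a morphism of tridendriform algebras satisfying $f\circ\alpha=\alpha'\circ f$ and $f\circ\beta=\beta'\circ f$. Then for any $x,y\in A$,
\begin{eqnarray*}
f(x\prec_{(\alpha,\beta)}y)=f(\alpha(x)\prec\beta(y))=f(\alpha(x))\prec' f(\beta(y))=\alpha'(f(x))\prec'\beta'(f(y))=f(x)\prec'_{(\alpha',\beta')}f(y),
\end{eqnarray*}
and similarly for $\succ_{(\alpha,\beta)}$ and $\cdot_{(\alpha,\beta)}$; combined with the intertwining relations with $\alpha',\beta'$ (which hold by assumption), this shows $f$ is a morphism of BiHom-tridendriform algebras.

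There is no genuine conceptual obstacle here: the whole argument is routine bookkeeping, and the only mild care needed is to track the powers of $\alpha$ and $\beta$ appearing on each side so that, after using $\alpha\beta=\beta\alpha$, both sides collapse to the same expression in $\alpha^2(x),\alpha\beta(y),\beta^2(z)$. As in Proposition \ref{Yaudend}, it is reasonable to write out (\ref{BiHomtridend8}) in full and leave the analogous verifications of (\ref{BiHomtridend9})--(\ref{BiHomtridend14}) to the reader.
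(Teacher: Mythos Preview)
Your proposal is correct and follows exactly the approach the paper takes: the paper's own proof simply reads ``Similar to the proof of Proposition \ref{Yaudend} and left to the reader,'' and your write-up spells out precisely those details, reducing each BiHom-tridendriform axiom to the corresponding tridendriform axiom evaluated at $\alpha^2(x),\alpha\beta(y),\beta^2(z)$.
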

\begin{proof}
Similar to the proof of Proposition \ref{Yaudend} and left to the reader.
\end{proof}
\begin{remark}
More generally, one can prove that, if
$(A, \prec , \succ , \cdot ,\alpha , \beta )$ is a BiHom-tridendriform algebra and $\tilde{\alpha }, \tilde{\beta }:
A\rightarrow A$ are two morphisms of BiHom-tridendriform algebras such that any two of the maps $\alpha, \beta ,
\tilde{\alpha }, \tilde{\beta }$ commute, and we define new multiplications on $A$ by
$x\prec 'y=\tilde{\alpha }(x)\prec \tilde{\beta }(y)$, 
$x\succ 'y=\tilde{\alpha }(x)\succ \tilde{\beta }(y)$ and 
$x\cdot 'y=\tilde{\alpha }(x)\cdot \tilde{\beta }(y)$, 
for all $x, y\in A$, then $(A, \prec ', \succ ', \cdot ', \alpha \circ \tilde{\alpha }, \beta \circ \tilde{\beta })$
is a BiHom-tridendriform algebra.
\end{remark}
\begin{proposition} \label{BHTDBH}
Let $(A, \prec , \succ , \cdot , \alpha , \beta )$ be a BiHom-tridendriform algebra.
Define a multiplication $*:A\ot A\rightarrow A$ by
$x*y=x\prec y+x\succ y+x\cdot y$, for all $x, y\in A$.
Then $(A, *, \alpha , \beta )$ is a BiHom-associative algebra.
\end{proposition}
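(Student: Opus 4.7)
The plan is to verify directly the three defining axioms of a BiHom-associative algebra for $(A,*,\alpha,\beta)$. The condition $\alpha\circ\beta=\beta\circ\alpha$ is already given in (\ref{BiHomtridend1}). The multiplicativity of $\alpha$ and $\beta$ with respect to $*$ follows by expanding $\alpha(x*y)=\alpha(x\prec y+x\succ y+x\cdot y)$, applying (\ref{BiHomtridend4}) termwise, and then recombining to get $\alpha(x)*\alpha(y)$; the argument for $\beta$ is identical, using (\ref{BiHomtridend7}).

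The main content is therefore the BiHom-associativity identity $\alpha(x)*(y*z)=(x*y)*\beta(z)$. I would expand both sides using the definition of $*$ into sums of nine monomials each, of the form $\alpha(x)\,\diamond_1 (y\,\diamond_2 z)$ on the left and $(x\,\diamond_1 y)\,\diamond_2\beta(z)$ on the right, where $\diamond_1,\diamond_2\in\{\prec,\succ,\cdot\}$.

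The hard but purely mechanical step is the bookkeeping that pairs up these eighteen monomials via the seven compatibility axioms (\ref{BiHomtridend8})--(\ref{BiHomtridend14}). Concretely, I would use: (\ref{BiHomtridend8}) to rewrite $(x\prec y)\prec\beta(z)$ as the sum $\alpha(x)\prec(y\prec z)+\alpha(x)\prec(y\succ z)+\alpha(x)\prec(y\cdot z)$; (\ref{BiHomtridend10}) to rewrite $\alpha(x)\succ(y\succ z)$ as $(x\prec y)\succ\beta(z)+(x\succ y)\succ\beta(z)+(x\cdot y)\succ\beta(z)$; and (\ref{BiHomtridend9}), (\ref{BiHomtridend11}), (\ref{BiHomtridend12}), (\ref{BiHomtridend13}), (\ref{BiHomtridend14}) to identify the five remaining pairs one-to-one. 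Summing these identifications shows that every term of the left-hand side appears exactly once on the right-hand side, and conversely.

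The only potential obstacle is the clerical one of ensuring that the two ``triangle'' axioms (\ref{BiHomtridend8}) and (\ref{BiHomtridend10}) together with the five ``one-to-one'' axioms exhaust all nine monomials on each side without overlap; a convenient way to carry this out is to present the argument exactly as done in Proposition \ref{DTDD}, or equivalently to reduce it to that proposition by the following observation: the BiHom-dendriform algebra $(A,\prec',\succ',\alpha,\beta)$ of Proposition \ref{DTDD} has associated associative-type multiplication $x\prec' y+x\succ' y=x\prec y+x\cdot y+x\succ y=x*y$, so the BiHom-associativity of $*$ would follow once one knows that every BiHom-dendriform algebra yields a BiHom-associative algebra via $\prec'+\succ'$; this last fact is then a direct consequence of (\ref{BiHomdend6})--(\ref{BiHomdend8}) and is strictly easier to verify than the tridendriform version.
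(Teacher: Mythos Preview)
Your direct-expansion approach is essentially identical to the paper's proof: the paper expands $\alpha(x)*(y*z)$ into nine terms, applies (\ref{BiHomtridend8})--(\ref{BiHomtridend10}) and then (\ref{BiHomtridend11})--(\ref{BiHomtridend14}) to transform them, and regroups into $(x*y)*\beta(z)$.

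Your alternative route---first proving the BiHom-dendriform-to-BiHom-associative statement directly from (\ref{BiHomdend6})--(\ref{BiHomdend8}), and then invoking Proposition~\ref{DTDD} to reduce the tridendriform case to the dendriform one---is genuinely different from the paper's organization. The paper actually goes the opposite way: it proves the present proposition by brute force and then obtains the dendriform case as Corollary~\ref{BHDBHA} by setting $\cdot=0$. Your alternative is logically sound (the dendriform statement can be checked independently, with only four terms per side instead of nine) and arguably more economical, since it reuses Proposition~\ref{DTDD} rather than repeating the same style of bookkeeping at a larger scale. The trade-off is that the paper's direct computation is self-contained and makes the role of all seven axioms (\ref{BiHomtridend8})--(\ref{BiHomtridend14}) transparent in a single display.
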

\begin{proof}
Obviously we have $\alpha (x*y)=\alpha (x)*\alpha (y)$ and $\beta (x*y)=\beta (x)*\beta (y)$,
for all $x, y\in A$, by using (\ref{BiHomtridend4}) and (\ref{BiHomtridend7}). Now we compute, for $x, y, z\in A$:
\begin{eqnarray*}
\alpha (x)*(y*z)&=&\alpha (x)*(y\prec z)+\alpha (x)*(y\succ z)+\alpha (x)*(y\cdot z)\\
&=&\alpha (x)\prec (y\prec z)+\alpha (x)\succ (y\prec z)+\alpha (x)\cdot (y\prec z)\\
&&+\alpha (x)\prec (y\succ z)+\alpha (x)\succ (y\succ z)+\alpha (x)\cdot (y\succ z)\\
&&+\alpha (x)\prec (y\cdot z)+\alpha (x)\succ (y\cdot z)+\alpha (x)\cdot (y\cdot z)\\
&=&\alpha (x)\prec (y\prec z+y\succ z+y\cdot z)+\alpha (x)\succ (y\prec z)\\
&&+\alpha (x)\succ (y\succ z)+\alpha (x)\cdot (y\prec z)+\alpha (x)\cdot (y\succ z)\\
&&+\alpha (x)\succ (y\cdot z)+\alpha (x)\cdot (y\cdot z)\\
&\overset{(\ref{BiHomtridend8})-(\ref{BiHomtridend10})}{=}&(x\prec y)\prec \beta (z)+
(x\succ y)\prec \beta (z)+(x\prec y)\succ \beta (z)\\
&&+(x\succ y)\succ \beta (z)+(x\cdot y)\succ \beta (z)+\alpha (x)\cdot (y\prec z)\\
&&+\alpha (x)\cdot (y\succ z)
+\alpha (x)\succ (y\cdot z)+\alpha (x)\cdot (y\cdot z)\\
&\overset{(\ref{BiHomtridend11})-(\ref{BiHomtridend14})}{=}&(x\prec y)\prec \beta (z)+
(x\succ y)\prec \beta (z)+(x\prec y)\succ \beta (z)\\
&&+(x\succ y)\succ \beta (z)+(x\cdot y)\succ \beta (z)+(x\cdot y)\prec \beta (z)\\
&&+(x\prec y)\cdot \beta (z)+(x\succ y)\cdot \beta (z)+(x\cdot y)\cdot \beta (z)\\
&=&(x\prec y+x\succ y+x\cdot y)\prec \beta (z)\\
&&+(x\prec y+x\succ y+x\cdot y)\succ \beta (z)\\
&&+(x\prec y+x\succ y+x\cdot y)\cdot \beta (z)\\
&=&(x*y)\prec \beta (z)+(x*y)\succ \beta (z)+(x*y)\cdot \beta (z)
=(x*y)*\beta (z),
\end{eqnarray*}
finishing the proof.
\end{proof}
\begin{corollary}\label{BHDBHA}
Let $(A, \prec , \succ , \alpha , \beta )$ be a BiHom-dendriform algebra. Define a multiplication $*:A\ot A\rightarrow A$ by
$x*y=x\prec y+x\succ y$. Then $(A, *, \alpha , \beta )$ is a BiHom-associative algebra.
\end{corollary}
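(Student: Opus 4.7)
The cleanest plan is to chain together the two results just proved. By the remark preceding Proposition \ref{DTDD}, any BiHom-dendriform algebra $(A,\prec,\succ,\alpha,\beta)$ can be viewed as a BiHom-tridendriform algebra by adjoining the trivial operation $x\cdot y := 0$ for all $x,y\in A$. All the axioms (\ref{BiHomtridend1})--(\ref{BiHomtridend14}) reduce either to the corresponding BiHom-dendriform axioms (\ref{BiHomdend1})--(\ref{BiHomdend8}) or to the trivial equality $0=0$, so this is a genuine BiHom-tridendriform structure.

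Applying Proposition \ref{BHTDBH} to this structure produces a BiHom-associative algebra with multiplication
\[
x*y \;=\; x\prec y + x\succ y + x\cdot y \;=\; x\prec y + x\succ y,
\]
which is exactly the multiplication in the statement of the corollary. The structure maps $\alpha,\beta$ are preserved. Hence $(A,*,\alpha,\beta)$ is BiHom-associative, as claimed.

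If one prefers a self-contained verification, the same computation as in the proof of Proposition \ref{BHTDBH} collapses to four terms: expand $\alpha(x)*(y*z)$ into the four summands indexed by pairs in $\{\prec,\succ\}\times\{\prec,\succ\}$, then apply (\ref{BiHomdend6}) to the two terms of the form $\alpha(x)\prec(y\,?\,z)$, apply (\ref{BiHomdend7}) to $\alpha(x)\succ(y\prec z)$, and apply (\ref{BiHomdend8}) to $\alpha(x)\succ(y\succ z)$. The right-hand sides reassemble into $(x*y)\prec\beta(z)+(x*y)\succ\beta(z)=(x*y)*\beta(z)$. Multiplicativity of $\alpha$ and $\beta$ with respect to $*$ is immediate from (\ref{BiHomdend3})--(\ref{BiHomdend5}), and the commutation $\alpha\beta=\beta\alpha$ is (\ref{BiHomdend1}). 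There is no real obstacle here; the only thing to notice is that the $\cdot$-operation in Proposition \ref{BHTDBH} was never used in an essential way for dendriform-type reassembly, which is precisely why the trivial embedding into BiHom-tridendriform algebras works.
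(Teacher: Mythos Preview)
Your proposal is correct and follows exactly the approach intended by the paper: the corollary is stated immediately after Proposition \ref{BHTDBH} precisely because it follows by embedding the BiHom-dendriform algebra as a BiHom-tridendriform algebra with $\cdot = 0$ (as noted before Proposition \ref{DTDD}) and then applying Proposition \ref{BHTDBH}. Your optional direct verification is also fine and is just the specialization of the proof of Proposition \ref{BHTDBH} to this case.
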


Similarly to the characterization of dendriform algebras in terms of bimodules (see for instance \cite{nankai}, Section 6.3)  
and by using Corollary \ref{BHDBHA}, 
we obtain the following characterization of BiHom-dendriform algebras: 
\begin{proposition}\label{BHdendbimod}
Let $A$ be a linear space, $\prec , \succ :A\otimes A\rightarrow A$ linear maps and $\alpha , \beta :A\rightarrow A$ two 
commuting linear maps that are multiplicative with respect to $\prec $ and $\succ $. Define $a*b=a\prec b+a\succ b$, for all 
$a, b\in A$. Then $(A, \prec , \succ , \alpha , \beta )$ is a BiHom-dendriform algebra if and only if $(A, *, \alpha , \beta )$ 
is a BiHom-associative algebra and 
$(A, \alpha , \beta )$ is a bimodule over $(A, *, \alpha , \beta )$, 
with actions $a\cdot m=a\succ m$ and $m\cdot a=m\prec a$, for all $a, m\in A$.
\end{proposition}

We introduce now the BiHom version of quadri-algebras (for the Hom version see \cite{an}). 
\begin{definition} A BiHom-quadri-algebra is a 7-tuple $(Q, \nwarrow, \swarrow, \nearrow, \searrow, \alpha , \beta )$
consisting of a linear space $Q$ and linear maps $\nwarrow,
\swarrow, \nearrow, \searrow: Q\otimes Q\rightarrow Q$ and $\alpha ,
\beta : Q\rightarrow Q$ satisfying the axioms below
(\ref{BiHomqua6})-(\ref{BiHomqua15}) (for all $x, y, z\in Q$). To state them, consider the following operations:
\begin{eqnarray}
&& x\succ y:=x\nearrow y+ x\searrow y    \label{BiHomqua1} \\
&& x\prec y:=x\nwarrow y+ x\swarrow y, \label{BiHomqua2} \\
&& x\vee y:=x\searrow y+ x\swarrow y, \label{BiHomqua3} \\
&& x\wedge y:=x\nearrow y+ x\nwarrow y, \label{BiHomqua4}
\end{eqnarray}
\begin{eqnarray}
 x\ast y:&=&x\searrow y+ x\nearrow y+ x\swarrow y+ x\nwarrow y \nonumber \\
 &=& x\succ y+ x\prec y= x\vee y+ x\wedge y. \label{BiHomqua5}
\end{eqnarray}
The axioms are
\begin{eqnarray}
&&\alpha \circ \beta =\beta \circ \alpha , \label{BiHomqua6} \\
&&\alpha (x\nearrow y)=\alpha (x)\nearrow \alpha (y), ~~~~ \alpha (x\searrow y)=\alpha (x)\searrow \alpha (y), \label{BiHomqua7} \\
&&\alpha (x\nwarrow y)=\alpha (x)\nwarrow \alpha (y), ~~~~ \alpha (x\swarrow y)=\alpha (x)\swarrow \alpha (y), \label{BiHomqua8} \\
&&\beta (x\nearrow y)=\beta (x)\nearrow \beta (y),~~~~  \beta (x\searrow y)=\beta (x)\searrow \beta (y)\label{BiHomqua9} \\
&&\beta (x\nwarrow y)=\beta (x)\nwarrow \beta (y),~~~~ \beta (x\swarrow y)=\beta (x)\swarrow \beta (y) \label{BiHomqua10} \\
&& (x\nwarrow y)\nwarrow \beta(z)=\alpha(x) \nwarrow (y\ast z), ~~~~
(x\nearrow y)\nwarrow \beta(z)=\alpha(x)\nearrow (y\prec
z),\label{BiHomqua11}\\
&& (x\wedge y)\nearrow \beta(z)=\alpha(x)\nearrow (y\succ z),~~~~  (x\swarrow y)\nwarrow \beta(z)=\alpha(x)\swarrow (y\wedge z), \label{BiHomqua12}\\
&& (x\searrow y)\nwarrow \beta(z)=\alpha(x)\searrow (y\nwarrow
z),~~~~ (x\vee
y)\nearrow \beta(z)=\alpha(x)\searrow (y\nearrow z), \label{BiHomqua13} \\
&& (x\prec y)\swarrow \beta(z)=\alpha(x)\swarrow (y\vee z),~~~~  (x\succ y)\swarrow \beta(z)=\alpha(x)\searrow (y\swarrow z), \label{BiHomqua14}\\
&& (x\ast y)\searrow \beta(z)=\alpha(x)\searrow (y\searrow z).
\label{BiHomqua15}
\end{eqnarray}

A morphism $f:(Q, \nwarrow, \swarrow, \nearrow, \searrow, \alpha ,
\beta )\rightarrow (Q', \nwarrow', \swarrow', \nearrow', \searrow',
\alpha' , \beta' )$ of BiHom-quadri-algebras is a linear map
$f:Q\rightarrow Q'$ satisfying $f(x\nearrow y)=f(x)\nearrow' f(y),
f(x\searrow y)=f(x)\searrow' f(y), f(x\nwarrow y)=f(x)\nwarrow'
f(y)$ and $f(x\swarrow y)=f(x)\swarrow ' f(y)$, for all $x, y\in Q$,
as well as $f\circ \alpha =\alpha '\circ f$ and $f\circ \beta =\beta
'\circ f$.
\end{definition}
\begin{proposition} \label{quad}
Let $(Q, \nwarrow, \swarrow, \nearrow, \searrow )$ be a
quadri-algebra and $\alpha , \beta :Q\rightarrow Q$ two commuting
quadri-algebra endomorphisms. Define $\searrow _{(\alpha , \beta )},
\nearrow _{(\alpha , \beta )}, \swarrow _{(\alpha , \beta )},
\nwarrow _{(\alpha , \beta )}: Q\ot Q\rightarrow Q$ by
\begin{eqnarray*}
&&x\searrow _{(\alpha , \beta )}y=\alpha (x)\searrow \beta (y),
\quad \quad x\nearrow _{(\alpha , \beta )}y=\alpha (x)\nearrow \beta
(y),\\
&&x\swarrow _{(\alpha , \beta )}y=\alpha (x)\swarrow \beta (y),
\quad \quad x\nwarrow _{(\alpha , \beta )}y=\alpha (x)\nwarrow \beta
(y),
\end{eqnarray*}
for all $x, y\in Q$. 
Then $Q_{(\alpha , \beta )}:=(Q, \nwarrow _{(\alpha , \beta )},  \swarrow
_{(\alpha , \beta )}, \nearrow _{(\alpha , \beta )},
\searrow
_{(\alpha , \beta )},  \alpha , \beta )$ is a BiHom-quadri-algebra, called the Yau twist of $Q$. Moreover,
assume that $(Q', \nwarrow', \swarrow', \nearrow', \searrow' )$ is
another quadri-algebra and $\alpha ', \beta ':Q'\rightarrow Q'$ are
two commuting quadri-algebra endomorphisms and $f:Q\rightarrow Q'$
is a morphism of quadri-algebras satisfying $f\circ \alpha =\alpha
'\circ f$ and $f\circ \beta =\beta '\circ f$. Then $f:Q_{(\alpha ,
\beta )}\rightarrow Q'_{(\alpha ', \beta ')}$ is a morphism of
BiHom-quadri algebras.
\end{proposition}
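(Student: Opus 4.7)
The strategy is a direct analogue of the proof of Proposition \ref{Yaudend}: we exploit the fact that $\alpha $ and $\beta $ are commuting quadri-algebra endomorphisms in order to transport each of the ten quadri-algebra axioms (\ref{1.19})--(\ref{1.23}) from the triple $(\alpha ^2(x), \alpha \beta (y), \beta ^2(z))$ to the twisted triple $(x,y,z)$.

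First, the preliminary conditions are immediate. The commutativity (\ref{BiHomqua6}) is part of the hypothesis. For (\ref{BiHomqua7})--(\ref{BiHomqua10}), since $\alpha $ is a quadri-algebra endomorphism and $\alpha \beta =\beta \alpha $, one has, for instance,
\begin{eqnarray*}
\alpha (x\nearrow _{(\alpha , \beta )}y)&=&\alpha (\alpha (x)\nearrow \beta (y))=\alpha ^2(x)\nearrow \alpha \beta (y)\\
&=&\alpha ^2(x)\nearrow \beta \alpha (y)=\alpha (x)\nearrow _{(\alpha ,\beta )}\alpha (y),
\end{eqnarray*}
and the remaining seven multiplicativity identities are verified in exactly the same way.

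The key observation is that, since the Yau twist distributes over sums, the auxiliary operations defined by (\ref{BiHomqua1})--(\ref{BiHomqua5}) in $Q_{(\alpha ,\beta )}$ are simply the Yau twists of the auxiliary operations of $Q$; explicitly,
\begin{eqnarray*}
&&x\succ _{(\alpha ,\beta )}y=\alpha (x)\succ \beta (y),\quad x\prec _{(\alpha ,\beta )}y=\alpha (x)\prec \beta (y),\\
&&x\vee _{(\alpha ,\beta )}y=\alpha (x)\vee \beta (y),\quad x\wedge _{(\alpha ,\beta )}y=\alpha (x)\wedge \beta (y),\quad x\ast _{(\alpha ,\beta )}y=\alpha (x)\ast \beta (y).
\end{eqnarray*}
With this in hand, each of the BiHom-quadri axioms (\ref{BiHomqua11})--(\ref{BiHomqua15}) reduces mechanically to the corresponding axiom among (\ref{1.19})--(\ref{1.23}). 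For example, for the first half of (\ref{BiHomqua11}),
\begin{eqnarray*}
(x\nwarrow _{(\alpha ,\beta )}y)\nwarrow _{(\alpha ,\beta )}\beta (z)&=&(\alpha ^2(x)\nwarrow \alpha \beta (y))\nwarrow \beta ^2(z),\\
\alpha (x)\nwarrow _{(\alpha ,\beta )}(y\ast _{(\alpha ,\beta )}z)&=&\alpha ^2(x)\nwarrow \beta (\alpha (y)\ast \beta (z))=\alpha ^2(x)\nwarrow (\alpha \beta (y)\ast \beta ^2(z)),
\end{eqnarray*}
where we used that $\beta $ commutes with $\alpha $ and is a quadri-algebra endomorphism; the two sides now coincide by (\ref{1.19}) applied to $\alpha ^2(x), \alpha \beta (y), \beta ^2(z)$. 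The remaining nine axioms are checked in an identical fashion, using the corresponding instance of (\ref{1.19})--(\ref{1.23}).

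Finally, the claim about morphisms is routine: the hypothesis $f\circ \alpha =\alpha '\circ f$ and $f\circ \beta =\beta '\circ f$ together with the fact that $f$ preserves each of $\nearrow , \searrow , \nwarrow , \swarrow $ yields, for instance, $f(x\nearrow _{(\alpha ,\beta )}y)=f(\alpha (x)\nearrow \beta (y))=\alpha '(f(x))\nearrow 'f(\beta (y))=f(x)\nearrow '_{(\alpha ',\beta ')}f(y)$, and similarly for the other three twisted operations. The main obstacle is purely bookkeeping: none of the ten axioms is individually difficult, but one must be careful to express the auxiliary operations of the Yau twist in terms of those of $Q$ before invoking (\ref{1.19})--(\ref{1.23}).
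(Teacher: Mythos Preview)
Your proof is correct and follows essentially the same approach as the paper's: both reduce the verification to observing that the auxiliary operations $\succ ,\prec ,\vee ,\wedge ,\ast $ of the Yau twist are the Yau twists of the original auxiliary operations, and then checking (\ref{BiHomqua11}) by applying (\ref{1.19}) to $\alpha ^2(x),\alpha \beta (y),\beta ^2(z)$. You simply spell out a bit more of the routine verification (multiplicativity and the morphism claim) than the paper does.
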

\begin{proof}
We only prove (\ref{BiHomqua11}) and leave the
rest to the reader. We denote by $x\succ _{(\alpha , \beta )}y:=x\nearrow _{(\alpha , \beta )}y+ 
x\searrow _{(\alpha , \beta )}y$, 
$x\prec _{(\alpha , \beta )}y:=x\nwarrow _{(\alpha , \beta )}y+ x\swarrow _{(\alpha , \beta )}y$, 
$x\vee _{(\alpha , \beta )}y:=x\searrow _{(\alpha , \beta )}y+ x\swarrow _{(\alpha , \beta )}y$, 
$x\wedge _{(\alpha , \beta )}y:=x\nearrow _{(\alpha , \beta )}y+ x\nwarrow _{(\alpha , \beta )}y$ 
and $x\ast _{(\alpha , \beta )}y:=x\searrow _{(\alpha , \beta )}y+ x\nearrow _{(\alpha , \beta )}y+ 
x\swarrow _{(\alpha , \beta )}y+ x\nwarrow _{(\alpha , \beta )}y$, for all $x, y\in Q$. 
It is easy to get $x\succ _{(\alpha , \beta )}
y= \alpha (x)\succ \beta (y), x\prec _{(\alpha , \beta )} y= \alpha
(x)\prec \beta (y), x\vee _{(\alpha , \beta )} y= \alpha (x)\vee
\beta (y), x\wedge _{(\alpha , \beta )} y= \alpha (x)\wedge \beta
(y)$ and $x\ast _{(\alpha , \beta )} y= \alpha (x)\ast \beta (y)$
for all $x, y \in Q$. By using the fact that $\alpha$ and $\beta $
are two commuting quadri-algebra endomorphisms, one can compute, for
all $x, y, z \in Q$:
\begin{eqnarray*}
&&(x\nwarrow _{(\alpha , \beta )}y)\nwarrow _{(\alpha , \beta )}\beta (z)=
(\alpha ^2(x)\nwarrow \alpha \beta (y))\nwarrow \beta ^2(z), \\
&&(x\nearrow _{(\alpha , \beta )}y)\nwarrow _{(\alpha , \beta )}\beta (z)=
(\alpha ^2(x)\nearrow \alpha \beta (y))\nwarrow \beta ^2(z), \\
&&\alpha (x)\nwarrow _{(\alpha , \beta )}(y\ast _{(\alpha , \beta )}z)=
\alpha ^2(x)\nwarrow(\alpha \beta (y)\ast \beta ^2(z)), \\
&&\alpha (x)\nearrow _{(\alpha , \beta )}(y\prec _{(\alpha , \beta )}z)=
\alpha ^2(x)\nearrow(\alpha \beta (y)\prec \beta ^2(z)).
\end{eqnarray*}
Thus, (\ref{BiHomqua11}) follows from
(\ref{1.19}) applied to the elements $\alpha ^2(x)$,
$\alpha \beta (y)$, $\beta ^2(z)$.
\end{proof}
\begin{remark}
More generally, let $(Q, \nwarrow, \swarrow, \nearrow, \searrow,
\alpha , \beta )$ be a BiHom-quadri-algebra and $\tilde{\alpha },
\tilde{\beta }: Q\rightarrow Q$ two morphisms of
BiHom-quadri-algebras such that any two of the maps $\alpha, \beta ,
\tilde{\alpha }, \tilde{\beta }$ commute. Define new multiplications
on $Q$ by
$x\nearrow 'y=\tilde{\alpha }(x)\nearrow \tilde{\beta }(y)$, 
$x\searrow 'y=\tilde{\alpha }(x)\searrow \tilde{\beta }(y)$, 
$x\nwarrow 'y=\tilde{\alpha }(x)\nwarrow \tilde{\beta }(y)$ and 
$x\swarrow 'y=\tilde{\alpha }(x)\swarrow \tilde{\beta }(y)$. 
Then $(Q,  \nwarrow ', \swarrow ', \nearrow ', \searrow ',\alpha \circ \tilde{\alpha }, \beta
\circ \tilde{\beta })$ is a BiHom-quadri-algebra.
\end{remark}
\begin{remark} Let $(Q, \nwarrow, \swarrow, \nearrow, \searrow,
\alpha , \beta )$ be a BiHom-quadri-algebra. 
From (\ref{BiHomqua11})-(\ref{BiHomqua15}) we obtain, for all $x, y, z\in Q$:
\begin{eqnarray*}
 &&(x\prec y)\prec \beta(z)=\alpha(x) \prec (y\ast z),~  (x\succ y)\prec \beta(z)=\alpha(x) \succ (y\prec
 z),\\
 && (x\ast y)\succ \beta(z)=\alpha(x) \succ (y\succ z).
\end{eqnarray*}
Thus, $(Q, \prec, \succ, \alpha , \beta )$ is a
BiHom-dendriform algebra. By analogy with \cite{agui}, 
we denote it by $Q_h$ and call it the 
horizontal BiHom-dendriform algebra associated to $Q$.

Also from (\ref{BiHomqua11})-(\ref{BiHomqua15}) we obtain, for all $x, y, z\in Q$:
\begin{eqnarray*}
 &&(x\wedge y)\wedge \beta(z)=\alpha(x) \wedge (y\ast z),~  (x\vee y)\wedge \beta(z)=\alpha(x) \vee (y\wedge
 z),\\
 && (x\ast y)\vee \beta(z)=\alpha(x) \vee (y\vee z).
\end{eqnarray*}
Thus, $(Q, \wedge , \vee , \alpha , \beta )$ is a
BiHom-dendriform algebra, which, again by analogy with \cite{agui}, is denoted by $Q_v$ and is called the 
vertical BiHom-dendriform algebra associated to $Q$.
\end{remark}

From Corollary \ref{BHDBHA} we immediately obtain:
\begin{corollary}\label{BHQBHAs}
Let $(Q, \nwarrow, \swarrow, \nearrow, \searrow, \alpha , \beta )$
be a BiHom-quadri-algebra. Then $(Q, \ast, \alpha , \beta)$ is a
BiHom-associative algebra, where 
$x\ast y=x\searrow y+ x\nearrow y+ x\swarrow y+
x\nwarrow y$, for all $x, y\in Q$. 
\end{corollary}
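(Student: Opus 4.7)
The plan is to reduce the corollary to Corollary \ref{BHDBHA} via the horizontal BiHom-dendriform structure already extracted in the remark immediately preceding the statement. First, I would recall that by that remark, $(Q, \prec , \succ , \alpha , \beta)$ is a BiHom-dendriform algebra (the horizontal one $Q_h$), where $\prec$ and $\succ$ are defined by (\ref{BiHomqua1}) and (\ref{BiHomqua2}). Nothing more needs to be checked about the BiHom-dendriform axioms because that is precisely the content of the preceding remark.

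Next, I would simply apply Corollary \ref{BHDBHA} to $Q_h$. That corollary produces the BiHom-associative algebra $(Q, *, \alpha , \beta)$ with the multiplication $x*y := x \prec y + x \succ y$. It remains only to identify this $*$ with the one in the statement. Using (\ref{BiHomqua1}) and (\ref{BiHomqua2}),
\begin{eqnarray*}
x \prec y + x \succ y &=& (x \nwarrow y + x \swarrow y) + (x \nearrow y + x \searrow y) \\
&=& x \searrow y + x \nearrow y + x \swarrow y + x \nwarrow y,
\end{eqnarray*}
which is exactly (\ref{BiHomqua5}). So the two multiplications coincide, and the corollary follows.

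There is essentially no obstacle here: all the nontrivial work (verifying the horizontal BiHom-dendriform axioms from (\ref{BiHomqua11})--(\ref{BiHomqua15}), and proving that a BiHom-dendriform algebra yields a BiHom-associative algebra via $\prec + \succ$) has already been done in the preceding remark and in Corollary \ref{BHDBHA}. The proof is a one-line composition of these two facts, with only the bookkeeping check that the sum $\prec + \succ$ coincides with the four-term sum $\searrow + \nearrow + \swarrow + \nwarrow$.
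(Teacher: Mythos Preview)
Your proposal is correct and follows essentially the same approach as the paper: the paper simply states that the result follows immediately from Corollary~\ref{BHDBHA}, relying on the horizontal BiHom-dendriform structure $Q_h$ established in the preceding remark, which is precisely what you do (with the added bookkeeping that $\prec + \succ$ agrees with the four-term sum via (\ref{BiHomqua5})).
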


Just as in the classical situation in \cite{agui}, the tensor product of two BiHom-dendriform algebras becomes 
naturally a BiHom-quadri-algebra.
\begin{proposition} \label{TENSOR}
Let $(A, \prec , \succ , \alpha , \beta )$ and $(B, \prec' , \succ'
, \alpha' , \beta' )$ be two BiHom-dendriform algebras. On 
the linear space $A\ot B$ define bilinear operations by (for all
$ a_1, a_2\in A$ and $b_1, b_2\in B$):
\begin{eqnarray*}
&& (a_1\ot b_1)\nwarrow (a_2\ot b_2)=(a_1\prec a_2)\ot (b_1\prec'
b_2),\\
&& (a_1\ot b_1)\swarrow (a_2\ot b_2)=(a_1\prec a_2)\ot (b_1\succ'
b_2),\\
&& (a_1\ot b_1)\nearrow (a_2\ot b_2)=(a_1\succ a_2)\ot (b_1\prec'
b_2),\\
&& (a_1\ot b_1)\searrow (a_2\ot b_2)=(a_1\succ a_2)\ot (b_1\succ'
b_2).
\end{eqnarray*}
Then $(A\ot B, \nwarrow, \swarrow, \nearrow, \searrow, \alpha
\ot \alpha' , \beta\ot \beta' )$ is a BiHom-quadri-algebra.
\end{proposition}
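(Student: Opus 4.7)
The plan is to check each axiom of a BiHom-quadri-algebra on $A\ot B$ by reducing it to a tensor product of BiHom-dendriform axioms holding in $A$ and in $B$. The commutation relation (\ref{BiHomqua6}) and the multiplicativity conditions (\ref{BiHomqua7})-(\ref{BiHomqua10}) are immediate from (\ref{BiHomdend1})-(\ref{BiHomdend5}) applied to each factor, together with the fact that tensor products of commuting (resp.\ multiplicative) maps are commuting (resp.\ multiplicative) componentwise.

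The crucial preparatory step is to record the five derived operations $\succ,\prec,\vee,\wedge,*$ on $A\ot B$ in tensor-factored form. Writing $\diamond$ (resp.\ $\diamond'$) for the BiHom-associative multiplication $x\diamond y=x\prec y+x\succ y$ (resp.\ $b\diamond'b'=b\prec'b'+b\succ'b'$) produced on $A$ (resp.\ $B$) via Corollary \ref{BHDBHA}, one checks by a direct computation that
\begin{eqnarray*}
(a_1\ot b_1)\succ (a_2\ot b_2)&=&(a_1\succ a_2)\ot (b_1\diamond' b_2),\\
(a_1\ot b_1)\prec (a_2\ot b_2)&=&(a_1\prec a_2)\ot (b_1\diamond' b_2),\\
(a_1\ot b_1)\vee (a_2\ot b_2)&=&(a_1\diamond a_2)\ot (b_1\succ' b_2),\\
(a_1\ot b_1)\wedge (a_2\ot b_2)&=&(a_1\diamond a_2)\ot (b_1\prec' b_2),\\
(a_1\ot b_1)*(a_2\ot b_2)&=&(a_1\diamond a_2)\ot (b_1\diamond' b_2).
\end{eqnarray*}

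With these formulas in hand, each of the nine equations in (\ref{BiHomqua11})-(\ref{BiHomqua15}), applied to pure tensors, splits cleanly as a tensor product of two identities, one on each factor, each of which is precisely one of (\ref{BiHomdend6})-(\ref{BiHomdend8}). For instance, the first half of (\ref{BiHomqua11}) becomes
$((a_1\prec a_2)\prec\beta(a_3))\ot((b_1\prec'b_2)\prec'\beta'(b_3))=(\alpha(a_1)\prec(a_2\diamond a_3))\ot(\alpha'(b_1)\prec'(b_2\diamond'b_3))$,
which is (\ref{BiHomdend6}) on both factors; the second half of (\ref{BiHomqua11}) reduces to (\ref{BiHomdend7}) on the $A$-factor and (\ref{BiHomdend6}) on the $B$-factor. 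Axiom (\ref{BiHomqua15}) similarly collapses to (\ref{BiHomdend8}) on both factors, and each remaining axiom matches a specific pair drawn from (\ref{BiHomdend6})-(\ref{BiHomdend8}).

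No conceptual difficulty arises; the main obstacle is purely combinatorial bookkeeping, namely verifying for each of the nine quadri-algebra equations which pair of BiHom-dendriform axioms the two tensor factors correspond to. Once the five tensor-factored formulas above are established, the proof proceeds by the same pattern as the classical Aguiar-Loday argument in \cite{agui}, with the structure maps $\alpha,\beta,\alpha',\beta'$ tracked formally and absorbed into each factor without further interaction.
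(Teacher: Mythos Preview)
Your proof is correct and follows essentially the same approach as the paper: after noting that (\ref{BiHomqua6})--(\ref{BiHomqua10}) are immediate, the paper computes exactly the same tensor-factored formulas for the five derived operations (using $*,*'$ where you use $\diamond,\diamond'$) and then verifies (\ref{BiHomqua11}) explicitly, leaving the remaining axioms to the reader. Your identification of which pair of BiHom-dendriform axioms each quadri-algebra axiom reduces to is accurate and in fact slightly more explicit than the paper's presentation.
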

\begin{proof}
The relations (\ref{BiHomqua6})-(\ref{BiHomqua10}) are obvious. 
We denote by $ \succ _{\ot }, \prec _{\ot }, \vee _{\ot }, \wedge _{\ot }, \ast _{\ot }$ 
the operations defined on $A\ot B$ by (\ref{BiHomqua1})-(\ref{BiHomqua5}) corresponding to
 the operations $\nwarrow, \swarrow, \nearrow, \searrow $ defined above. One can easily see that, 
for all
$ a_1, a_2\in A$ and $b_1, b_2\in B$, we have 
\begin{eqnarray*}
&& (a_1\ot b_1)\prec_{\ot} (a_2\ot b_2)=(a_1\prec a_2)\ot (b_1\ast'
b_2),\\
&& (a_1\ot b_1)\succ_{\ot} (a_2\ot b_2)=(a_1\succ a_2)\ot (b_1\ast'
b_2),\\
&& (a_1\ot b_1)\wedge_{\ot} (a_2\ot b_2)=(a_1\ast a_2)\ot (b_1\prec'
b_2),\\
&& (a_1\ot b_1)\vee_{\ot} (a_2\ot b_2)=(a_1\ast a_2)\ot (b_1\succ'
b_2),\\
&& (a_1\ot b_1)\ast_{\ot} (a_2\ot b_2)=(a_1\ast a_2)\ot (b_1\ast'
b_2).
\end{eqnarray*}
Now we prove (\ref{BiHomqua11}) and leave the rest to the reader:\\[2mm]
${\;\;\;\;\;\;\;\;\;\;\;}$
$[(a_1\ot b_1)\nwarrow (a_2\ot b_2)]\nwarrow (\beta\ot
\beta')(a_3\ot b_3)$
\begin{eqnarray*}
&=& [(a_1\prec a_2)\ot (b_1\prec' b_2)]\nwarrow (\beta(a_3)\ot
\beta'(b_3))\\
&=&((a_1\prec a_2)\prec \beta(a_3))\ot ((b_1\prec' b_2)\prec'
\beta'(b_3))\\
&\overset{(\ref{BiHomdend6})}{=}& (\alpha(a_1)\prec(a_2\ast a_3))\ot (\alpha'(b_1)\prec'(b_2\ast'
b_3))\\
&=& (\alpha(a_1)\ot \alpha'(b_1))\nwarrow [(a_2\ast a_3)\ot
(b_2\ast' b_3)]\\
&=& (\alpha\ot \alpha')(a_1\ot b_1)\nwarrow [(a_2\ot b_2)\ast_{\ot}
(a_3\ot b_3)],
\end{eqnarray*}
${\;\;\;\;\;\;\;\;\;\;\;}$
$[(a_1\ot b_1)\nearrow(a_2\ot b_2)]\nwarrow (\beta\ot
\beta')(a_3\ot b_3)$
\begin{eqnarray*}
&=& [(a_1\succ a_2)\ot (b_1\prec' b_2)]\nwarrow (\beta(a_3)\ot
\beta'(b_3))\\
&=&((a_1\succ a_2)\prec \beta(a_3))\ot ((b_1\prec' b_2)\prec'
\beta'(b_3))\\
&\overset{(\ref{BiHomdend6}),\;(\ref{BiHomdend7})}{=}& 
(\alpha(a_1)\succ(a_2\prec a_3))\ot (\alpha'(b_1)\prec'(b_2\ast'
b_3))\\
&=& (\alpha(a_1)\ot \alpha'(b_1))\nearrow [(a_2\prec a_3)\ot
(b_2\ast'
b_3)]\\
&=& (\alpha\ot \alpha')(a_1\ot b_1)\nearrow [(a_2\ot b_2)\prec_{\ot}
(a_3\ot b_3)],
\end{eqnarray*}
finishing the proof.
\end{proof}
\section{Rota-Baxter operators}\label{sec4}
\setcounter{equation}{0}
In this section, we study Rota-Baxter structures in the case of BiHom-type algebras. 
The definition of a Rota-Baxter operator for a BiHom-type algebra is exactly the same as in Definition \ref{RB-DEF}. 
Notice that it  only uses the multiplication but not the structure maps. 
We first construct some examples, then provide some constructions and properties.
\begin{example} We consider the following 2-dimensional BiHom-associative algebra (introduced in \cite{gmmp}), 
where the multiplication and the structure maps   $\alpha$, $%
\beta $  are defined, with respect to a basis  $\{e_1,e_2\}$,  by
\begin{align*}
& \mu(e_1,e_1)=e_1, && \mu(e_1,e_2)= be_1+(1-a) e_2, \\
& \mu(e_2,e_1)=\frac{b(1-a)}{a}e_1+ae_2,  && \mu(e_2,e_2)=\frac{ b}{a}
e_2,\\
& \alpha(e_1)=e_1,  && \alpha(e_2)=\frac{b(1-a)}{a}e_1+a e_2, \\
& \beta(e_1)=e_1, && \beta(e_2)=be_1+(1-a) e_2, 
\end{align*}
where $a,b$ are parameters in ${\Bbbk}$ with  $a\neq 0$.

\begin{itemize}
\item They carry  Rota-Baxter operators $R$ of weight 0, defined with respect to the basis by 
$$R(e_1)=0, \ \ 
R(e_2)= r e_1,
$$
or
$$R(e_1)=r_1 e_1+r_2 e_2, \ \ 
R(e_2)=-\frac{ r_1^2}{r_2}e_1-r_1e_2.
$$

\item They carry  the following Rota-Baxter operators $R$ of weight 1, defined with respect to the basis by 
$$R(e_1)=-e_1, \ \ 
R(e_2)= r e_1,
$$
or
$$R(e_1)=0, \ \ 
R(e_2)= r e_1-e_2,
$$
or
$$R(e_1)=-e_1, \ \ 
R(e_2)= -e_2,
$$
or
$$R(e_1)=r_1 e_1+r_2 e_2, \ \ 
R(e_2)=-\frac{ r_1(r_1+1)}{r_2}e_1-(r_1+1)e_2. 
$$
In these formulae, $r, r_1,r_2$ are parameters in ${\Bbbk}$ with $r_2\neq 0$. 
\end{itemize}
\end{example}
\begin{proposition}
Let $A$ be a linear space, $\mu :A\ot A\rightarrow A$ a linear multiplication on $A$, let $R:A\rightarrow A$ be a Rota-Baxter operator of weight $\lambda $ for $(A, \mu )$ and $\alpha , \beta :A\rightarrow A$ two linear maps such that
$R\circ \alpha =\alpha \circ R$ and $R\circ \beta =\beta \circ R$. Define a new multiplication on $A$ by $x*y=\alpha (x)\beta (y)$, for all $x, y\in A$.
Then $R$ is also a Rota-Baxter operator of weight
$\lambda $ for $(A, *)$.
In particular, if $(A, \mu )$ is associative and $\alpha , \beta $ are commuting algebra endomorphisms, then
$R$ is a Rota-Baxter operator of weight $\lambda $ for the BiHom-associative algebra
$A_{(\alpha , \beta )}=
(A, \mu \circ (\alpha \ot \beta ), \alpha , \beta )$.
\end{proposition}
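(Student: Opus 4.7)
The plan is to verify the Rota-Baxter identity for $(A, *)$ by a direct substitution, exploiting only two ingredients: the hypothesis that $R$ commutes with $\alpha$ and $\beta$, and the Rota-Baxter identity \eqref{RBrel} for $(A, \mu)$. The main part is the first claim; the ``in particular'' statement then follows immediately from the fact, recalled at the end of the preliminaries, that if $(A, \mu)$ is associative and $\alpha, \beta$ are commuting algebra endomorphisms then $A_{(\alpha, \beta)} = (A, \mu \circ (\alpha \ot \beta), \alpha, \beta)$ is BiHom-associative, since the underlying multiplication of $A_{(\alpha,\beta)}$ is precisely $*$.

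So the key step is the following computation, for all $x, y \in A$. Starting from the left-hand side of the Rota-Baxter condition for $*$, I would use the definition of $*$ and then push $\alpha$ and $\beta$ through $R$:
\begin{eqnarray*}
R(x) * R(y) &=& \alpha(R(x))\, \beta(R(y)) \;=\; R(\alpha(x))\, R(\beta(y)).
\end{eqnarray*}
At this point, $(A, \mu)$ is just a linear multiplication with a Rota-Baxter operator of weight $\lambda$, so I can apply \eqref{RBrel} to the elements $\alpha(x)$ and $\beta(y)$, giving
\begin{eqnarray*}
R(\alpha(x))\, R(\beta(y)) &=& R\bigl( R(\alpha(x))\,\beta(y) + \alpha(x)\, R(\beta(y)) + \lambda\, \alpha(x)\,\beta(y)\bigr).
\end{eqnarray*}
Then, using the commutation hypotheses once more to replace $R(\alpha(x))$ by $\alpha(R(x))$ and $R(\beta(y))$ by $\beta(R(y))$, and recognizing the definition of $*$ in each of the three summands, this equals
\begin{eqnarray*}
R\bigl( \alpha(R(x))\,\beta(y) + \alpha(x)\,\beta(R(y)) + \lambda\, \alpha(x)\,\beta(y)\bigr) &=& R\bigl( R(x) * y + x * R(y) + \lambda\, x * y\bigr),
\end{eqnarray*}
which is exactly the Rota-Baxter identity of weight $\lambda$ for $(A, *)$.

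There is no real obstacle: the whole proof is a three-line calculation whose only subtlety is remembering to push $\alpha$ and $\beta$ across $R$ in both directions (once to move them out of the way so that \eqref{RBrel} can be applied to $\alpha(x)$ and $\beta(y)$, and once to put them back inside $R$ so that the $*$-multiplications reassemble). Note that no associativity assumption on $\mu$ and no multiplicativity of $\alpha, \beta$ are needed for the first assertion; these hypotheses are used only in the ``in particular'' clause, where they guarantee that $(A, *, \alpha, \beta)$ is genuinely a BiHom-associative algebra.
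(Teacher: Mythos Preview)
Your proof is correct and is essentially identical to the paper's own proof: the same direct computation using the commutation of $R$ with $\alpha$ and $\beta$, application of \eqref{RBrel} to $\alpha(x)$ and $\beta(y)$, and reassembly into the $*$-multiplication. Your additional remarks about which hypotheses are used where are accurate and not present in the paper, but the argument itself matches line for line.
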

\begin{proof} We compute:
\begin{eqnarray*}
R(x)*R(y)&=&\alpha (R(x))\beta (R(y))=R(\alpha (x))R(\beta (y))\\
&=&R(R(\alpha (x))\beta (y)+\alpha (x)R(\beta (y))+\lambda \alpha (x)\beta (y))\\
&=&R(\alpha (R(x))\beta (y)+\alpha (x)\beta (R(y))+\lambda \alpha (x)\beta (y))\\
&=&R(R(x)*y+x*R(y)+\lambda x*y),
\end{eqnarray*}
finishing the proof.
\end{proof}
\begin{proposition}\label{BHRB}
Let $(A, \mu , \alpha , \beta )$ be a BiHom-associative algebra and $R:A\rightarrow A$ a Rota-Baxter operator
of weight $\lambda $ commuting with $\alpha $ and $\beta $. Define the operations
$\prec $, $\succ $ and $\cdot $ on $A$ by
$x\prec y=xR(y)$, $x\succ y=R(x)y$ and $x\cdot y=\lambda xy$, 
for all $x, y\in A$. Then $(A, \prec , \succ , \cdot , \alpha , \beta )$ is a BiHom-tridendriform algebra.
\end{proposition}
\begin{proof}
We only prove (\ref{BiHomtridend8})-(\ref{BiHomtridend10}) and leave the rest to the reader. We have:\\[2mm]
${\;\;\;}$$(x\prec y)\prec \beta (z)-\alpha (x)\prec (y\prec z)-\alpha (x)\prec (y\succ z)-\alpha (x)\prec (y\cdot z)$
\begin{eqnarray*}
&=&(xR(y))\prec \beta (z)-\alpha (x)\prec (yR(z))-\alpha (x)\prec (R(y)z)-\alpha (x)\prec (\lambda yz)\\
&=&(xR(y))R(\beta (z))-\alpha (x)R(yR(z))-\alpha (x)R(R(y)z)-\alpha (x)R(\lambda yz)\\
&=&(xR(y))\beta (R(z))-\alpha (x)R(yR(z))-\alpha (x)R(R(y)z)-\alpha (x)R(\lambda yz)\\
&=&\alpha (x)(R(y)R(z))-\alpha (x)R(yR(z)+R(y)z+\lambda yz)=0,
\end{eqnarray*}
${\;\;\;}$$(x\succ y)\prec \beta (z)-\alpha (x)\succ (y\prec z)$
\begin{eqnarray*}
&=&(R(x)y)\prec \beta (z)-\alpha (x)\succ (yR(z))
=(R(x)y)R(\beta (z))-R(\alpha (x))(yR(z))\\
&=&(R(x)y)\beta (R(z))-\alpha (R(x))(yR(z))
=\alpha (R(x))(yR(z))-\alpha (R(x))(yR(z))=0,
\end{eqnarray*}
${\;\;\;}$$\alpha (x)\succ (y\succ z)-(x\prec y)\succ \beta (z)-(x\succ y)\succ \beta (z)-(x\cdot y)\succ \beta (z)$
\begin{eqnarray*}
&=&\alpha (x)\succ (R(y)z)-(xR(y))\succ \beta (z)-(R(x)y)\succ \beta (z)-(\lambda xy)\succ \beta (z)\\
&=&R(\alpha (x))(R(y)z)-R(xR(y))\beta (z)-R(R(x)y)\beta (z)-R(\lambda xy)\beta (z)\\
&=&\alpha (R(x))(R(y)z)-(R(x)R(y)\beta (z)
=(R(x)R(y))\beta (z)-(R(x)R(y)\beta (z)=0,
\end{eqnarray*}
finishing the proof.
\end{proof}
\begin{corollary} \label{BHRBzero}
Let $(A, \mu , \alpha , \beta )$ be a BiHom-associative algebra and $R:A\rightarrow A$ a Rota-Baxter operator
of weight 0 commuting with $\alpha $ and $\beta $. Define operations
$\prec $ and $\succ $ on $A$ by
$x\prec y=xR(y)$ and $x\succ y=R(x)y$, 
for all $x, y\in A$. Then $(A, \prec , \succ , \alpha , \beta )$ is a BiHom-dendriform algebra.
\end{corollary}

As a consequence of Propositions \ref{DTDD} and \ref{BHRB}, we obtain:
\begin{proposition}
Let $(A, \mu , \alpha , \beta )$ be a BiHom-associative algebra and $R:A\rightarrow A$ a Rota-Baxter
operator of weight $\lambda $ such that $R\circ \alpha =\alpha \circ R$ and $R\circ \beta =\beta \circ R$.
Define operations $\prec '$ and $\succ '$ on $A$ by
$x\prec 'y=xR(y)+\lambda xy$ and $x\succ 'y=R(x)y$,
for all $x, y\in A$. Then $(A, \prec ', \succ ', \alpha , \beta )$ is a BiHom-dendriform algebra.
\end{proposition}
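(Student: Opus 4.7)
The plan is to derive this proposition as an immediate consequence of the two results explicitly invoked just above its statement, namely \prref{DTDD} and \prref{BHRB}; there is essentially no computation to do, only a matter of lining up the definitions.

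First I would apply \prref{BHRB} to the given data $(A,\mu,\alpha,\beta)$ and $R$. Because $R$ has weight $\lambda$ and commutes with both structure maps $\alpha$ and $\beta$, the hypotheses of \prref{BHRB} are satisfied verbatim. The conclusion provides a BiHom-tridendriform algebra $(A,\prec,\succ,\cdot,\alpha,\beta)$ whose three operations are precisely
\begin{eqnarray*}
x\prec y=xR(y),\qquad x\succ y=R(x)y,\qquad x\cdot y=\lambda xy.
\end{eqnarray*}

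Next I would feed this BiHom-tridendriform algebra into \prref{DTDD}. That proposition states that any BiHom-tridendriform algebra $(A,\prec,\succ,\cdot,\alpha,\beta)$ yields a BiHom-dendriform algebra $(A,\prec',\succ',\alpha,\beta)$ by defining $x\prec'y=x\prec y+x\cdot y$ and $x\succ'y=x\succ y$. Substituting the three operations produced in the previous step, I obtain
\begin{eqnarray*}
x\prec'y=xR(y)+\lambda xy\qquad\text{and}\qquad x\succ'y=R(x)y,
\end{eqnarray*}
which coincides exactly with the operations defined in the statement. Hence $(A,\prec',\succ',\alpha,\beta)$ is a BiHom-dendriform algebra, as required.

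There is no genuine obstacle here: the only thing to verify is that the combination of the two named propositions reproduces the stated operations $\prec'$ and $\succ'$, which is a direct check. In particular, no further use of the Rota-Baxter identity or of BiHom-associativity is needed beyond what already powers \prref{BHRB} and \prref{DTDD}.
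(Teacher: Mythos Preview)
Your proposal is correct and matches the paper's approach exactly: the paper simply states this proposition as a consequence of \prref{DTDD} and \prref{BHRB} without further argument, and your write-up makes that derivation explicit.
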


As a consequence of Propositions \ref{BHTDBH} and \ref{BHRB} we obtain:
\begin{corollary}\label{RBBHAA}
Let $(A, \mu , \alpha , \beta )$ be a BiHom-associative algebra and $R:A\rightarrow A$ a Rota-Baxter operator
of weight $\lambda $ such that $R\circ \alpha =\alpha \circ R$ and $R\circ \beta =\beta \circ R$. If we define
on $A$ a new multiplication by
$x*y=xR(y)+R(x)y+\lambda xy$,
for all $x, y\in A$, then $(A, * , \alpha , \beta )$ is a BiHom-associative algebra.
\end{corollary}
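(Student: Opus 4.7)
The plan is to combine Propositions \ref{BHTDBH} and \ref{BHRB} in a direct two-step argument, so that no further computation involving the Rota-Baxter identity or the BiHom-associativity axiom is required.

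First, I would invoke Proposition \ref{BHRB}. Its hypotheses match exactly those of the corollary: $(A,\mu,\alpha,\beta)$ is BiHom-associative, $R$ is a Rota-Baxter operator of weight $\lambda$ for $(A,\mu)$, and $R$ commutes with both structure maps $\alpha$ and $\beta$. Therefore, defining
\begin{eqnarray*}
x\prec y=xR(y),\qquad x\succ y=R(x)y,\qquad x\cdot y=\lambda xy,
\end{eqnarray*}
for all $x,y\in A$, the 6-tuple $(A,\prec,\succ,\cdot,\alpha,\beta)$ is a BiHom-tridendriform algebra.

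Next, I would apply Proposition \ref{BHTDBH} to this BiHom-tridendriform algebra. That proposition asserts that the operation $x*y:=x\prec y+x\succ y+x\cdot y$ makes $(A,*,\alpha,\beta)$ into a BiHom-associative algebra. Substituting our three operations, this $*$ becomes
\begin{eqnarray*}
x*y=xR(y)+R(x)y+\lambda xy,
\end{eqnarray*}
which is precisely the multiplication defined in the statement of the corollary. This finishes the proof.

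There is no real obstacle to surmount: the corollary is stated precisely so as to be an immediate consequence of the two earlier results, and all the work (verifying the seven BiHom-tridendriform axioms using the Rota-Baxter relation and BiHom-associativity, then collapsing the tridendriform structure down to a BiHom-associative one) has already been carried out in the proofs of Propositions \ref{BHRB} and \ref{BHTDBH}.
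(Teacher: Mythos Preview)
Your proof is correct and follows exactly the approach indicated in the paper, which simply states the corollary ``as a consequence of Propositions \ref{BHTDBH} and \ref{BHRB}'' without writing out any further argument. Your two-step application of these propositions is precisely what is intended.
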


Inspired by \cite{agui}, we introduce the following concept:
\begin{definition}
Let $(D, \prec , \succ , \alpha , \beta )$ be a BiHom-dendriform
algebra. A Rota-Baxter operator of weight 0 on $D$ is a linear map $R:
D\rightarrow D$ such that $R\circ \alpha =\alpha \circ R$, 
$R\circ \beta =\beta \circ R$ and the following conditions are satisfied, for all $x, y\in D$:
\begin{eqnarray}
&& R(x)\succ R(y)=R(x\succ R(y)+ R(x)\succ y),  \label{baxter1}\\
&& R(x)\prec R(y)=R(x\prec R(y)+ R(x)\prec y).  \label{baxter2}
\end{eqnarray}
\end{definition}

From Corollary \ref{BHDBHA} we know that $(D, *, \alpha , \beta )$ is a
BiHom-associative algebra. Adding the equations (\ref{baxter1}) and
(\ref{baxter2}) we obtain that $R$ is also a Rota-Baxter operator of
weight 0 for $(D, \ast)$:
\begin{eqnarray*}
&& R(x)\ast R(y)=R(x\ast R(y)+ R(x)\ast y).  \label{baxter3}
\end{eqnarray*}
\begin{proposition} \label{operation}
Let $(D, \prec , \succ , \alpha , \beta )$ be a BiHom-dendriform
algebra and $R: D\rightarrow D$ a Rota-Baxter operator of weight 0 for $D$. 
Define new operations on $D$ by
\begin{eqnarray*}
x\searrow_R y=R(x)\succ y, ~ x\nearrow_R y=x\succ R(y), ~
x\swarrow_R y=R(x)\prec y ~and ~ x\nwarrow_R y=x\prec R(y).
\end{eqnarray*}
Then $(D, \nwarrow_R, \swarrow_R, \nearrow_R, \searrow_R, \alpha ,
\beta )$ is a BiHom-quadri-algebra.
\end{proposition}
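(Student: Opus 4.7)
The plan is to verify each defining condition of a BiHom-quadri-algebra in turn. Condition (\ref{BiHomqua6}) is immediate, and the multiplicativity of $\alpha$ and $\beta$ with respect to the four new operations, namely (\ref{BiHomqua7})--(\ref{BiHomqua10}), follows routinely from (\ref{BiHomdend3}), (\ref{BiHomdend5}) combined with $R\alpha=\alpha R$ and $R\beta=\beta R$; for example, $\alpha(x\nwarrow_R y)=\alpha(x\prec R(y))=\alpha(x)\prec \alpha(R(y))=\alpha(x)\prec R(\alpha(y))=\alpha(x)\nwarrow_R \alpha(y)$, and the other seven checks are analogous.

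The technical core is the nine structural identities (\ref{BiHomqua11})--(\ref{BiHomqua15}). The key preparatory observation, obtained by regrouping (\ref{baxter1}) and (\ref{baxter2}), is that
\begin{eqnarray*}
&& R(x)\succ R(y)=R(x\succ_R y),\qquad R(x)\prec R(y)=R(x\prec_R y), \\
&& R(x)\ast R(y)=R(x\ast_R y),
\end{eqnarray*}
where $\succ_R,\prec_R,\ast_R$ denote the operations on $D$ produced from $\nwarrow_R,\swarrow_R,\nearrow_R,\searrow_R$ by the formulae (\ref{BiHomqua1})--(\ref{BiHomqua5}). These identities allow us to push $R$ past the new ``sum'' operations whenever it appears applied to a composite on the inside of an expression.

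The uniform strategy for each of (\ref{BiHomqua11})--(\ref{BiHomqua15}) is then: unfold both sides using the definitions of $\searrow_R,\nearrow_R,\swarrow_R,\nwarrow_R$; migrate $R$ past $\alpha$ and $\beta$ via the commutation hypotheses; apply the preparatory identities to collapse a composite $R(\cdots)$ into a product of the form $R(u)\succ R(v)$ or $R(u)\prec R(v)$; and finally invoke the matching BiHom-dendriform axiom (\ref{BiHomdend6}), (\ref{BiHomdend7}), or (\ref{BiHomdend8}) applied to arguments already prefixed by $R$ in the appropriate slots. As a representative sample, for (\ref{BiHomqua15}) one computes
\begin{eqnarray*}
(x\ast_R y)\searrow_R \beta(z)&=&R(x\ast_R y)\succ \beta(z)=(R(x)\ast R(y))\succ \beta(z)\\
&\overset{(\ref{BiHomdend8})}{=}&\alpha(R(x))\succ (R(y)\succ z)=\alpha(x)\searrow_R (y\searrow_R z).
\end{eqnarray*}
The only real obstacle is bookkeeping: each of the nine axioms must be matched to the correct combination of Rota-Baxter rewrite and dendriform axiom, the choice depending on which of $\succ_R,\prec_R,\vee_R,\wedge_R,\ast_R$ appears on which side of the identity under consideration.
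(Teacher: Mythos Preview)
Your proposal is correct and follows essentially the same approach as the paper: unfold the new operations, commute $R$ with $\alpha,\beta$, apply the Rota-Baxter identities (\ref{baxter1})--(\ref{baxter2}) and the BiHom-dendriform axioms (\ref{BiHomdend6})--(\ref{BiHomdend8}), then refold. The paper checks (\ref{BiHomqua12}) as its representative sample rather than (\ref{BiHomqua15}), and it applies the dendriform axiom before the Rota-Baxter rewrite rather than packaging the latter as your preparatory identities $R(x)\succ R(y)=R(x\succ_R y)$ etc.\ (which the paper records separately in the Remark following the proof), but the content is the same.
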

\begin{proof}
We only check (\ref{BiHomqua12}) and leave the rest to the reader. 
We denote by $ \succ _{R }, \prec _{R}, \vee _{R}, \wedge _{R}, \ast _{R}$ 
the operations defined on $D$ by (\ref{BiHomqua1})-(\ref{BiHomqua5}) corresponding to
 the operations $\nwarrow _R, \swarrow _R, \nearrow _R, \searrow _R$, which are then defined by 
\begin{eqnarray*}
x\prec_R y&=&x\nwarrow_R y+x\swarrow_R y=x\prec R(y)+R(x)\prec y, \label{derR1}\\
x\succ_R y&=&x\searrow_R y+x\nearrow_R y=R(x)\succ y+x\succ R(y), \label{derR2}\\
x\wedge_R y&=&x\nearrow_R y+x\nwarrow_R y=x\succ R(y)+x\prec R(y), \label{derR3}\\
x\vee_R y&=&x\searrow_R y+x\swarrow_R y=R(x)\succ y+R(x)\prec y, \label{derR4} \\
x\ast _Ry&=&x\nwarrow_R y+x\swarrow_R y+x\searrow_R y+x\nearrow_R y \nonumber \\
&=&
x\prec R(y)+R(x)\prec y+R(x)\succ y+x\succ R(y).
\end{eqnarray*}

For all $x, y, z\in D$ we have:
\begin{eqnarray*}
(x\wedge_R y)\nearrow_R \beta(z)&=&(x\nwarrow_R y+ x\nearrow_R y)\succ R(\beta(z))\\
&=&(x\prec R(y)+ x\succ R(y))\succ \beta(R(z))\\
&\overset{(\ref{BiHomdend8})}{=}&\alpha(x)\succ(R(y)\succ R(z))\\
&\overset{(\ref{baxter1})}{=}&\alpha(x)\succ R(y\succ R(z)+ R(y)\succ z)\\
&=& \alpha(x)\succ R(y\nearrow_R z+y\searrow_R z)
=\alpha(x)\nearrow_R (y\succ_R z),
\end{eqnarray*}
\begin{eqnarray*}
(x\swarrow_R y)\nwarrow_R \beta(z)&=&(R(x)\prec y)\prec R(\beta(z))
=(R(x)\prec y)\prec \beta(R(z))\\
&\overset{(\ref{BiHomdend6})}{=}&\alpha(R(x))\prec (y\prec R(z)+ y\succ R(z))\\
&=&R(\alpha(x))\prec (y\nwarrow_R z+ y\nearrow_R z)
= \alpha(x)\swarrow_R (y\wedge_R z),
\end{eqnarray*}
 as needed.
\end{proof}
\begin{remark} In the setting of Proposition \ref{operation}, the axioms (\ref{baxter1}) and (\ref{baxter2}) can be
rewritten as 
$R(x)\succ R(y)=R(x\succ_R y)$ and $R(x)\prec R(y)=R(x\prec_R y)$. 
Thus, $R$ is a morphism of BiHom-dendriform algebras from $D_h=(D,
\prec_R, \succ_R, \alpha , \beta)$ to $(D, \prec, \succ,
\alpha , \beta)$.

On the other hand, if we denote by $(D, \ast , \alpha , \beta )$ the BiHom-associative algebra obtained 
from $D$ as in Corollary \ref{BHDBHA}, it is obvious that we have 
$x\wedge_R y=x\ast R(y)$ and 
$x\vee_R y=R(x)\ast y$, for all $x, y\in D$. Thus, the BiHom-dendriform algebra structure 
obtained on $D$ by applying Corollary \ref{BHRBzero} for the Rota-Baxter operator $R$ on the 
BiHom-associative algebra $(D, \ast , \alpha , \beta )$ is exactly the vertical BiHom-dendriform algebra 
$D_v=(D, \wedge_R, \vee _R, \alpha , \beta)$ obtained from the BiHom-quadri-algebra 
$(D, \nwarrow_R, \swarrow_R, \nearrow_R, \searrow_R, \alpha ,
\beta )$.
\end{remark}

Similarly to the classical case in \cite{agui}, examples of BiHom-quadri-algebras are provided by pairs of commuting 
Rota-Baxter operators on BiHom-associative algebras. 
\begin{proposition} \label{pairRB}
Let $R$ and $P$ be a pair of commuting Rota-Baxter operators of
weight 0 on a BiHom-associative algebra $(A, \mu, \alpha , \beta)$ such that 
$R\circ \alpha =\alpha \circ R$, $R\circ \beta =\beta \circ R$,
$P\circ \alpha =\alpha \circ P$ and $P\circ \beta =\beta \circ P$.
Then $P$ is a Rota-Baxter operator of weight 0 on the
BiHom-dendriform algebra $(A, \prec_R, \succ_R, \alpha , \beta)$
corresponding to $R$ as in Corollary \ref{BHRBzero}.
\end{proposition}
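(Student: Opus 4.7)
The plan is to verify directly the two Rota-Baxter axioms (\ref{baxter1}) and (\ref{baxter2}) for $P$ viewed as an operator on the BiHom-dendriform algebra $(A, \prec_R, \succ_R, \alpha, \beta)$ constructed from $R$ via Proposition \ref{BHRBzero}. The compatibility $P\circ\alpha=\alpha\circ P$ and $P\circ\beta=\beta\circ P$ is part of the hypothesis, so only the two identities involving the dendriform operations require work.

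For axiom (\ref{baxter1}), I would unfold $P(x)\succ_R P(y)=R(P(x))\,P(y)$ and then use $R\circ P=P\circ R$ to rewrite it as $P(R(x))\,P(y)$. Applying the Rota-Baxter condition of weight $0$ for $P$ on $(A,\mu)$ to the pair $R(x),y$ gives
\[
P(R(x))\,P(y)=P\bigl(P(R(x))\,y+R(x)\,P(y)\bigr).
\]
Using $P\circ R=R\circ P$ once more inside the argument of the outer $P$, this reads $P\bigl(R(P(x))\,y+R(x)\,P(y)\bigr)$, which by the definition of $\succ_R$ is precisely $P(P(x)\succ_R y+x\succ_R P(y))$, as required.

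The verification of axiom (\ref{baxter2}) proceeds along exactly the same lines, starting from $P(x)\prec_R P(y)=P(x)\,R(P(y))=P(x)\,P(R(y))$, applying the Rota-Baxter identity for $P$ on $(A,\mu)$ to the pair $x,R(y)$, and then using $P\circ R=R\circ P$ to recognize the result as $P(P(x)\prec_R y+x\prec_R P(y))$.

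There is no real obstacle here: the argument is essentially a transport of the Rota-Baxter identity for $P$ on $(A,\mu)$ through the definitions $x\succ_R y=R(x)y$ and $x\prec_R y=xR(y)$, with the commutation $R\circ P=P\circ R$ as the only non-trivial ingredient that allows the two sides to line up. The associativity (or BiHom-associativity) of $\mu$ is not used, which reflects the fact that the statement is purely about how Rota-Baxter operators interact with the twisted multiplications, not about how those multiplications combine with $\alpha$ and $\beta$.
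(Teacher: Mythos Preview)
Your proof is correct and follows essentially the same route as the paper's own proof: both unfold the dendriform operations $\succ_R$ and $\prec_R$, use the commutation $R\circ P=P\circ R$ to put things in the form $P(\cdot)P(\cdot)$, apply the Rota-Baxter identity for $P$ on $(A,\mu)$, and then use the commutation once more to recognize the result in terms of $\succ_R$ and $\prec_R$.
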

\begin{proof}
We check the axioms (\ref{baxter1}) and (\ref{baxter2}). For all
$x, y\in A$, we have:
\begin{eqnarray*}
P(x)\succ_R P(y)&=& R(P(x))P(y)=P(R(x))P(y)\\
&\overset{(\ref{RBrel})}{=}&P(R(x)P(y)+P(R(x))y)=P(R(x)P(y)+R(P(x))y)\\
&=& P(x\succ_R P(y)+ P(x)\succ_R y),
\end{eqnarray*}
\begin{eqnarray*}
P(x)\prec_R P(y)&=& P(x)R(P(y))=P(x)P(R(y))\\
&\overset{(\ref{RBrel})}{=}&P(x P(R(y))+P(x)R(y))=P(x R(P(y))+P(x)R(y))\\
&=& P(x\prec_R P(y)+ P(x)\prec_R y),
\end{eqnarray*}
 as needed.
\end{proof}
\begin{corollary}\label{corpairRB}
In the setting of Proposition \ref{pairRB}, there exists 
a BiHom-quadri-algebra structure on the underlying linear 
space $(A, \alpha , \beta)$, with operations defined by
\begin{eqnarray*}
&& x\searrow y=P(x)\succ_R y=P(R(x))y=R(P(x))y,\\
&& x\nearrow y=x\succ_R P(y)=R(x)P(y),\\
&& x\swarrow y=P(x)\prec_R y=P(x)R(y),\\
&& x\nwarrow y=x\prec_R P(y)=xR(P(y))=xP(R(y)). 
\end{eqnarray*}
In particular, $(A, \ast , \alpha , \beta )$ is a BiHom-associative algebra, where 
$a\ast b=R(P(a))b+R (a)P (b)+P(a)R(b)+aR(P(b))$, for all $a, b\in A$. 
\end{corollary}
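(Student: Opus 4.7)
The proof will be obtained by chaining together three of the propositions just established. First, I would apply Proposition \ref{BHRBzero} to the Rota-Baxter operator $R$ on the BiHom-associative algebra $(A, \mu, \alpha, \beta)$; this yields a BiHom-dendriform algebra $(A, \prec_R, \succ_R, \alpha, \beta)$ with $x \prec_R y = xR(y)$ and $x \succ_R y = R(x)y$. Next, by Proposition \ref{pairRB}, whose hypotheses are exactly the commutation assumptions on $P$, $R$, $\alpha$, $\beta$, the operator $P$ is a Rota-Baxter operator of weight $0$ for this new BiHom-dendriform algebra.

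The third step is to apply Proposition \ref{operation} to $(A, \prec_R, \succ_R, \alpha, \beta)$ with $P$ playing the role of the Rota-Baxter operator on the BiHom-dendriform algebra. This endows $A$ with a BiHom-quadri-algebra structure whose four operations are, directly from the definition in Proposition \ref{operation},
\begin{align*}
x \searrow y &= P(x) \succ_R y = R(P(x))y, \\
x \nearrow y &= x \succ_R P(y) = R(x)P(y), \\
x \swarrow y &= P(x) \prec_R y = P(x)R(y), \\
x \nwarrow y &= x \prec_R P(y) = xR(P(y)).
\end{align*}
The alternative expressions $R(P(x))y = P(R(x))y$ and $xR(P(y)) = xP(R(y))$ stated in the corollary follow at once from the hypothesis $R \circ P = P \circ R$.

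The final BiHom-associative algebra structure is then read off from Corollary \ref{BHQBHAs}: the operation $a \ast b = a \searrow b + a \nearrow b + a \swarrow b + a \nwarrow b$ computes to precisely $R(P(a))b + R(a)P(b) + P(a)R(b) + aR(P(b))$, matching the claimed formula. Since every step is a direct invocation of a previously proven result and the only computational content consists of the commutativity rewrites afforded by $RP = PR$, no genuine obstacle arises; the novelty of the statement lies in organizing these constructions into a single package rather than in any new verification.
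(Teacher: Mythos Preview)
Your proposal is correct and follows exactly the same route as the paper: the paper's proof consists of the single sentence ``Apply Proposition \ref{operation} to the Rota-Baxter operator $P$ of weight $0$ on the BiHom-dendriform algebra $(A, \prec_R, \succ_R, \alpha , \beta)$,'' which implicitly relies on Propositions \ref{BHRBzero} and \ref{pairRB} just as you do. Your version simply spells out the chain of references and the final computation with Corollary \ref{BHQBHAs} more explicitly.
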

\begin{proof}
Apply Proposition \ref{operation} to the Rota-Baxter operator $P$ of
weight 0 on the BiHom-dendriform algebra $(A, \prec_R, \succ_R,
\alpha , \beta)$.
\end{proof}
\section{Free Rota-Baxter BiHom-associative algebra, categories and functors}\label{sec6}
\setcounter{equation}{0}
We define the free BiHom-nonassociative algebra and free Rota-Baxter BiHom-associative algebra, generalizing the construction provided first by D. Yau for Hom-nonassociative algebras in \cite{YauEnv} and then extended to Rota-Baxter Hom-associative algebras in \cite{makhloufyau}. A variation, in the multiplicative case, was presented in \cite{CLG-M-T}. 
See also \cite{martini} for the free  BiHom-nonassociative algebra construction.

\subsection{BiHom-modules and BiHom-nonassociative algebras}

 We denote by  $\BiHomMod$  the category of BiHom-modules. An object in this category is a triple $(M,\alpha_M,\beta_M)$ consisting of a $\Bbbk$-linear space $M$ and  linear maps $\alpha_M \colon M \to M$ and $\beta_M \colon M \to M$ satisfying $\beta_M \circ \alpha_M= \alpha_M\circ\beta_M$.  A morphism $f \colon (M,\alpha_M,\beta_M) \longrightarrow (N,\alpha_N,\beta_M)$ of BiHom-modules is a linear map $f \colon M \to N$ such that $f \circ \alpha_M = \alpha_N \circ f$ and $f \circ \beta_M = \beta_N \circ f$.
 
There is a forgetful functor $U : \BiHomMod \rightarrow  $$\;_{\Bbbk }\mathcal{M}$ that sends a BiHom-module 
$(M,\alpha ,\beta)$ to the linear space $M$, forgetting about the maps $\alpha$ and $\beta$. Conversely, using a similar construction as in \cite{YauEnv}, one may construct a free BiHom-module associated to a linear space. Hence, we have an adjunction.
 
We call  BiHom-nonassociative algebra a quadruple $(A, \mu, \alpha, \beta)$, where $(A,\alpha, \beta)$ is a BiHom-module and 
$\mu  : A \otimes  A \rightarrow A$ is a linear map, called the multiplication of $A$, for which 
$\alpha$ and $\beta$ are algebra maps.

Let $(A, \mu, \alpha, \beta)$ and $(A', \mu', \alpha', \beta')$ be two BiHom-nonassociative algebras. A morphism of 
BiHom-nonassociative algebras from $(A, \mu, \alpha, \beta)$ to $(A', \mu', \alpha', \beta')$ is a BiHom-module morphism $f : A \rightarrow A'$ such that $f\circ \mu=\mu \circ (f\otimes f)$.
 We denote the category of BiHom-nonassociative algebras by $\BiHomNonAs$.

\subsection{B-augmented and RB-augmented planar trees}
The construction of  the  free BiHom-nonassociative algebra and respectively the free Rota-Baxter BiHom-nonassociative algebra involve planar trees and B-augmented trees (respectively RB-augmented trees) 
in order to freely generate products (for $\mu$) and images of $R$. 

For any natural number $n\geq 1$, let $T_n$ denote the set of planar binary trees with $n$ leaves and one root. Below are the sets $T_n$ for $n=1,2,3,4$:
$$
T_1=\left\{ \mbox{\begin{tikzpicture}[xscale=0.4, yscale=0.4,baseline={([yshift=-.8ex]current bounding box.center)}]
\draw[line width=1pt] (0,0) -- (0,2);
\end{tikzpicture}} \: \right\}, \quad T_2= \left\{ \mbox{
\begin{tikzpicture}[xscale=0.4, yscale=0.4,baseline={([yshift=-.8ex]current bounding box.center)}]
\draw[line width=1pt] (0,0) -- (0,1) -- (1,2);
\draw[line width=1pt] (0,1) -- (-1,2);
\end{tikzpicture}
} \right\} , \quad T_3=\left\{ \mbox{
\begin{tikzpicture}[xscale=0.4, yscale=0.4,baseline={([yshift=-.8ex]current bounding box.center)}]
\draw[line width=1pt] (0,0) -- (0,1) -- (1,2);
\draw[line width=1pt] (0.5,1.5) -- (0,2);
\draw[line width=1pt] (0,1) -- (-1,2);
\end{tikzpicture}
}, \mbox{
\begin{tikzpicture}[xscale=0.4, yscale=0.4,baseline={([yshift=-.8ex]current bounding box.center)}]
\draw[line width=1pt] (0,0) -- (0,1) -- (1,2);
\draw[line width=1pt] (0,1) -- (-1,2);
\draw[line width=1pt] (-0.5,1.5) -- (0,2);
\end{tikzpicture}
} \right\} $$
$$ T_4=\left\{ \mbox{
\begin{tikzpicture}[xscale=0.4, yscale=0.4,baseline={([yshift=-.8ex]current bounding box.center)}]
\draw[line width=1pt] (0,0) -- (0,1) -- (1,2);
\draw[line width=1pt] (0.3,1.3) -- (-0.2,2);
\draw[line width=1pt] (0.6,1.6) -- (0.35,2);
\draw[line width=1pt] (0,1) -- (-1,2);
\end{tikzpicture}
}, \mbox{
\begin{tikzpicture}[xscale=0.4, yscale=0.4,baseline={([yshift=-.8ex]current bounding box.center)}]
\draw[line width=1pt] (0,0) -- (0,1) -- (1,2);
\draw[line width=1pt] (0.3,1.3) -- (-0.2,2);
\draw[line width=1pt] (0.07,1.6) -- (0.45,2);
\draw[line width=1pt] (0,1) -- (-1,2);
\end{tikzpicture}
},\mbox{
\begin{tikzpicture}[xscale=0.4, yscale=0.4,baseline={([yshift=-.8ex]current bounding box.center)}]
\draw[line width=1pt] (0,0) -- (0,1) -- (1,2);
\draw[line width=1pt] (0,1) -- (-1,2);
\draw[line width=1pt] (0.6,1.6) -- (0.2,2);
\draw[line width=1pt] (-0.6,1.6) -- (-0.2,2);
\end{tikzpicture}
}, \mbox{
\begin{tikzpicture}[xscale=0.4, yscale=0.4,baseline={([yshift=-.8ex]current bounding box.center)}]
\draw[line width=1pt] (0,0) -- (0,1) -- (1,2);
\draw[line width=1pt] (-0.3,1.3) -- (0.2,2);
\draw[line width=1pt] (-0.6,1.6) -- (-0.35,2);
\draw[line width=1pt] (0,1) -- (-1,2);
\end{tikzpicture}
}, \mbox{
\begin{tikzpicture}[xscale=0.4, yscale=0.4,baseline={([yshift=-.8ex]current bounding box.center)}]
\draw[line width=1pt] (0,0) -- (0,1) -- (1,2);
\draw[line width=1pt] (-0.3,1.3) -- (0.2,2);
\draw[line width=1pt] (-0.07,1.6) -- (-0.45,2);
\draw[line width=1pt] (0,1) -- (-1,2);
\end{tikzpicture}
}\right\}.
$$
%

A \textbf{B-augmented $n$-tree}  is a pair $(\varphi, a)$, where 
$\varphi \in T_n$ is an $n$-tree and $a$ is an $n$-tuple $(a_1, a_2, \dots,a_n)$ consisting of $n$ pairs of nonnegative integers.

A \textbf{RB-augmented $n$-tree} is a triple $(\varphi, a,f)$ where $(\varphi, a)$ is a  B-augmented $n$-tree  and $f$ is a function  that assigns a nonnegative integer to each element $v$ in $N(\varphi)$, the set of nodes of the tree $\varphi$.

We call the tree $\varphi$ the underlying tree of the B-augmented $n$-tree $(\varphi,a)$ (respectively   RB-augmented $n$-tree  $(\varphi,a,f)$) while,
for all $ i=1, \dots,n$, the pair  of integers $a_i$ is called the \textbf{($\alpha,\beta$)-power} of the leaf $i$ and $f(v)$ is the    \textbf{$R$-power}  of the node $v$.
As we have labelled the $n$ leaves of a tree $\varphi \in T_n$, we can consider that $a$ is a function from the set $\{1,2, \dots, n\}$ to  $\mathbb{N}^2$.

We denote by $BT_n$ (respectively $\overline{BT}_n$) the set of B-augmented (respectively RB-augmented) planar binary trees with $n$ leaves. The unions over $n\in\mathbb{N}$ of the sets $T_n$, $BT_n$, $\overline{BT}_n$ are denoted respectively $T$, $BT$ and $\overline{BT}$.
We depict the following  examples of B-augmented (respectively RB-augmented)  $3$-trees:
\begin{center}
\begin{tikzpicture}[xscale=0.75, yscale=0.55]
\draw[line width=1pt] (0,0) -- (0,2) -- (2,4);
\draw[line width=1pt] (0,2) -- (-2,4);
\draw[line width=1pt] (-1,3) -- (0,4);


\draw (-2,4.7) node {\makebox(0,0){\small$(0,2)$}};
\draw (0,4.7) node {\makebox(0,0){\small$(3,1)$}};
\draw (2,4.7) node {\makebox(0,0){\small$(1,0)$}};
\end{tikzpicture}\hspace*{2cm}
\begin{tikzpicture}[xscale=0.75, yscale=0.55]

\draw[line width=1pt] (0,0) -- (0,2) -- (-2,4);
\draw[line width=1pt] (0,2) -- (2,4);
\draw[line width=1pt] (1,3) -- (0,4);
\draw (-2,4.7) node {\makebox(0,0){\small$(1,2)(1)$}};
\draw (0,4.7) node {\makebox(0,0){\small$(0,2)(0)$}};
\draw (2,4.7) node {\makebox(0,0){\small$(3,0)(2)$}};
\draw (0.4,1.8) node {\makebox(0,0){\small$(3)$}};
\draw (1.4,2.8) node {\makebox(0,0){\small$(0)$}};
\end{tikzpicture}
\end{center}

Given an $n$-tree (respectively B-augmented or RB-augmented $n$-tree)  $\psi$ and an $m$-tree (respectively 
B-augmented or RB-augmented $m$-tree) $\varphi$, their \textbf{grafting}
$
\psi \vee \varphi \in T_{n+m} $ (respectively in $BT_{n+m}$ or $\overline{BT}_{n+m}$) 
is the tree obtained by joining the roots of $\psi $ and $\varphi $ to create a new root (in the RB-augmented case, one puts label 0 on the new node). Pictorially, 
\mbox{
\begin{tikzpicture}[xscale=0.4, yscale=0.4,baseline={([yshift=-.8ex]current bounding box.center)}]
\draw[line width=1pt] (0,0) -- (0,1) -- (1,2);
\draw[line width=1pt] (0,1) -- (-1,2);
\draw (1, 2.5) node {\makebox(0,0){\small$\varphi$}};
\draw (-1,2.5) node {\makebox(0,0){\small$\psi$}};
\end{tikzpicture}
}.
For example:
\begin{center}
\begin{tikzpicture}[xscale=0.6, yscale=0.6]
\draw[line width=1pt] (0,0) -- (0,2) -- (2,4);
\draw[line width=1pt] (0,2) -- (-2,4);
\draw[line width=1pt] (-1,3) -- (0,4);

\draw (-2,4.7) node {\makebox(0,0){\small$(1,0)$}};
\draw (0,4.7) node {\makebox(0,0){\small$(1,1)$}};
\draw (2,4.7) node{\makebox(0,0){\small$(2,1)$}};
\draw (3,2) node {$\vee$};
\end{tikzpicture}
\begin{tikzpicture}[xscale=0.6, yscale=0.6]
\draw[line width=1pt] (0,0) -- (0,2) -- (-2,4);
\draw[line width=1pt] (0,2) -- (2,4);
\draw[line width=1pt] (1,3) -- (0,4);

\draw (-2,4.7) node {\makebox(0,0){\small$(0,0)$}};
\draw (0,4.7) node {\makebox(0,0){\small$(1,0)$}};
\draw (2,4.7) node {\makebox(0,0){\small$(2,3)$}};
\draw (3,2) node {$=$};
\end{tikzpicture}
\begin{tikzpicture}[xscale=0.9, yscale=0.9]
\draw[line width=1pt] (0,1) -- (0,1.5) -- (-1.5,3) -- (-2.5,4);
\draw[line width=1pt] (-2,3.5) -- (-1.5,4);
\draw[line width=1pt] (-1.5,3) -- (-0.5,4);
\draw[line width=1pt] (0,1.5) -- (1.5,3) -- (2.5,4);
\draw[line width=1pt] (1.5,3) -- (0.5,4);
\draw[line width=1pt] (2,3.5) -- (1.5,4);


\draw (-2.5,4.4) node {\makebox(0,0){\small$(1,0)$}};
\draw (-1.5,4.4) node {\makebox(0,0){\small$(1,1)$}};
\draw (-0.5,4.4) node {\makebox(0,0){\small$(2,1)$}};
\draw (0.5,4.4) node {\makebox(0,0){\small$(0,0)$}};
\draw (1.5,4.4) node {\makebox(0,0){\small$(1,0)$}};
\draw (2.5,4.4) node {\makebox(0,0){\small$(2,3)$}};
\end{tikzpicture}
\end{center}

Note that grafting is neither an associative nor a commutative operation. For any tree $\varphi \in T_n$ 
(respectively B-augmented), there are unique integers $p$ and $q$ with $p+q=n$ and trees 
$\varphi_1 \in T_p$ (respectively in $BT_p$) and $\varphi_2 \in T_q$  (respectively in $BT_q$) such that $\varphi= \varphi_1 \vee \varphi_2$. It is clear that any tree in $T_n$  (respectively in $BT_n$) can be obtained from \begin{tikzpicture}[xscale=0.4, yscale=0.4,baseline={([yshift=-.8ex]current bounding box.center)}]
\draw[line width=1pt] (0,0) -- (0,1);
\end{tikzpicture} , the 1-tree  (respectively  B-augmented 1-tree),  by successive grafting. For the B-augmented  trees $(\varphi ,a)$, one has $(\varphi,a)= (\varphi_1 ,a_{(1)})\vee (\varphi_2,a_{(2)})$, where $a_{(1)},a_{(2)}$ are the corresponding leaf weights. For the RB-augmented trees one needs in addition to use the map $R$ described in \eqref{RR}, that is there exists moreover an integer $s$ such that  $\varphi=R^s( \varphi_1 \vee \varphi_2)$ with $\varphi_1 \in \overline{BT}_{p}$ and $\varphi_2 \in \overline{BT}_{q}$. For example: 
\begin{center}
\begin{tikzpicture}[xscale=1, yscale=0.9]
\draw[line width=1pt] (0,1) -- (0,1.5) -- (-1.5,3) -- (-2.5,4);
\draw[line width=1pt] (-2,3.5) -- (-1.5,4);
\draw[line width=1pt] (-1.5,3) -- (-0.5,4);
\draw[line width=1pt] (0,1.5) -- (1.5,3) -- (2.5,4);
\draw[line width=1pt] (1.5,3) -- (0.5,4);

\draw (0.3,1.5) node {\makebox(0,0){\tiny$(3)$}};
\draw (-1.2,3) node {\makebox(0,0){\tiny$(1)$}};
\draw (1.7,3) node {\makebox(0,0){\tiny$(2)$}};
\draw (-1.7,3.5) node {\makebox(0,0){\tiny$(0)$}};

\draw (-2.5,4.4) node {\makebox(0,0){\tiny$(2,1)(0)$}};
\draw (-1.5,4.4) node {\makebox(0,0){\tiny$(0,1)(2)$}};
\draw (-0.5,4.4) node {\makebox(0,0){\tiny$(2,0)(1)$}};
\draw (0.5,4.4) node {\makebox(0,0){\tiny$(0,0)(3)$}};
\draw (2.5,4.4) node {\makebox(0,0){\tiny$(2,3)(1)$}};
\draw (3,2.5) node {$= R^3 \bigg($};
\end{tikzpicture}
\begin{tikzpicture}[xscale=0.6, yscale=0.6]
\draw[line width=1pt] (0,0) -- (0,2) -- (2,4);
\draw[line width=1pt] (0,2) -- (-2,4);
\draw[line width=1pt] (-1,3) -- (0,4);

\draw (-2,4.7) node {\makebox(0,0){\tiny$(2,1)(0)$}};
\draw (0,4.7) node {\makebox(0,0){\tiny$(0,1)(2)$}};
\draw (2,4.7) node{\makebox(0,0){\tiny$(2,0)(1)$}};
\draw (3,2) node {$\vee$};

\draw (0.5,2) node {\makebox(0,0){\tiny$(1)$}};
\draw (-0.5,3) node {\makebox(0,0){\tiny$(0)$}};
\end{tikzpicture}
\begin{tikzpicture}[xscale=0.6, yscale=0.6]
\draw[line width=1pt] (0,0) -- (0,2) -- (-2,4);
\draw[line width=1pt] (0,2) -- (2,4);

\draw (-2,4.7) node {\makebox(0,0){\tiny$(0,0)(3)$}};
\draw (2,4.7) node {\makebox(0,0){\tiny$(2,3)(1)$}};
\draw (0.5,2) node {\makebox(0,0){\tiny$(2)$}};
\draw (3,2.2) node {$\bigg)$};
\end{tikzpicture}
\end{center}

%
We  consider the linear spaces ${\mathbb{BT}}$ freely generated by the set~$BT$ and ${\overline{\mathbb{BT}}}$ freely 
generated by the set~$\overline{BT}$. 
The grafting is extended linearly to ${\mathbb{BT}}$ and ${\overline{\mathbb{BT}}}$, on which we define moreover   two  linear maps $\alpha$ and $\beta$ and  one more extra  linear map $R$ on ${\overline{\mathbb{BT}}}$, in the following way:\\[2mm]
$\bullet $ The map  $\alpha:{\mathbb{BT}} \to {\mathbb{BT}} $ (respectively $\alpha:{\overline{\mathbb{BT}}} \to {\overline{\mathbb{BT}}} $) 
sends a B-augmented (respectively RB-augmented) $n$-tree to a B-augmented (respectively RB-augmented) $n$-tree obtained  
by adding $+1$ to all  first components of $a_i$ in $a$, i.e. 
 \begin{small}
 \begin{equation}\label{alphaF}
 \alpha((\varphi, ((a_{11},a_{12}), (a_{21},a_{22}),\cdots, (a_{n1},a_{n2}))))=(\varphi, ((a_{11}+1,a_{12}), (a_{21}+1,a_{22}),\cdots, (a_{n1}+1,a_{n2})))
 \end{equation}
 \end{small}
 respectively
  \begin{small}
 \begin{equation}\label{alphaFR}
 \alpha((\varphi, ((a_{11},a_{12}), (a_{21},a_{22}),\cdots, (a_{n1},a_{n2}))),f)=(\varphi, ((a_{11}+1,a_{12}), (a_{21}+1,a_{22}),\cdots, (a_{n1}+1,a_{n2})),f)
 \end{equation}
 \end{small}
$\bullet $ The map $\beta:{\mathbb{BT}} \to {\mathbb{BT}} $   (respectively  $\beta:{\overline{\mathbb{BT}}} \to {\overline{\mathbb{BT}}} $) 
sends a B-augmented (respectively RB-augmented) $n$-tree to a B-augmented (respectively RB-augmented) 
$n$-tree obtained by  adding $+1$ to all  second components of $a_i$ in $a$, i.e. 
  \begin{small}
 \begin{equation}\label{betaF}
 \beta((\varphi, ((a_{11},a_{12}), (a_{21},a_{22}),\cdots, (a_{n1},a_{n2}))))=(\varphi, ((a_{11},a_{12}+1), (a_{21},a_{22}+1),\cdots, (a_{n1},a_{n2}+1)))
  \end{equation}
  \end{small}
  respectively
   \begin{small}
 \begin{equation}\label{betaFR}
 \beta((\varphi, ((a_{11},a_{12}), (a_{21},a_{22}),\cdots, (a_{n1},a_{n2})),f))=(\varphi, ((a_{11},a_{12}+1), (a_{21},a_{22}+1),\cdots, (a_{n1},a_{n2}+1)),f)
  \end{equation}
  \end{small}
$\bullet $ The map $R:{\overline{\mathbb{BT}}} \to {\overline{\mathbb{BT}}} $ sends a RB-augmented $n$-tree $(\varphi,a,f)$ to a  RB-augmented $n$-tree  $(\varphi,a,Rf)$ such that    $R f$ is equal to $f$, except that for the lowest vertex $v_{low}$  (or the only leaf if $\varphi \in T_1$) we have \begin{equation}Rf(v_{low})=f(v_{low})+1.\label{RR}\end{equation}

%
%
%
%
%

Observe that the maps $\alpha,\ \beta, \ R$ defined above commute and $\alpha,\ \beta$ are algebra morphisms for the grafting of trees, that is
$\alpha((\varphi,a) \vee (\psi,b))= \alpha((\varphi,a))\vee \alpha((\psi,b))$ and 
$\beta((\varphi,a) \vee (\psi,b))= \beta((\varphi,a))\vee \beta((\psi,b))$, 
for  $(\varphi,a), (\psi,b)$ in  $ \mathbb{BT}$ (same for  $\overline{\mathbb{BT}}$).

\subsection{Free BiHom-nonassociative algebra }
 
Let $(A, \mu, \alpha, \beta)$ be a BiHom-nonassociative algebra.  Given elements $x_1,\cdots, x_n\in A$, there are $\#T_n = C_{n-1} $ (Catalan's number) ways to parenthesize the monomial $x_1 \cdots x_n$ to obtain an element in $A$. Hence, every parenthesized monomial $x_1 \cdots x_n$ corresponds to  an $n$-tree. Each B-augmented $n$-tree provides a way to multiply $n$ elements in a BiHom-nonassociative algebra $(A, \mu, \alpha, \beta)$. We define maps 
\begin{equation}\label{actionBT}
{\mathbb{BT}}\otimes A^{\otimes n}\rightarrow A, \quad ((\varphi,a);x_1\otimes \cdots \otimes x_n)\rightarrow (\alpha^{a_{11}}\beta^{a_{12}}(x_1)\cdots \alpha^{a_{n1}}\beta^{a_{n2}}(x_n))_\varphi
\end{equation}
inductively via the rules:\\
1) $(x)_I=x\ \text{ for } x\in A$, where $I$ denotes the 1-tree.\\
2) If $\varphi=\varphi_1\vee \varphi_2$, where $\varphi_1\in T_p$ and $ \varphi_2 \in T_q$,  then 
 $(x_1 \cdots x_n)_\varphi=(x_1 \cdots x_p)_{\varphi_1}(x_{p+1} \cdots x_{p+q})_{\varphi_2}.$

Recall that $(x_1\cdots x_n)_\varphi$ means the element obtained by putting  $x_1, \cdots , x_n$ on the leaves of the $n$-tree $\varphi$ and applying the multiplication with respect to the tree.

There is a forgetful functor $E : \BiHomNonAs \rightarrow  \BiHomMod$ that sends a BiHom-nonassociative algebra  $(A,\mu,\alpha ,\beta)$ to the BiHom-module $(A,\alpha ,\beta)$, forgetting about the multiplication $\mu$. Conversely, using a similar construction as in \cite{YauEnv}, one may prove, by using the previous construction, for the functor $E$, the existence of a left adjoint 
$F_{BHNAs} : \BiHomMod \rightarrow \BiHomNonAs   $ defined as $$F_{BHNAs} (M)=\bigoplus_{n\geq 1}
\bigoplus_{(\varphi , a)\in BT_n}M_{(\varphi , a)}^{\otimes n},$$
for $(M,\alpha,\beta)\in \BiHomMod$, where $M_{(\varphi , a)}^{\otimes n}$ is a copy of $M^{\otimes n}$ 
(its generators will be denoted by $(x_1 \otimes \cdots \otimes x_n)_{(\varphi , a)}$). 

Hence, $F_{BHNAs}(M)$ is called the free BiHom-nonassociative algebra of the BiHom-module $(M,\alpha,\beta)$, where the multiplication $\mu_F$  is defined by grafting and the structure linear maps $\alpha_F$ and $\beta_F$ are defined in \eqref{alphaF} and \eqref{betaF}.
\begin{remark}
One may get the free BiHom-associative algebra and the enveloping algebra of a BiHom-Lie algebra by considering quotients over ideals constructed in a similar way as in \cite{YauEnv} and by adding the images of the second structure map. The free BiHom-associative algebra is obtained by a quotient with a two-sided ideal $I^\infty$ defined as 
$$I^1=\langle im(\mu_F\circ (\mu_F\otimes \beta _F-\alpha _F\otimes \mu_F)) \rangle, \quad I^{n+1}=\langle I^n\cup \alpha_F(I^n) \cup \beta_F(I^n)\rangle, \quad I^ \infty=\cup_{n\geq 1}I^n.$$
\end{remark}

\subsection{The category of Rota-Baxter BiHom-associative algebras}

We denote by  $\BiHomRB_\lambda$  the  category of Rota-Baxter BiHom-associative algebras of weight $\lambda$.  A Rota-Baxter BiHom-associative algebra of weight $\lambda$ is a tuple $(A,\mu,\alpha,\beta,R)$ in which
$(A,\mu,\alpha,\beta)$ is a BiHom-associative algebra, 
the linear map $R \colon A \to A$ satisfies the Rota-Baxter identity \eqref{RBrel} and 
$\alpha \circ R = R \circ \alpha$ and $\beta \circ R = R \circ \beta$.

A morphism $f \colon (A,\mu_A,\alpha_A,\beta_A,R_A) \to (B,\mu_B,\alpha_B,\beta_B,R_B)$ of Rota-Baxter BiHom-associative algebras of weight $\lambda$ is a morphism $f \colon (A,\alpha_A,\beta_A) \to (B,\alpha_B,\beta_B)$ of the underlying BiHom-modules such that $f \circ \mu_A = \mu_B \circ f^{\otimes 2}$ and $f \circ R_A = R_B \circ f$.

A Rota-Baxter algebra $(A,\mu,R)$ of weight $\lambda$ gives rise to a Rota-Baxter BiHom-associative algebra of weight 
$\lambda$ with 
$\alpha = \beta = id_A$.  A morphism of Rota-Baxter algebras of weight $\lambda$ is an algebra morphism that commutes with the Rota-Baxter operators $R$.  In particular, the functor $(A,\mu,R) \mapsto (A,\mu, id_A, id_A, R)$ from the category of Rota-Baxter algebras of weight $\lambda$ to the category $\BiHomRB_\lambda$ is a full and faithful embedding.  So we can regard the category of Rota-Baxter algebras and Rota-Baxter Hom-associative algebras (for which $\alpha=\beta$)  as  subcategories of  $\BiHomRB_\lambda$.

Similarly to  \cite{makhloufyau} and using B-augmented and RB-augmented $n$-trees, we may get:
\begin{theorem}
\label{thm:freehrb}
The forgetful functor
$\cO \colon \BiHomRB_\lambda \to \BiHomMod$ admits a right adjoint, where 
$\cO(A,\mu,\alpha,\beta,R) = (A,\alpha, \beta)$. 
\end{theorem}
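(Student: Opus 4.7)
Although the statement literally asserts a \emph{right} adjoint of $\cO$, the label \texttt{freehrb}, the preceding construction of RB-augmented trees, and the explicit parallel with \cite{makhloufyau} all indicate that the intended claim is the existence of a \emph{left} adjoint, i.e.\ of a free Rota-Baxter BiHom-associative algebra functor. A genuine right adjoint would be unusual for an algebraic forgetful functor, since $\cO$ does not preserve arbitrary colimits; and the RB-augmented tree machinery just set up is tailored precisely to a free construction. I proceed under this reading.

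Given a BiHom-module $(M, \alpha_M, \beta_M)$, set
\[
\widetilde F_\lambda(M) \;=\; \bigoplus_{n \geq 1}\,\bigoplus_{(\varphi, a, f)\in \overline{BT}_n} M_{(\varphi, a, f)}^{\otimes n},
\]
where each summand is a copy of $M^{\otimes n}$ indexed by an RB-augmented $n$-tree. Equip $\widetilde F_\lambda(M)$ with: the multiplication $\mu$ given by grafting of indexed trees together with concatenation of the tensor factors; linear maps $\alpha, \beta$ acting on indexed trees via \eqref{alphaFR} and \eqref{betaFR}; and the operator $R$ acting via \eqref{RR}. This yields a Rota-Baxter BiHom-\emph{non}associative algebra which is free in the corresponding nonassociative category by the same tree-induction argument as for $F_{BHNAs}$ in the preceding subsection.

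To enforce BiHom-associativity and the Rota-Baxter identity of weight $\lambda$, define, as a two-sided ideal of $\widetilde F_\lambda(M)$,
\[
J^1 \;=\; \bigl\langle\,\mu(\mu(x,y),\beta(z)) - \mu(\alpha(x),\mu(y,z)),\; R(x)R(y) - R(R(x)y + xR(y) + \lambda xy)\,\bigr\rangle,
\]
with $x,y,z$ ranging over all decorated trees. Inductively set $J^{n+1} = \langle J^n \cup \alpha(J^n) \cup \beta(J^n) \cup R(J^n)\rangle$ and $J^\infty = \bigcup_{n\geq 1} J^n$, and define $F_\lambda(M) := \widetilde F_\lambda(M)/J^\infty$. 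The remaining steps are: (i) check that $\alpha,\beta,R$ descend to $F_\lambda(M)$, using the commutations $\alpha\beta=\beta\alpha$, $\alpha R = R\alpha$, $\beta R = R\beta$ on $\widetilde F_\lambda(M)$ (immediate from \eqref{alphaFR}--\eqref{RR}) and the fact that $J^\infty$ is by design stable under these maps; (ii) verify that the induced multiplication is BiHom-associative and the induced $R$ is Rota-Baxter of weight $\lambda$, which holds by construction of $J^1$; (iii) let $\eta_M : M \to \cO(F_\lambda(M))$ be the map sending $x\in M$ to the class of $x$ placed on the unique leaf of the trivially decorated $1$-tree, and given $g : M \to \cO(A)$ with $A=(A,\mu_A,\alpha_A,\beta_A,R_A)\in \BiHomRB_\lambda$, define $\widehat g : \widetilde F_\lambda(M) \to A$ by tree recursion, sending a leaf of weight $(i,j)(k)$ carrying $x$ to $R_A^k(\alpha_A^i \beta_A^j(g(x)))$ and a grafting to the corresponding product in $A$ (post-composed with $R_A$ if the root has positive $R$-power). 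Since $A$ is BiHom-associative, $R_A$ is Rota-Baxter of weight $\lambda$, and $R_A$ commutes with $\alpha_A,\beta_A$, the map $\widehat g$ kills $J^\infty$ and descends to $\tilde g : F_\lambda(M) \to A$; uniqueness is immediate from tree induction, giving the adjunction isomorphism $\Hom_{\BiHomRB_\lambda}(F_\lambda(M), A) \cong \Hom_{\BiHomMod}(M, \cO(A))$.

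The main obstacle is step (i)--(ii): one must ensure that the inductively defined $J^\infty$ really is a two-sided ideal closed under $\alpha, \beta, R$, and that after quotienting the Rota-Baxter identity holds \emph{globally}, not merely on the generators of $J^1$. This is the purpose of iterating $J^n \leadsto J^{n+1}$: applying $R$ to a relation typically produces new relations that must themselves vanish, and similarly for $\alpha$ and $\beta$; only after closing under all three does one obtain a stable ideal. Once this closure is carried out (mirroring the template of the ideal $I^\infty$ in the remark on free BiHom-associative algebras), the rest of the argument is a routine tree-induction.
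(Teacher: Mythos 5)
Your reading of the statement is right: the paper's own proof text concludes with ``is then the desired \emph{left} adjoint,'' so the word ``right'' in the theorem is a slip, and your construction is essentially the paper's. The paper merely organizes it as a composite of two adjunctions through an intermediate category $\bD$ (linear spaces with a multiplication and pairwise commuting $\alpha,\beta,R$, no axioms imposed), taking $\cF_1(M)=\bigoplus_{n\geq 1}\bigoplus_{(\varphi,a,f)\in\overline{BT}_n}M^{\otimes n}_{(\varphi,a,f)}$ with grafting and the tree-defined $\alpha,\beta,R$, followed by $\cF_2(A)=A/\langle S\rangle$ where $S$ consists of the BiHom-associativity and Rota--Baxter defects; this is exactly your $\widetilde F_\lambda(M)$ followed by the quotient by $J^\infty$, and if anything you are more explicit than the paper about closing the ideal under $\alpha$, $\beta$ and $R$ so that the structure maps descend to the quotient.
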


The proof uses  an intermediate category $\bD$, whose objects are tuples $(A,\mu,\alpha,\beta,R)$, where
$A$ is a $\Bbbk$-linear space, $\mu:A\otimes A\rightarrow A$ is a multiplication and $\alpha,\beta,R $ are commuting two by two linear self-maps $A\rightarrow A$.
A morphism $f \colon (A,\mu_A,\alpha_A,\beta_A,R_A) \to (B,\mu_B,\alpha_B,\beta_B,R_B)$ in $\mathbf{D}$ consists of a linear map $f \colon A \to B$ such that $f \circ \mu_A = \mu_B \circ f^{\otimes 2}$, $f \circ \alpha_A = \alpha_B \circ f$, $f \circ \beta_A = \beta_B \circ f$ and $f \circ R_A = R_B \circ f$.

  There are forgetful functors
$
\BiHomRB_\lambda \xrightarrow{\cO_2} \bD \xrightarrow{\cO_1} \BiHomMod,
$
whose composition is $\cO$.  One shows that each of these two forgetful functors $\cO_i$ admits a left adjoint $\cF_i$.  The composition
$
\cF = \cF_2 \circ \cF_1
$
is then the desired left adjoint.  

First, consider the  forgetful functor
   $
   \cO_1 \colon \bD \to \BiHomMod
  $
defined as
$
\cO_1(A,\mu,\alpha,\beta,R) = (A,\alpha,\beta).
$
The functor $\cO_1$ admits a left adjoint
$
\cF_1 \colon \BiHomMod \to \mathbf{D},
$
defined as follows. If $(M, \alpha , \beta )\in \BiHomMod$, we set $\cF_1 (M)=\bigoplus_{n\geq 1}\bigoplus_{(\varphi , a,f)\in \overline{BT}_n}M_{(\varphi , a,f)}^{\otimes n},$ where $M_{(\varphi , a,f)}^{\otimes n}$ is a copy of $M^{\otimes n}$ 
(its generators will be denoted by $(x_1\otimes ...\otimes x_n)_{(\varphi , a, f)}$). We need to define on $\cF_1 (M)$ 
the four operations $\mu , \alpha , \beta , R$. The multiplication $\mu $ on $\cF_1 (M)$ is defined by 
$$\mu((x_1 \otimes \cdots \otimes x_n)_{(\varphi , a,f)},(x_{n+1} \otimes \cdots \otimes x_{n+m})_{(\psi , b,g)})=(x_1 \otimes \cdots \otimes x_{n+m})_{(\varphi , a,f)\vee  (\psi , b,g)}.$$
 The maps $\alpha,\beta$ and $R$ are defined, using  \eqref{alphaFR}, \eqref{betaFR}, \eqref{RR}, by 
\begin{eqnarray*}
&& \alpha((x_1 \otimes \cdots \otimes x_n)_{(\varphi , a,f)})=(x_1 \otimes \cdots \otimes x_n)_{\alpha(\varphi , a,f)}, \\
 && \beta((x_1 \otimes \cdots \otimes x_n)_{(\varphi , a,f)})=(x_1 \otimes \cdots \otimes x_n)_{\beta(\varphi , a,f)}, \\
&& R((x_1 \otimes \cdots \otimes x_n)_{(\varphi , a,f)})=(x_1 \otimes \cdots \otimes x_n)_{R(\varphi , a,f)}.
 \end{eqnarray*}

In order to prove that $\cF_1$ is the left adjoint of the forgetful functor $ \cO_1$, one has to define and use a certain action of RB-augmented trees. Namely, if $(A, \mu , \alpha , \beta , R)\in \bD$, define  ${\overline{\mathbb{BT}}}\otimes A^{\otimes n}\rightarrow A$, 
\begin{eqnarray}\label{actionTT}
&&((\varphi,a,f);x_1\otimes \cdots \otimes x_n)\rightarrow (\alpha^{a_{11}}\beta^{a_{12}}R^{f(v_{top,1})}(x_1)\cdots \alpha^{a_{n1}}\beta^{a_{n2}}R^{f(v_{top,n})}(x_n))_\varphi , 
\end{eqnarray}
where $v_{top,i}$ are the highest nodes (leaves). Moreover, at each internal node $v$ the map $R$ is applied with power $f(v)$. 
For example, the following tree applied to $x_1\otimes x_2\otimes x_3$ 

\begin{center}
%
%
\begin{tikzpicture}[xscale=0.75, yscale=0.55]

\draw[line width=1pt] (0,0) -- (0,2) -- (-2,4);
\draw[line width=1pt] (0,2) -- (2,4);
\draw[line width=1pt] (1,3) -- (0,4);
\draw (-2,4.7) node {\makebox(0,0){\small$(1,2)(1)$}};
\draw (-2,5.5) node {\makebox(0,0){\small$x_1$}};
\draw (0,4.7) node {\makebox(0,0){\small$(0,2)(0)$}};
\draw (0,5.5) node {\makebox(0,0){\small$x_2$}};
\draw (2,4.7) node {\makebox(0,0){\small$(3,0)(2)$}};
\draw (0.4,1.8) node {\makebox(0,0){\small$(3)$}};
\draw (2,5.5) node {\makebox(0,0){\small$x_3$}};
\draw (1.4,2.8) node {\makebox(0,0){\small$(1)$}};
\end{tikzpicture}
\end{center}
translates to $R^3(\alpha\beta^2R(x_1)R(\beta^2 (x_2)\alpha^3R^2(x_3)))$.

To construct $\cF_2$, a left adjoint of $\cO_2$, one   picks an object $(A,\mu,\alpha,\beta,R)\in  \mathbf{D}$ and take  $S$ a  subset of $A$  consisting of the generating relations in a Rota-Baxter BiHom-associative algebra, i.e. the elements
 $ \alpha(x)(yz)-(xy)\beta(z)$ and $R(x)R(y)-R(R(x)y+xR(y)+\lambda xy) $, 
for all $x,y,z \in A$. Then we set
$\cF_2(A) = A/\langle S\rangle $.

\subsection{The categories of BiHom-dendriform algebras and BiHom-tridendriform algebras}
Naturally and similarly to what we did above, we consider
$\BiHomdidend$, the category of BiHom-dendriform algebras and 
$\BiHomtridend$, the category of BiHom-tridendriform algebras. One shows that the forgetful functors from $\BiHomdidend$ (respectively $\BiHomtridend$) to $\BiHomMod$, the category of BiHom-modules, admit left adjoints. Moreover, we have:
\begin{theorem} (i) There is an adjoint pair of functors
   \begin{equation}
   \label{eq:HDD}
   U_{\mathcal{BD}} \colon \BiHomdidend \rightleftarrows \BiHomRB_0 \colon \mathcal{BD},
   \end{equation}
in which the right adjoint is given by
$
\mathcal{BD}(A,\mu,\alpha,\beta, R) = (A,\prec,\succ,\alpha,\beta) \in \BiHomdidend
$
with
   $x \prec y =xR(y)$ and 
   $x \succ y =R(x)y$, 
for $x, y \in A$.

(ii) There is an adjoint pair of functors
   \[
   U_{\mathcal{BT}} \colon \BiHomtridend \rightleftarrows \BiHomRB_\lambda \colon \mathcal{BT},
   \]
in which $U_{\mathcal{BT}}$ is the left adjoint.  For $(A,\mu,\alpha,\beta,R) \in \BiHomRB_\lambda$, the binary operations in the object
   $\mathcal{BT}(A) = (A,\prec,\succ,\cdot,\alpha,\beta) \in \BiHomtridend $
are defined as
   $
   x \prec y= xR(y), \ x \succ y = R(x)y, \ x \cdot y = \lambda xy
   $, 
for $x,y \in A$.
\end{theorem}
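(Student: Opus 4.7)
The plan is to treat the two adjunctions in parallel, constructing each left adjoint as a quotient of a free Rota-Baxter BiHom-associative algebra produced by Theorem \ref{thm:freehrb}.

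First I would verify that $\mathcal{BD}$ and $\mathcal{BT}$ are well-defined functors. On objects this is exactly Proposition \ref{BHRBzero} and Proposition \ref{BHRB}. On morphisms: if $f\colon (A,\mu_A,\alpha_A,\beta_A,R_A) \to (B,\mu_B,\alpha_B,\beta_B,R_B)$ is a morphism in $\BiHomRB_\lambda$, then $f$ intertwines $\alpha,\beta$ and $R$, so the identities $f(x\prec y) = f(xR_A(y)) = f(x)R_B(f(y)) = f(x)\prec f(y)$, and analogously for $\succ$ and (in case (ii)) for $x\cdot y = \lambda xy$, are immediate.

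Next I would build the left adjoint in case (i). Let $\cF$ denote the left adjoint of the forgetful functor $\cO\colon \BiHomRB_0 \to \BiHomMod$ granted by \thref{freehrb}, with unit $\eta\colon \Id \Rightarrow \cO\cF$. Given a BiHom-dendriform algebra $(D,\prec,\succ,\alpha,\beta)$, let $\mathcal{I}_D \subseteq \cF(D)$ be the smallest subspace that is closed under the structure maps $\alpha_{\cF},\beta_{\cF}$, under $R_{\cF}$, and under left and right multiplication by $\cF(D)$, and which contains the elements
\begin{equation*}
\eta_D(x\prec y) - \eta_D(x)\,R_{\cF}(\eta_D(y)), \quad \eta_D(x\succ y) - R_{\cF}(\eta_D(x))\,\eta_D(y), \quad x,y\in D.
\end{equation*}
Set $U_{\mathcal{BD}}(D) := \cF(D)/\mathcal{I}_D$; by construction this is a Rota-Baxter BiHom-associative algebra of weight $0$. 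For a morphism $g\colon D\to D'$ of BiHom-dendriform algebras, the morphism $\cF(g)\colon \cF(D)\to \cF(D')$ sends generators of $\mathcal{I}_D$ into $\mathcal{I}_{D'}$ (since $g$ preserves $\prec,\succ$), and hence all of $\mathcal{I}_D$, and so descends to $U_{\mathcal{BD}}(g)$. Let $\tilde\eta_D\colon D \to \mathcal{BD}(U_{\mathcal{BD}}(D))$ be the composite of $\eta_D$ with the projection; by the defining relations of $\mathcal{I}_D$ it is a morphism of BiHom-dendriform algebras. Given any BiHom-dendriform morphism $\varphi\colon D \to \mathcal{BD}(A)$, the underlying BiHom-module morphism $D\to A$ lifts, by the universal property of $\cF$, to a unique Rota-Baxter BiHom-associative morphism $\bar\varphi\colon \cF(D)\to A$; the assumption that $\varphi$ preserves $\prec$ and $\succ$ forces $\bar\varphi$ to annihilate the generators of $\mathcal{I}_D$, and hence all of $\mathcal{I}_D$ because $\bar\varphi$ commutes with $\alpha,\beta,R$ and with multiplication. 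This gives the unique factorization $U_{\mathcal{BD}}(D)\to A$ and hence the adjunction \eqref{eq:HDD}.

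Part (ii) is entirely analogous: replace $\cF$ by the free Rota-Baxter BiHom-associative algebra of weight $\lambda$ and define $\mathcal{I}_D\subseteq \cF(D)$ as the analogous ideal generated by
\begin{equation*}
\eta_D(x\prec y)-\eta_D(x)R_{\cF}(\eta_D(y)),\ \eta_D(x\succ y)-R_{\cF}(\eta_D(x))\eta_D(y),\ \eta_D(x\cdot y)-\lambda\,\eta_D(x)\eta_D(y),
\end{equation*}
for $x,y\in D$; then $U_{\mathcal{BT}}(D):=\cF(D)/\mathcal{I}_D$, and the universal property is checked exactly as before. The main obstacle is the bookkeeping in defining $\mathcal{I}_D$ correctly as a subspace stable under $\alpha,\beta,R$ and two-sided multiplication, so that the quotient inherits all the Rota-Baxter BiHom-associative structure; once that is set up, the verification of the adjunction is formal and does not require the full list of BiHom-tridendriform axioms, since those axioms hold automatically in $\mathcal{BT}$ of any object of $\BiHomRB_\lambda$ by Proposition \ref{BHRB}.
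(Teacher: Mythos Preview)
Your proposal is correct and follows essentially the same strategy the paper points to: the paper's own ``proof'' consists only of the sentence ``The proof is similar to the Hom-type case in \cite{makhloufyau}'', and what you have written is precisely the standard construction one finds there, namely building the left adjoint by taking the free Rota-Baxter BiHom-associative algebra on the underlying BiHom-module (via \thref{freehrb}) and then killing the ideal that forces $\prec,\succ$ (and $\cdot$) to match $xR(y)$, $R(x)y$ (and $\lambda xy$). Your verification that $\mathcal{BD}$ and $\mathcal{BT}$ are functors and your check of the universal property are exactly what is needed, and you correctly note that the only delicate point is ensuring $\mathcal{I}_D$ is closed under $\alpha,\beta,R$ and two-sided multiplication so that the quotient stays in $\BiHomRB_\lambda$.
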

The proof is similar to the Hom-type case in \cite{makhloufyau}.
\section{Weak BiHom-pseudotwistors}\label{sec8}
\setcounter{equation}{0}
Let $(A, \mu )$ be an associative algebra. A weak pseudotwistor for $A$, as defined in \cite{panvan} (extending
the previous proposal from \cite{lpvo} called pseudotwistor)
is a linear
map $T:A\ot A\rightarrow A\ot A$ for which there exists a linear map
$\mathcal{T}:A\otimes A\otimes A \rightarrow A\otimes A\otimes A$ such that
\begin{eqnarray*}
&&T\circ (id_A\otimes (\mu \circ T))=(id_A\otimes \mu )\circ \mathcal{T}, \\
&&T\circ ((\mu \circ T)\otimes id_A)=(\mu \otimes id_A)\circ \mathcal{T}.
\end{eqnarray*}
If this is the case, then $(A, \mu \circ T)$ is also an associative algebra.

If $(A, \mu )$ is an associative algebra and $R:A\rightarrow A$ is a Rota-Baxter operator of weight $\lambda $, then
the linear map
\begin{eqnarray*}
&&T:A\ot A\rightarrow A\ot A, \;\;\;T(a\ot b)=R(a)\ot b+a\ot R(b)+\lambda a\ot b, \;\;\;\forall \;a, b\in A,
\end{eqnarray*}
is a weak pseudotwistor, and consequently we recover the fact that the new multiplication defined on $A$ by
$a*b=R(a)b+aR(b)+\lambda ab$,
for all $a, b\in A$, is associative, see \cite{panvan}.

We want to obtain a BiHom-analogue of this fact. We begin by recalling the following concept and result from \cite{gmmp}:
\begin{proposition}
Let $(D, \mu , \alpha , \beta )$
be a BiHom-associative algebra and $\tilde{\alpha }, \tilde{\beta }:D\rightarrow D$ two
multiplicative linear maps such that any two of the maps $\tilde{\alpha },
\tilde{\beta }, \alpha , \beta $ commute. Let $T:D\otimes D\rightarrow
D\otimes D$ be a linear map and assume that there exist two linear maps $\tilde{%
T}_1, \tilde{T}_2:D\otimes D\otimes D \rightarrow D\otimes D\otimes D$ such that $T$ commutes with
$\alpha \otimes \alpha $, $\beta \otimes \beta $, $\tilde{\alpha }\otimes \tilde{\alpha }$,
$\tilde{\beta }\otimes \tilde{\beta }$ and the following relations hold:
\begin{eqnarray*}
&&T\circ (\alpha \otimes \mu )= (\alpha \otimes \mu )\circ
\tilde{T}_1\circ (T\otimes id_D),  \label{ghompstw1} \\
&&T\circ (\mu \otimes \beta )= (\mu \otimes \beta )\circ
\tilde{T}_2\circ (id_D\otimes T),  \label{ghompstw2} \\
&&\tilde{T}_1\circ (T\otimes id_D)\circ (\tilde{\alpha }\otimes T)= \tilde{T}_2\circ
(id_D\otimes T)\circ (T\otimes \tilde{\beta }).  \label{ghompstw3}
\end{eqnarray*}
Then $D^T_{\tilde{\alpha }, \tilde{\beta }}:=(D, \mu \circ T, \tilde{\alpha }\circ \alpha ,
\tilde{\beta }\circ \beta )$ is also a BiHom-associative algebra. The map $T$
is called an $(\tilde{\alpha }, \tilde{\beta })$-BiHom-pseudotwistor and the two
maps $\tilde{T}_1$, $\tilde{T}_2$ are called the companions of $T$.
In the particular case $\tilde{\alpha }=\tilde{\beta }=id_D$, we call $T$ a
BiHom-pseudotwistor and we denote $D^T_{\tilde{\alpha }, \tilde{\beta }}$ by $D^T$.
\end{proposition}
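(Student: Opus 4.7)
The plan is to verify directly the three axioms that make $D^T_{\tilde{\alpha}, \tilde{\beta}}=(D, \mu \circ T, \tilde{\alpha}\circ \alpha , \tilde{\beta}\circ \beta )$ a BiHom-associative algebra. Commutativity of the two new structure maps, $(\tilde{\alpha}\circ\alpha)\circ(\tilde{\beta}\circ\beta)=(\tilde{\beta}\circ\beta)\circ(\tilde{\alpha}\circ\alpha)$, is immediate from the blanket hypothesis that the four maps $\alpha, \beta, \tilde{\alpha}, \tilde{\beta}$ commute pairwise.

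Next I would check multiplicativity of the new structure maps with respect to $\mu\circ T$. For $\tilde{\alpha}\circ\alpha$, the three ingredients I would combine are: (i) $\alpha$ is multiplicative for $\mu$ because $(D, \mu, \alpha, \beta)$ is a BiHom-associative algebra; (ii) $\tilde{\alpha}$ is multiplicative for $\mu$ by hypothesis; and (iii) $T$ commutes with both $\alpha\otimes\alpha$ and $\tilde{\alpha}\otimes\tilde{\alpha}$. These combine to give $(\tilde{\alpha}\alpha)(\mu(T(x\otimes y)))=\mu(T((\tilde{\alpha}\alpha)(x)\otimes(\tilde{\alpha}\alpha)(y)))$, as required; the argument for $\tilde{\beta}\circ\beta$ is symmetric.

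The heart of the proof is BiHom-associativity for $\mu\circ T$ with the new structure maps. On the left side I would split $\tilde{\alpha}\circ\alpha$ as $\alpha\circ\tilde{\alpha}$ (allowed since the maps commute) in order to rewrite
$$\mu \circ T \circ ((\tilde{\alpha}\circ\alpha) \otimes (\mu \circ T))=\mu \circ T \circ (\alpha \otimes \mu)\circ (\tilde{\alpha}\otimes T),$$
and then invoke the first pseudotwistor relation to obtain
$$\mu \circ (\alpha \otimes \mu)\circ \tilde{T}_1\circ (T\otimes id_D)\circ (\tilde{\alpha}\otimes T).$$
A symmetric manipulation on the right side (splitting $\tilde{\beta}\circ\beta$ as $\beta\circ\tilde{\beta}$ and then applying the second pseudotwistor relation) yields $\mu \circ (\mu \otimes \beta)\circ \tilde{T}_2\circ (id_D\otimes T)\circ (T\otimes \tilde{\beta})$. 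The BiHom-associativity of $\mu$ collapses the outer factors $\mu\circ(\alpha\otimes\mu)=\mu\circ(\mu\otimes\beta)$, and the third compatibility relation between $\tilde{T}_1$ and $\tilde{T}_2$ identifies the remaining inner pieces, closing the equality.

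I expect the only real obstacle to be bookkeeping in the rewriting step: one must carefully factor $\tilde{\alpha}\circ\alpha$ (and $\tilde{\beta}\circ\beta$) in the correct order so that the $\alpha$ (respectively $\beta$) feeds into the slot $\alpha\otimes\mu$ (respectively $\mu\otimes\beta$) required by the first two pseudotwistor axioms, while the other factor $\tilde{\alpha}$ (respectively $\tilde{\beta}$) ends up exactly where the third axiom expects it. Once this ordering is laid out correctly, nothing deeper is needed, and the special case $\tilde{\alpha}=\tilde{\beta}=id_D$ is just the standard BiHom-pseudotwistor check as a corollary.
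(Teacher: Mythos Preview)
Your proposal is correct and follows exactly the natural line of argument. The paper itself does not give a proof of this proposition (it is recalled from \cite{gmmp}), but in the very next result---the weak $(\tilde{\alpha},\tilde{\beta})$-BiHom-pseudotwistor---the paper carries out precisely the computation you describe: rewrite one side using the second relation, use $\mu\circ(\mu\otimes\beta)=\mu\circ(\alpha\otimes\mu)$, and rewrite back using the first relation; in your (non-weak) setting the extra step of invoking the third compatibility relation between $\tilde{T}_1$ and $\tilde{T}_2$ is exactly what is needed, and your bookkeeping of the factorizations $\tilde{\alpha}\circ\alpha=\alpha\circ\tilde{\alpha}$ and $\tilde{\beta}\circ\beta=\beta\circ\tilde{\beta}$ is accurate.
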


We introduce now a common generalization of this concept and of the one of weak pseudotwistor for an
associative algebra:
\begin{proposition}
Let $(D, \mu , \alpha , \beta )$
be a BiHom-associative algebra and $\tilde{\alpha }, \tilde{\beta }:D\rightarrow D$ two
multiplicative linear maps such that any two of the maps $\tilde{\alpha },
\tilde{\beta }, \alpha , \beta $ commute. Let $T:D\otimes D\rightarrow
D\otimes D$ be a linear map and assume that there exist a linear map
$\mathcal{T}:D\otimes D\otimes D \rightarrow D\otimes D\otimes D$ such that $T$ commutes with
$\alpha \otimes \alpha $, $\beta \otimes \beta $, $\tilde{\alpha }\otimes \tilde{\alpha }$,
$\tilde{\beta }\otimes \tilde{\beta }$ and the following relations hold:
\begin{eqnarray}
&&T\circ ((\tilde{\alpha }\circ \alpha )\otimes (\mu \circ T))=(\alpha \otimes \mu )\circ \mathcal{T},
\label{WBHps1}\\
&&T\circ ((\mu \circ T)\otimes (\tilde{\beta }\circ \beta ))=(\mu \otimes \beta )\circ \mathcal{T}.
\label{WBHps2}
\end{eqnarray}
Then $D^T_{\tilde{\alpha }, \tilde{\beta }}:=(D, \mu \circ T, \tilde{\alpha }\circ \alpha ,
\tilde{\beta }\circ \beta )$ is also a BiHom-associative algebra. The map $T$
is called a weak $(\tilde{\alpha }, \tilde{\beta })$-BiHom-pseudotwistor and
$\mathcal{T}$ is called the weak companion of $T$.
In the particular case $\tilde{\alpha }=\tilde{\beta }=id_D$, we call $T$ a weak
BiHom-pseudotwistor and we denote $D^T_{\tilde{\alpha }, \tilde{\beta }}$ by $D^T$.
\end{proposition}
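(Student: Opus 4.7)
The plan is to verify directly the three axioms of a BiHom-associative algebra for $D^T_{\tilde{\alpha},\tilde{\beta}}=(D,\mu\circ T,\tilde{\alpha}\circ \alpha,\tilde{\beta}\circ\beta)$. The commutativity of the new structure maps, $(\tilde{\alpha}\circ\alpha)\circ(\tilde{\beta}\circ\beta)=(\tilde{\beta}\circ\beta)\circ(\tilde{\alpha}\circ\alpha)$, is immediate from the hypothesis that any two of $\alpha,\beta,\tilde{\alpha},\tilde{\beta}$ commute.

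For multiplicativity, I would check that $\tilde{\alpha}\circ\alpha$ is an algebra map for $\mu\circ T$, and analogously for $\tilde{\beta}\circ\beta$. The key observations are that $\tilde{\alpha}$ is multiplicative by hypothesis and $\alpha$ is multiplicative since $(D,\mu,\alpha,\beta)$ is BiHom-associative, so $(\tilde{\alpha}\circ\alpha)\circ\mu=\mu\circ((\tilde{\alpha}\circ\alpha)\otimes(\tilde{\alpha}\circ\alpha))$; moreover, $T$ commutes with $\alpha\otimes\alpha$ and with $\tilde{\alpha}\otimes\tilde{\alpha}$ (again by hypothesis), so $T$ commutes with $(\tilde{\alpha}\circ\alpha)\otimes(\tilde{\alpha}\circ\alpha)$. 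Composing these two facts yields the multiplicativity of $\tilde{\alpha}\circ\alpha$ with respect to $\mu\circ T$. The same argument handles $\tilde{\beta}\circ\beta$.

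The heart of the proof is the BiHom-associativity condition for $\mu\circ T$, namely
\begin{eqnarray*}
(\mu\circ T)\circ \bigl((\tilde{\alpha}\circ\alpha)\otimes(\mu\circ T)\bigr)
&=&(\mu\circ T)\circ \bigl((\mu\circ T)\otimes(\tilde{\beta}\circ\beta)\bigr).
\end{eqnarray*}
Applying (\ref{WBHps1}) to the left-hand side and (\ref{WBHps2}) to the right-hand side, both sides rewrite in terms of $\mathcal{T}$:
\begin{eqnarray*}
\text{LHS} &=& \mu\circ(\alpha\otimes\mu)\circ\mathcal{T}, \\
\text{RHS} &=& \mu\circ(\mu\otimes\beta)\circ\mathcal{T}.
\end{eqnarray*}
Now the BiHom-associativity of the original $(D,\mu,\alpha,\beta)$ says precisely that $\mu\circ(\alpha\otimes\mu)=\mu\circ(\mu\otimes\beta)$ as maps $D\otimes D\otimes D\to D$, so LHS $=$ RHS.

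I expect no real obstacle here: the argument is essentially a bookkeeping exercise that mirrors the weak pseudotwistor proof in \cite{panvan} from the associative case. The slight subtlety, which is the only place one must be careful, is that the two defining relations (\ref{WBHps1})--(\ref{WBHps2}) come with the structure maps $\tilde{\alpha}\circ\alpha$ and $\tilde{\beta}\circ\beta$ built into the input rather than only $\tilde{\alpha},\tilde{\beta}$ as in the (non-weak) BiHom-pseudotwistor case; this is exactly what is needed so that, after eliminating $T$ via these relations, the remaining expressions $\mu\circ(\alpha\otimes\mu)$ and $\mu\circ(\mu\otimes\beta)$ can be identified by the BiHom-associativity of the original multiplication $\mu$.
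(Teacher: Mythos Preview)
Your proof is correct and follows essentially the same approach as the paper: both reduce the BiHom-associativity of $\mu\circ T$ to that of $\mu$ by applying (\ref{WBHps1}) and (\ref{WBHps2}) to rewrite each side as $\mu\circ(\alpha\otimes\mu)\circ\mathcal{T}$ and $\mu\circ(\mu\otimes\beta)\circ\mathcal{T}$, respectively. You additionally spell out the multiplicativity and commutation checks that the paper leaves to the reader, which is fine.
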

\begin{proof}
We only prove the BiHom-associativity condition and leave the rest to the reader:
\begin{eqnarray*}
(\mu \circ T)\circ ((\mu \circ T)\ot (\tilde{\beta }\circ \beta ))&=&\mu \circ T\circ ((\mu \circ T)\ot (\tilde{\beta }\circ \beta ))\\
&\overset{(\ref{WBHps2})}{=}&\mu \circ (\mu \ot \beta )\circ \mathcal{T}
=\mu \circ (\alpha \ot \mu )\circ \mathcal{T}\\
&\overset{(\ref{WBHps1})}{=}&(\mu \circ T)\circ ((\tilde{\alpha }\circ \alpha )\ot (\mu \circ T)),
\end{eqnarray*}
finishing the proof.
\end{proof}
\begin{remark}
If $T$ is an $(\tilde{\alpha }, \tilde{\beta })$-BiHom-pseudotwistor with companions $\tilde{T}_1$,
$\tilde{T}_2$ on a BiHom-associative algebra $(D, \mu , \alpha , \beta )$,
then $T$ is also a weak $(\tilde{\alpha }, \tilde{\beta })$-BiHom-pseudotwistor for $D$, with weak companion
$\mathcal{T}=\tilde{T}_1\circ (T\otimes id_D)\circ (\tilde{\alpha }\otimes T)=
\tilde{T}_2\circ (id_D\otimes T)\circ (T\otimes \tilde{\beta })$.
\end{remark}

We can give now another proof for Corollary \ref{RBBHAA}.
\begin{proposition}
Let $(A, \mu , \alpha , \beta )$ be a BiHom-associative algebra and $R:A\rightarrow A$ a Rota-Baxter operator
of weight $\lambda $ such that $R\circ \alpha =\alpha \circ R$ and $R\circ \beta =\beta \circ R$. Then the
linear map
\begin{eqnarray*}
&&T:A\ot A\rightarrow A\ot A, \;\;\;T(a\ot b)=R(a)\ot b+a\ot R(b)+\lambda a\ot b, \;\;\;\forall \;a, b\in A,
\end{eqnarray*}
is a weak BiHom-pseudotwistor with weak companion $\mathcal{T}:A\otimes A\otimes A
\rightarrow A\otimes A\otimes A$,
\begin{eqnarray*}
&&\mathcal{T}(a\ot b\ot c)=R(a)\ot R(b)\ot c+R(a)\ot b\ot R(c)
+a\ot R(b)\ot R(c)+\lambda R(a)\ot b\ot c\\
&&\;\;\;\;\;\;\;\;\;\;\;\;\;\;\;\;\;\;\;\;\;\;\;\;\;\;\;+\lambda a\ot R(b)\ot c+\lambda a\ot b\ot R(c)
+\lambda ^2a\ot b\ot c,
\end{eqnarray*}
and the new BiHom-associative multiplication $\mu \circ T$ on $A$ is given by
$a*b=R(a)b+aR(b)+\lambda ab$.
\end{proposition}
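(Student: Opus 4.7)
The plan is to check the three requirements in the definition of a weak BiHom-pseudotwistor in turn: the commutation conditions, the two pseudotwistor identities (\ref{WBHps1}) and (\ref{WBHps2}), and finally to confirm that the induced multiplication $\mu\circ T$ reproduces the formula for $*$. The conclusion that $(A,\mu\circ T,\alpha,\beta)$ is BiHom-associative, and therefore coincides with the algebra of Corollary \ref{RBBHAA}, will then be immediate from the preceding proposition applied with $\tilde{\alpha}=\tilde{\beta}=\mathrm{id}_A$.

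First I would observe that, since $R$ commutes with $\alpha$ and $\beta$, the map $T$ commutes with $\alpha\otimes\alpha$ and $\beta\otimes\beta$ (commutation with $\tilde{\alpha}\otimes\tilde{\alpha}=\mathrm{id}\otimes\mathrm{id}$ and $\tilde{\beta}\otimes\tilde{\beta}=\mathrm{id}\otimes\mathrm{id}$ is automatic). The identification $(\mu\circ T)(a\otimes b)=R(a)b+aR(b)+\lambda ab$ is then read off directly from the definitions of $\mu$ and $T$.

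The main step is verifying (\ref{WBHps1}), that is, $T\circ(\alpha\otimes(\mu\circ T))=(\alpha\otimes\mu)\circ\mathcal{T}$. Applied to $a\otimes b\otimes c$, the left-hand side equals
\begin{eqnarray*}
T\bigl(\alpha(a)\otimes(R(b)c+bR(c)+\lambda bc)\bigr)
&=& R(\alpha(a))\otimes(R(b)c+bR(c)+\lambda bc) \\
&& +\;\alpha(a)\otimes R(R(b)c+bR(c)+\lambda bc) \\
&& +\;\lambda\alpha(a)\otimes(R(b)c+bR(c)+\lambda bc).
\end{eqnarray*}
Using $R\circ\alpha=\alpha\circ R$ together with the Rota-Baxter identity (\ref{RBrel}) to replace $R(R(b)c+bR(c)+\lambda bc)$ by $R(b)R(c)$, one obtains exactly the seven terms of $(\alpha\otimes\mu)\circ\mathcal{T}(a\otimes b\otimes c)$. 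The relation (\ref{WBHps2}) is verified symmetrically, by applying $T\circ((\mu\circ T)\otimes\beta)$ to $a\otimes b\otimes c$, expanding, using $R\circ\beta=\beta\circ R$, and again invoking (\ref{RBrel}) on the inner factor. The only mild bookkeeping point is to match the seven monomials produced by each expansion with the seven summands of $\mathcal{T}$; no further ingenuity is required.

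The hard part, as far as there is one, is purely notational: organizing the expansions so that the Rota-Baxter substitution $R(b)R(c)=R(R(b)c+bR(c)+\lambda bc)$ is applied precisely once in each identity and the remaining terms line up with $\mathcal{T}$. Once (\ref{WBHps1}) and (\ref{WBHps2}) are established, the previous proposition gives directly that $(A,\mu\circ T,\alpha,\beta)$ is a BiHom-associative algebra, which is the statement recorded in Corollary \ref{RBBHAA}.
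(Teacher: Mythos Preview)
Your proof is correct and follows essentially the same approach as the paper's: the paper also checks only (\ref{WBHps1}) by direct expansion, applies the Rota-Baxter identity (\ref{RBrel}) once to collapse the three $R(\cdots)$-terms into $R(b)R(c)$, uses $R\circ\alpha=\alpha\circ R$, and matches the resulting seven summands with $(\alpha\otimes\mu)\circ\mathcal{T}$, leaving the rest to the reader.
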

\begin{proof}
We only prove that $T\circ (\alpha \otimes (\mu \circ T))=(\alpha \otimes \mu )\circ \mathcal{T}$ and leave the rest
to the reader:\\[2mm]
${\;\;\;}$
$T\circ (\alpha \ot (\mu \circ T))(a\ot b\ot c)$
\begin{eqnarray*}
&=&R(\alpha (a))\ot R(b)c+\alpha (a)\ot R(R(b)c)+\lambda \alpha (a)\ot R(b)c\\
&&+R(\alpha (a))\ot bR(c)+\alpha (a)\ot R(bR(c))+\lambda \alpha (a)\ot bR(c)\\
&&+\lambda R(\alpha (a))\ot bc+\lambda \alpha (a)\ot R(bc)+\lambda ^2\alpha (a)\ot bc\\
&=&R(\alpha (a))\ot R(b)c+\lambda \alpha (a)\ot R(b)c+R(\alpha (a))\ot bR(c)+\lambda \alpha (a)\ot bR(c)\\
&&+\lambda R(\alpha (a))\ot bc+\lambda ^2\alpha (a)\ot bc+\alpha (a)\ot R(R(b)c+bR(c)+\lambda bc)\\
&\overset{(\ref{RBrel})}{=}&R(\alpha (a))\ot R(b)c+\lambda \alpha (a)\ot R(b)c+R(\alpha (a))\ot bR(c)+
\lambda \alpha (a)\ot bR(c)\\
&&+\lambda R(\alpha (a))\ot bc+\lambda ^2\alpha (a)\ot bc+\alpha (a)\ot R(b)R(c)\\
&=&\alpha (R(a))\ot R(b)c+\lambda \alpha (a)\ot R(b)c+\alpha (R(a))\ot bR(c)+
\lambda \alpha (a)\ot bR(c)\\
&&+\lambda \alpha (R(a))\ot bc+\lambda ^2\alpha (a)\ot bc+\alpha (a)\ot R(b)R(c)\\
&=&(\alpha \ot \mu )\circ \mathcal{T}(a\ot b\ot c),
\end{eqnarray*}
finishing the proof.
\end{proof}

The next two results are BiHom-analogues of some results in \cite{panvan}.
\begin{proposition}
Let $(A, \mu , \alpha , \beta )$ be a BiHom-associative algebra
and $T, D:A\ot A\rightarrow A\ot A$ two weak BiHom-pseudotwistors for $A$, with weak
companions $\mathcal{T}$ and respectively $\mathcal{D}$, such that the following conditions are satisfied:
\begin{eqnarray}
&&D\circ (\alpha \ot (\mu \circ T\circ D))=(\alpha \ot (\mu \circ T))\circ \mathcal{D}, \label{rel1DT}\\
&&D\circ ((\mu \circ T\circ D)\ot \beta )=((\mu \circ T)\ot \beta )\circ \mathcal{D}. \label{rel2DT}
\end{eqnarray}
Then $T\circ D$ is a weak BiHom-pseudotwistor for $A$, with weak companion $\mathcal{T}\circ \mathcal{D}$.
\end{proposition}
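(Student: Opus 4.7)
The plan is to verify directly that $T\circ D$ satisfies the two defining relations (\ref{WBHps1}) and (\ref{WBHps2}) of a weak BiHom-pseudotwistor (in the case $\tilde{\alpha }=\tilde{\beta }=id_A$) with weak companion $\mathcal{T}\circ \mathcal{D}$, while the required commutation properties will follow for free from the corresponding properties of $T$ and $D$ separately.

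More precisely, I would first observe that since $T$ and $D$ each commute with $\alpha \ot \alpha $ and $\beta \ot \beta $, so does $T\circ D$, which takes care of the ``multiplicativity'' conditions in the definition of a weak BiHom-pseudotwistor. The heart of the argument is the chain
\begin{eqnarray*}
(T\circ D)\circ (\alpha \ot (\mu \circ T\circ D))&=&T\circ [D\circ (\alpha \ot (\mu \circ T\circ D))]\\
&\overset{(\ref{rel1DT})}{=}&T\circ (\alpha \ot (\mu \circ T))\circ \mathcal{D}\\
&=&[T\circ (\alpha \ot (\mu \circ T))]\circ \mathcal{D}\\
&\overset{(\ref{WBHps1})\text{ for }T}{=}&(\alpha \ot \mu )\circ \mathcal{T}\circ \mathcal{D}\\
&=&(\alpha \ot \mu )\circ (\mathcal{T}\circ \mathcal{D}),
\end{eqnarray*}
which verifies (\ref{WBHps1}) for $T\circ D$ with weak companion $\mathcal{T}\circ \mathcal{D}$. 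The symmetric computation, using (\ref{rel2DT}) in place of (\ref{rel1DT}) and (\ref{WBHps2}) for $T$ in place of (\ref{WBHps1}), yields (\ref{WBHps2}) for $T\circ D$.

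The argument is almost entirely formal: the hypotheses (\ref{rel1DT}) and (\ref{rel2DT}) are precisely what is needed to ``peel off'' the outer $D$ so that the weak BiHom-pseudotwistor axioms for $T$ can be applied. There is no real obstacle; the only point requiring attention is getting the parenthesisation right in expressions like $\alpha \ot (\mu \circ T\circ D)$ and making sure at each step that the map being applied to a given tensor slot is in the correct position. Once the two key identities are verified, the fact that $A$ endowed with the multiplication $\mu \circ T\circ D$ is a BiHom-associative algebra follows automatically from the previous proposition applied to the weak BiHom-pseudotwistor $T\circ D$.
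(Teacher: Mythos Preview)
Your proposal is correct and matches the paper's own proof essentially line for line: the paper also notes that $T\circ D$ commutes with $\alpha\otimes\alpha$ and $\beta\otimes\beta$, then applies (\ref{rel1DT}) followed by (\ref{WBHps1}) for $T$ to get the first identity, and similarly (\ref{rel2DT}) followed by (\ref{WBHps2}) for the second.
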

\begin{proof}
Obviously $T\circ D$ commutes with $\alpha \ot \alpha $ and $\beta \ot \beta $.
Now we compute:
\begin{eqnarray*}
T\circ D\circ (\alpha \ot (\mu \circ T\circ D))&\overset{(\ref{rel1DT})}{=}&
T\circ (\alpha \ot (\mu \circ T))\circ \mathcal{D}\\
&\overset{(\ref{WBHps1})}{=}&(\alpha \ot \mu )\circ \mathcal{T}\circ \mathcal{D},
\end{eqnarray*}
\begin{eqnarray*}
T\circ D\circ ((\mu \circ T\circ D)\ot \beta )&\overset{(\ref{rel2DT})}{=}&
T\circ ((\mu \circ T)\ot \beta )\circ \mathcal{D}\\
&\overset{(\ref{WBHps2})}{=}&(\mu \ot \beta )\circ \mathcal{T}\circ \mathcal{D},
\end{eqnarray*}
finishing the proof.
\end{proof}
\begin{corollary}
Let $(A, \mu , \alpha , \beta )$ be a BiHom-associative algebra
and $T, D:A\ot A\rightarrow A\ot A$ two weak BiHom-pseudotwistors for $A$, with weak
companions $\mathcal{T}$ and respectively $\mathcal{D}$, such that the following conditions are satisfied:
\begin{eqnarray}
&&\mu \circ T\circ D=\mu \circ D\circ T, \label{correl1TD}\\
&&\mathcal{D}\circ (id_A\ot T)=(id_A\ot T)\circ \mathcal{D}, \label{correl2TD}\\
&&\mathcal{D}\circ (T\ot id_A)=(T\ot id_A)\circ \mathcal{D}. \label{correl3TD}
\end{eqnarray}
Then $T\circ D$ is a weak BiHom-pseudotwistor for $A$, with weak companion $\mathcal{T}\circ \mathcal{D}$.
\end{corollary}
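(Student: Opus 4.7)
The plan is to reduce this Corollary to the Proposition immediately preceding it: to conclude that $T\circ D$ is a weak BiHom-pseudotwistor with weak companion $\mathcal{T}\circ \mathcal{D}$, it suffices to verify the two identities (\ref{rel1DT}) and (\ref{rel2DT}) (with $T$ and $D$ playing the indicated roles and $\mathcal{D}$ being the companion of $D$). The commutation of $T\circ D$ with $\alpha\otimes \alpha$ and $\beta\otimes \beta$ is immediate from the corresponding commutations for $T$ and $D$ individually, so everything comes down to checking (\ref{rel1DT}) and (\ref{rel2DT}) from the hypotheses (\ref{correl1TD})--(\ref{correl3TD}) of the Corollary.

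For (\ref{rel1DT}), I would start from $D\circ (\alpha \otimes (\mu\circ T\circ D))$, use (\ref{correl1TD}) to rewrite the inner $\mu\circ T\circ D$ as $\mu\circ D\circ T$, and then factor the map on $A\otimes A\otimes A$ as
\[
D\circ (\alpha \otimes (\mu\circ D\circ T)) \;=\; D\circ (\alpha\otimes (\mu\circ D))\circ (id_A\otimes T),
\]
since $T$ acts only on the last two tensor slots. Applying the weak BiHom-pseudotwistor axiom (\ref{WBHps1}) for $D$ replaces the first two factors by $(\alpha\otimes \mu)\circ \mathcal{D}$, and then (\ref{correl2TD}) lets me pass $id_A\otimes T$ through $\mathcal{D}$, yielding
\[
(\alpha\otimes \mu)\circ (id_A\otimes T)\circ \mathcal{D} \;=\; (\alpha\otimes (\mu\circ T))\circ \mathcal{D},
\]
which is exactly the right-hand side of (\ref{rel1DT}).

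Identity (\ref{rel2DT}) will follow by the mirror-image argument: use (\ref{correl1TD}) to swap $T\circ D$ for $D\circ T$ inside, factor out $T\otimes id_A$ on the left, apply (\ref{WBHps2}) for $D$, and finish with (\ref{correl3TD}) to move $T\otimes id_A$ past $\mathcal{D}$. Once both identities are in hand, the previous Proposition immediately delivers the conclusion, with the weak companion being the composite $\mathcal{T}\circ \mathcal{D}$ obtained there. I do not foresee any substantive obstacle; the only thing to watch is the order in which the hypotheses are applied so that $T$ and $\mathcal{D}$ are safely commuted past one another in the appropriate tensor slot.
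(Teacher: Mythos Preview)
Your proposal is correct and follows essentially the same route as the paper's proof: verify (\ref{rel1DT}) via the chain (\ref{correl1TD}) $\to$ factor out $id_A\otimes T$ $\to$ (\ref{WBHps1}) for $D$ $\to$ (\ref{correl2TD}), then argue (\ref{rel2DT}) by the mirror computation using (\ref{correl3TD}), and invoke the preceding Proposition. No gaps.
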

\begin{proof}
We check (\ref{rel1DT}), while (\ref{rel2DT}) is similar and left to the reader:
\begin{eqnarray*}
D\circ (\alpha \ot (\mu \circ T\circ D))&\overset{(\ref{correl1TD})}{=}&
D\circ (\alpha \ot (\mu \circ D\circ T))\\
&=&D\circ (\alpha \ot (\mu \circ D))\circ (id_A\ot T)\\
&\overset{(\ref{WBHps1})}{=}&(\alpha \ot \mu )\circ \mathcal{D}\circ (id_A\ot T)\\
&\overset{(\ref{correl2TD})}{=}&(\alpha \ot \mu )\circ (id_A\ot T)\circ \mathcal{D}\\
&=&(\alpha \ot (\mu \circ T))\circ \mathcal{D},
\end{eqnarray*}
finishing the proof.
\end{proof}

We can give now another proof for the second statement in Corollary \ref{corpairRB}. 
\begin{corollary}
Let $(A, \mu , \alpha , \beta )$ be a BiHom-associative algebra and $R, P:A\rightarrow A$ two commuting
Rota-Baxter operators of weight $0$ such that $R\circ \alpha =\alpha \circ R$, $R\circ \beta =\beta \circ R$,
$P\circ \alpha =\alpha \circ P$ and $P\circ \beta =\beta \circ P$. Define a new multiplication on $A$ by
\begin{eqnarray}
&&a*b=R(P(a))b+R (a)P (b)+P(a)R(b)+aR(P(b)), \;\;\;
\forall \;a, b\in A.  \label{m2RB}
\end{eqnarray}
Then $(A, *, \alpha , \beta )$ is a BiHom-associative algebra.
\end{corollary}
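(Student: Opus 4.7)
The plan is to realize the new multiplication $*$ as $\mu \circ (T \circ D)$, where $T$ and $D$ are the weak BiHom-pseudotwistors of weight $0$ attached to $R$ and $P$ respectively by the preceding proposition, and then to deduce that $T \circ D$ is itself a weak BiHom-pseudotwistor by applying the preceding corollary.

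First, define linear maps $T, D : A \ot A \rightarrow A \ot A$ by
\begin{eqnarray*}
&& T(a \ot b) = R(a) \ot b + a \ot R(b), \\
&& D(a \ot b) = P(a) \ot b + a \ot P(b),
\end{eqnarray*}
for all $a, b \in A$. By the preceding proposition applied in the weight $0$ case to $R$ (respectively $P$), $T$ (respectively $D$) is a weak BiHom-pseudotwistor for $A$ with weak companion
\begin{eqnarray*}
&& \mathcal{T}(a \ot b \ot c) = R(a) \ot R(b) \ot c + R(a) \ot b \ot R(c) + a \ot R(b) \ot R(c), \\
&& \mathcal{D}(a \ot b \ot c) = P(a) \ot P(b) \ot c + P(a) \ot b \ot P(c) + a \ot P(b) \ot P(c).
\end{eqnarray*}
A direct computation using $R \circ P = P \circ R$ gives
\begin{eqnarray*}
\mu \circ T \circ D(a \ot b) &=& R(P(a))b + P(a)R(b) + R(a)P(b) + aR(P(b)) = a * b,
\end{eqnarray*}
so it suffices to establish that $T \circ D$ is a weak BiHom-pseudotwistor, whereupon the preceding proposition yields that $(A, \mu \circ T \circ D, \alpha, \beta) = (A, *, \alpha, \beta)$ is a BiHom-associative algebra.

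For this, I shall verify the three hypotheses of the preceding corollary. The first condition $\mu \circ T \circ D = \mu \circ D \circ T$ is immediate from the commutation $R \circ P = P \circ R$, since both sides equal the formula for $a * b$ above (the roles of $R$ and $P$ being interchangeable). The remaining two conditions $\mathcal{D} \circ (id_A \ot T) = (id_A \ot T) \circ \mathcal{D}$ and $\mathcal{D} \circ (T \ot id_A) = (T \ot id_A) \circ \mathcal{D}$ are checked by expanding both sides on a simple tensor $a \ot b \ot c$; each side produces a sum of six tensors in which the composite operators $R \circ P$ and $P \circ R$ occur, and the identity $R \circ P = P \circ R$ matches the two sides term by term. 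The commutation of $R$ and $P$ with $\alpha, \beta$ is also used to confirm that $T \circ D$ commutes with $\alpha \ot \alpha$ and $\beta \ot \beta$, as required in the definition of a weak BiHom-pseudotwistor.

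The main obstacle is the verification of the second and third conditions of the preceding corollary, but this is only bookkeeping: the commutation $R \circ P = P \circ R$ provides the key identification between the composites appearing on the two sides. Once these are established, the preceding corollary gives that $T \circ D$ is a weak BiHom-pseudotwistor for $A$ with weak companion $\mathcal{T} \circ \mathcal{D}$, and hence the induced multiplication $\mu \circ T \circ D = *$ equips $(A, \alpha, \beta)$ with the structure of a BiHom-associative algebra.
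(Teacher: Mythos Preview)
Your proof is correct and follows essentially the same approach as the paper's: both define the weak BiHom-pseudotwistors $T$ and $D$ from $R$ and $P$, verify the three hypotheses of the preceding corollary using $R\circ P = P\circ R$, and identify $\mu\circ T\circ D$ with the multiplication $*$. The only cosmetic difference is that the paper notes the slightly stronger fact $T\circ D = D\circ T$ (which immediately gives the first hypothesis), whereas you verify $\mu\circ T\circ D = \mu\circ D\circ T$ directly; both are immediate from the commutation of $R$ and $P$.
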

\begin{proof}
We consider the
weak BiHom-pseudotwistors $T, D:A\ot A\rightarrow A\ot A$,
\begin{eqnarray*}
&&T(a\ot b)=R(a)\ot b+a\ot R(b), \;\;\;D(a\ot b)=P(a)\ot b+a\ot P(b),
\end{eqnarray*}
with weak companions $\mathcal{T}$ and respectively $\mathcal{D}$ defined by
\begin{eqnarray*}
&&\mathcal{T}(a\ot b\ot c)=R(a)\ot R(b)\ot c+R(a)\ot b\ot R(c)+
a\ot R(b)\ot R(c), \\
&&\mathcal{D}(a\ot b\ot c)=P(a)\ot P(b)\ot c+P(a)\ot b\ot P(c)+
a\ot P(b)\ot P(c).
\end{eqnarray*}
Since $R$ and $P$ commute, it is clear that $T\circ D=D\circ T$, so (\ref{correl1TD}) is
satisfied. One can easily check that (\ref{correl2TD}) and (\ref{correl3TD}) are
also satisfied, so $T\circ D$ is a weak BiHom-pseudotwistor and clearly
$\mu \circ T\circ D$ is exactly the multiplication (\ref{m2RB}).
\end{proof}


\begin{center}
ACKNOWLEDGEMENTS
\end{center}

This paper was written while Ling Liu was visiting the Institute of Mathematics of the Romanian Academy (IMAR), 
supported by the NSF of China (Grant Nos. 11601486, 11401534); she would like to thank IMAR for its warm hospitality. 
Claudia Menini was a  member of the National Group for Algebraic
and Geometric Structures, and their Applications (GNSAGA-INdAM).

\bibliographystyle{amsplain}

\begin{thebibliography}{99}

\bibitem{AizawaSaito} N. Aizawa, H. Sato, \emph{$q$-deformation of the
Virasoro algebra with central extension}, Phys. Lett. B \textbf{256} (1991), 
185--190.

\bibitem{agui}
M. Aguiar, J.-L. Loday, \emph{Quadri-algebras}, J. Pure Appl. 
Algebra \textbf{191} (2004), 205--211.

\bibitem{an}
H. -H. An, C. Xue, J. Kang, \emph{Hom-quadri-algebras and Hom-octo-algebras}, 
J. Dalian Univ. \textbf{35} (2014), 16--23.  

\bibitem{Bai O-operators}
 C. Bai,
 {\it $\mathcal{O}$-operators of Loday algebras and analogues of the classical Yang-Baxter equation},
Comm. Algebra \textbf{38} (2010), 4277--4321.

\bibitem{Bai O-operators2}
 C. Bai, L. Guo, X. Ni, 
 {\it $\mathcal{O}$-operators on associative algebras and associative Yang-Baxter equations.},
Pacific J. Math.
 \textbf{256} (2012), 257--289.

\bibitem{nankai}
C. Bai, L. Guo, X. Ni, \emph{O-operators on associative algebras, associative Yang-Baxter equations and dendriform algebras}, 
In ''Quantized algebra and physics'', volume 8
of Nankai Ser. Pure Appl. Math. Theoret. Phys., pages 10–-51, World Sci. Publ.,
Hackensack, NJ, 2012.

\bibitem{Baxter} 
G. Baxter, \emph{An analytic problem whose solution follows from a simple algebraic identity}, Pacific J. Math. \textbf{10} (1960),  731--742.

\bibitem{Drinfeld}  
A. A. Belavin, V. G. Drinfeld, \emph{Solutions of the classical Yang-Baxter equation for simple Lie algebras,} Funct. Anal. Appl. \textbf{16} (1982), 159--180.

\bibitem{stef} S. Caenepeel, I. Goyvaerts, \emph{Monoidal Hom-Hopf algebras}%
, Comm. Algebra \textbf{39} (2011), 2216--2240.

\bibitem{Connes-Kreimer}
A. Connes, D. Kreimer,
 {\it Hopf algebras, renormalization and noncommutative geometry},
 Comm. Math. Phys.
 {\bf  199} (1998), 203--242.

\bibitem{KEF1} 
K. Ebrahimi-Fard,  \emph{Loday-type algebras and the Rota-Baxter relation}, Lett. Math. Phys. \textbf{61 }(2002), 139--147.

\bibitem{KEF-Guo1} 
K. Ebrahimi-Fard, L. Guo,  \emph{Free Rota-Baxter algebras and rooted trees}, J. Algebra Appl. \textbf{7} (2008), 1--28.

\bibitem{KEF-Guo2} 
K. Ebrahimi-Fard, L. Guo, \emph{ Rota-Baxter algebras and dendriform algebras}, J. Pure Appl. Algebra 
\textbf{212} (2008), 320--339.

 \bibitem{KEF-Guo_2007} 
K. Ebrahimi-Fard, L. Guo, \emph{Rota-Baxter algebras in renormalization of perturbative quantum field theory},
Fields Institute Communications \textbf{50} (2007), 47-105.

\bibitem{KEF-Manchon2009}
K. Ebrahimi-Fard, D. Manchon, \emph{Twisted dendriform algebras and the pre-Lie Magnus expansion}, 
J. Pure Appl. Algebra {\bf 215} (2011), 2615--2627. 

\bibitem{KEF-Bondia-Patras} 
K. Ebrahimi-Fard, J. M. Gracia-Bondia, F. Patras,  \emph{Rota-Baxter algebras and new combinatorial identities}, Lett. Math. Phys. \textbf{81 }(2007), 61--75.

\bibitem{RF}
R. Felipe, \emph{Dendriform algebras and Rota-Baxter operators revisited in several directions},  Comunicaciones del CIMAT. 

\bibitem{gmmp} G. Graziani, A. Makhlouf, C. Menini, F. Panaite, \emph{BiHom-associative algebras, BiHom-Lie algebras
and BiHom-bialgebras}, Symmetry Integrability Geom. Methods Appl. (SIGMA) \textbf{11} (2015), 086, 34 pages.

\bibitem{Guo} 
L. Guo, \emph{An introduction to Rota-Baxter algebra}, Surveys of Modern Mathematics, 4. International Press, Somerville, MA; Higher Education Press, Beijing, 2012.

\bibitem{JDS}
J. T. Hartwig, D. Larsson, S. D. Silvestrov, \emph{Deformations of Lie algebras using $\sigma$-derivations}, J.
Algebra \textbf{295} (2006), 314--361.

\bibitem{Hu} N. Hu, \emph{$q$-Witt algebras, $q$-Lie algebras, $q$-holomorph
structure and representations}, Algebra Colloq. \textbf{6} (1999), 51--70.

\bibitem{DS}
D. Larsson, S. D. Silvestrov, \emph{Quasi-hom-Lie algebras, central extensions and $2$-cocycle-like identities}, 
J. Algebra \textbf{288} (2005), 321--344.

\bibitem{CLG-M-T}
C. Laurent-Gengoux,  A. Makhlouf,  J. Teles, 
\emph{Universal algebra of a Hom-Lie algebra and group-like elements},
to appear in J. Pure Appl. Algebra (2017).

\bibitem{loday}
J.-L. Loday, \emph{Dialgebras}, in ''Dialgebras and other operads'',
Lecture Notes in Mathematics, \textbf{1763}, Springer, Berlin, 2001, 7--66.

\bibitem{lodayronco}
J.-L. Loday, M. Ronco, \emph{Trialgebras and families of polytopes}, in ''Homotopy theory: relations with
algebraic geometry, group cohomology and algebraic K-theory'', Contemp. Math. \textbf{346}, Amer. Math. Soc.,
Providence, RI, 2004, 369--398.

\bibitem{lpvo}
J. L\'{o}pez Pe\~{n}a, F. Panaite, F. Van Oystaeyen, \emph{General twisting of algebras},
Adv. Math. {\bf 212} (2007), 315--337.

\bibitem{madariaga}
S. Madariaga, \emph{Splitting of operations for alternative and Malcev structures}, Comm. Algebra {\bf 45} (2017), 
183--197. 

\bibitem{makhloufrotabaxter}
A. Makhlouf, \emph{Hom-dendriform algebras and Rota-Baxter Hom-algebras},
in ''Operads and universal algebra'', Nankai Ser. Pure Appl. Math. Theoret. Phys., 9, World Sci. Publ., Hackensack, NJ, 2012,
147--171.

\bibitem{makpan}
A. Makhlouf, F. Panaite, \emph{Hom-L-R-smash products, Hom-diagonal crossed products and the
Drinfeld double of a Hom-Hopf algebra},
J. Algebra {\bf 441} (2015), 314--343.

\bibitem{ms}
A. Makhlouf, S. D.  Silvestrov, \emph{Hom-algebras structures}, J. Gen. Lie Theory Appl. \textbf{2} (2008), 51--64.

\bibitem{makhloufyau}
A. Makhlouf, D. Yau, \emph{Rota-Baxter Hom-Lie-admissible algebras}, Comm. Algebra \textbf{42} (2014), 1231--1257.

\bibitem{martini}
L. Martini, \emph{Enveloping algebras of BiHom-Lie algebras}, Thesis, Ferrara University, 2015.

\bibitem{panvan}
F. Panaite, F. Van Oystaeyen, \emph{Twisted algebras and Rota-Baxter type operators}, J. Algebra Appl. 
\textbf{16} (2017), 1750079 (18 pages). 

\bibitem{Rota} 
G. C. Rota, \emph{Baxter algebras and combinatorial identities}, I, II, Bull. Amer. Math. Soc. \textbf{75} (1969), 325--329;  Bull. Amer. Math. Soc. \textbf{75} (1969), 330--334.

\bibitem{Rota2}  G. C. Rota, \emph{Baxter operators, an introduction},  In ''Gian-Carlo Rota on combinatorics'', Contemp. Math., 504--512. Birkhauser Boston, Boston, MA, 1995.

\bibitem{semenov}
M. A. Semenov-Tian-Shansky, {\it What is a classical r-matrix?},  Funct. Anal. Appl
 {\bf  17} (1983), 259--272.

\bibitem{YauEnv} D. Yau, \emph{Enveloping algebras of Hom-Lie algebras}, J. Gen.
Lie Theory Appl. \textbf{2} (2008), 95--108.


\end{thebibliography}
\providecommand{\bysame}{\leavevmode\hbox to3em{\hrulefill}\thinspace}
\providecommand{\MR}{\relax\ifhmode\unskip\space\fi MR }
\providecommand{\MRhref}[2]{%
  \href{http://www.ams.org/mathscinet-getitem?mr=#1}{#2}
}
\providecommand{\href}[2]{#2}

\end{document}